\newcolumntype{L}{>{$}l<{$}}
\theoremstyle{definition}
\newtheorem{definition}{Definition} 
\newtheorem{remark}{Remark} 
\newtheorem{theorem}{Theorem} 
\newtheorem{corollary}{Corollary}
\newcommand{\no}[1]{\widebar{#1}\:\!}
\def\F{\mathcal F}
\def\P{\mathcal P}
\def\M{\mathcal M}
\def\I{\mathcal I}
\def\H{\mathcal H}
\def\pr{\mathbb{P}}
\def\prev{\mathbb{P}}
\def\G{\mathcal{G}}
\def\C{\mathscr{C}}
\def\D{\mathscr{D}}
\def\V{\mathcal{V}}
\newtheorem{example}{Example}
\journal{International Journal of Approximate Reasoning}
\begin{document}

\begin{frontmatter}

\title{Compound conditionals,  Fr\'echet-Hoeffding bounds,  and  Frank t-norms
 \tnoteref{mytitlenote}
}

\author[ag]{Angelo Gilio\fnref{fn1,fn2}}
\address[ag]{Department of Basic and Applied Sciences for Engineering, University of Rome ``La Sapienza'', Via A. Scarpa 14, 00161 Roma, Italy}
\ead{angelo.gilio@sbai.uniroma1.it}

\cortext[cor1]{Corresponding author}

\author[gs]{Giuseppe Sanfilippo\corref{cor1}\fnref{fn1}\fnref{fn3}}
\address[gs]{Department of Mathematics and Computer Science, Via Archirafi 34, 90123 Palermo, Italy}
\ead{giuseppe.sanfilippo@unipa.it}

\fntext[fn1]{Both authors  equally contributed to this work}
\fntext[fn2]{Retired}
\fntext[fn3]{
Also affiliated with INdAM-GNAMPA, Italy}

\begin{abstract}
In this paper we consider compound conditionals,	Fr\'echet-Hoeffding bounds and the probabilistic interpretation of Frank t-norms. By studying the solvability of   suitable linear systems, we  show under logical independence the sharpness of the Fr\'echet-Hoeffding bounds for  the prevision of  conjunctions and  disjunctions of $n$ conditional events.
In addition, we illustrate some details in the case of three conditional events.
 We  study  the set of all coherent prevision assessments on a family containing $n$ conditional events and their conjunction,  by verifying that it is convex. We discuss  the case where  the prevision of  conjunctions is assessed by  Lukasiewicz t-norms and we give  explicit solutions for the  linear systems; then, we analyze a selected example.
 We obtain a probabilistic interpretation of   Frank t-norms and t-conorms as   prevision of conjunctions and disjunctions of conditional events, respectively. Then,  we characterize the sets of coherent prevision assessments on 
a family containing $n$ conditional events and their conjunction, or their disjunction, by using Frank t-norms, or Frank t-conorms.
By assuming logical independence, we show that any Frank t-norm  (resp., t-conorm) of two conditional events $A|H$ and $B|K$, $T_{\lambda}(A|H,B|K)$ (resp., $S_{\lambda}(A|H,B|K)$), is a conjunction $(A|H)\wedge (B|K)$ (resp., a disjunction $(A|H)\vee  (B|K)$). Then, we analyze the case of logical dependence where $A=B$ and we  obtain the set of coherent assessments on ${A|H,A|K,(A|H)\wedge (A|K)}$; moreover we represent it  in terms of the class of Frank t-norms $T_{\lambda}$, with $\lambda\in[0,1]$.
By considering a family $\mathcal{F}$ containing three conditional events, their conjunction,  and all pairwise conjunctions,
we give some   results on Frank t-norms and coherence of the prevision assessments on $\mathcal{F}$.
By assuming  logical independence, we show that it is coherent to assess the previsions of all the conjunctions by means of Minimum and Product t-norms. In this case all the conjunctions coincide with  the t-norms of the corresponding conditional events. We verify  by  a counterexample that, when the previsions of conjunctions are assessed by the Lukasiewicz t-norm,   coherence is not assured. Then,  the Lukasiewicz t-norm of  conditional events may not be interpreted as their  conjunction. Finally, we give two sufficient conditions  for coherence and incoherence   when using the Lukasiewicz t-norm.  	
\end{abstract}
\begin{keyword}
Coherence\sep Conditional previsions \sep Convexity \sep 
 Conjunction and disjunction  
\sep Fr\'echet-Hoeffding bounds
\sep Frank t-norms
\end{keyword}
\end{frontmatter}
  \section{Introduction}
In this paper we consider  conjunctions and disjunctions of conditional events. These compound conditionals are defined in the setting of coherence as  suitable conditional random quantities, with values in the unit interval (see, e.g. \cite{GiPS20,GiSa13c,GiSa13a,GiSa14,GiSa19,GiSa20,SGOP20,SPOG18}). 
In \cite{GiSa14} we proved the sharpness of the Fr\'echet-Hoeffding bounds for the prevision of the conjunction and disjunction of two conditional events. We recall that such lower and upper bounds are particular Frank t-norms and t-conorms:
for the conjunction they are the Lukasiewicz   and  Minimum t-norms, respectively; for the disjunction they  are the dual t-conorms.
 In this paper we generalize this result to the conjunction $\C_{1\cdots n}$ and the disjunction 
$\D_{1\cdots n}$ of  $n$ conditional events $E_1|H_1,\ldots, E_n|H_n$.
To obtain this result we study the solvability of suitable linear systems associated with  prevision assessments $\M$ on the family
$\{E_1|H_1,\ldots, E_n|H_n,\C_{1\cdots n}\}$. We  provide some explicit solutions for the linear systems and we  show that the set of coherent assessments $\M$ is convex. To better illustrate our results, we examine more details in the case of three conditional events.  

We discuss  the case where  the prevision of  conjunctions is assessed by  Lukasiewicz t-norms and we give  explicit solutions for the  linear systems; then, we analyze a selected example.
We give a probabilistic interpretation of  Frank t-norms and t-conorms as prevision of conjunction and disjunction of conditional events, respectively.  
Then,  we characterize the sets of coherent prevision assessments on the families 
 $\{E_1|H_1,\ldots, E_n|H_n, \C_{1\cdots n}\}$ and  $\{E_1|H_1,\ldots, E_n|H_n, \D_{1\cdots n}\}$ in terms of  Frank t-norms and Frank t-conorms, respectively.  In addition, by assuming logical independence, we show that any Frank t-norm   of two conditional events $A|H$ and $B|K$, $T_{\lambda}(A|H,B|K)$, is the conjunction $(A|H)\wedge (B|K)$  associated with the assessment   $\prev[(A|H)\wedge (B|K)]=T_{\lambda}(P(A|H),P(B|K))$. A dual result is given for the disjunction in terms of dual  t-conorm. 
 
 We analyze the case of logical dependence where $A=B$ and we determine the set of all coherent assessments $(x,y,z)$ on $\{A|H,A|K, (A|H) \wedge (A|K)\}$, by also showing  that
 $T_\lambda(A|H,A|K)$  represents a  conjunction $(A|H) \wedge (A|K)$, only  for   $\lambda \in [0,1]$. In particular, when  $HK=\emptyset$, we obtain that  $(A|H) \wedge (A|K)$ coincides with the Product t-norm $T_1(A|H,A|K)=(A|H)\cdot(A|K)$.
 
Given three conditional events, we consider all possible conjunctions among them  and we show that to make prevision assignments on conjunctions by means of the Product t-norm, or the Minimum t-norm, is coherent. Moreover, the conjoined conditionals can be represented as Product t-norms, or  Minimum t-norms, of the  involved conditional events. This representation may not hold for the Lukasiewicz t-norm. Indeed, we  show by a counterexample that prevision assignments on conjunctions by means of the Lukasiewicz t-norm may be  not coherent and 
 we examine some sufficient conditions for coherence and incoherence. Finally, we give two sufficient conditions  for coherence and incoherence   when using the Lukasiewicz t-norm.  	
 
A relevant aspect which would deserve  investigation  is  the application  of our  results on compound conditionals and t-norms in  statistical matching, misclassified data, data fusion,  aggregation operators, fuzzy logic,   belief and  plausibility functions, and  description logic  (\cite{BaKR02,BrCV12,CoPV17,CoPV20UMI,CoVa18,DiVa17,DiZio06,DuFP20,dubo16,DuLL07,GMMP09,PeVa20,RaGG20}).
This paper originated  from  \cite{GiSa19b} and the large part of the  material is new.  In particular   all the results given in 
 Section \ref{SEC:3}, Section \ref{SEC:4}, and Subsection \ref{SEC:5.1} are new.
Revised  and  extended material from  \cite{GiSa19b} is given in  Subsections \ref{SEC:5.2} and \ref{SEC:5.3}, and Section \ref{SEC:6}.

The paper is organized as follows: In Section \ref{SECT:PRELIMINARIES} we recall some preliminary notions and theoretical results on  conditional random quantities and coherence. We 
give  some examples and we examine an extended  notion of conditional random quantity $X|H$. We recall compound  conditionals and Frank t-norms.
 In Section \ref{SEC:3},  by studying the solvability of suitable linear systems,
  we show under logical independence the sharpness of Fr\'echet-Hoeffding bounds for  the prevision of the conjunction $\C_{1\cdots n}$  of $n$ conditional events; we illustrate more details in  the case $n=3$.  We also 
  give a geometrical characterization of the set $\Pi$ of all coherent prevision assessments on $\F=\{E_1|H_1,\ldots, E_n|H_n, \C_{1\cdots n}\}$, by showing that $\Pi$  is convex.
   In Section \ref{SEC:4} we examine in detail the case where the prevision of the conjunction is assessed by means of Lukasiewicz t-norm and we analyze a selected example.  In Section \ref{SEC:5} we study the representation of the prevision, for  the conjunction $\C_{1\cdots n}$ and the disjunction $\D_{1\cdots n}$ of $n$ conditional events, as  a  Frank t-norm $T_{\lambda}$ and a Frank t-conorm $S_{\lambda}$, respectively. Then, by exploiting Frank t-norms and  t-conorms, we characterize the sets of coherent prevision assessments on  $\{E_1|H_1,\ldots, E_n|H_n, \C_{1\cdots n}\}$ and on $\{E_1|H_1,\ldots, E_n|H_n, \D_{1\cdots n}\}$. We show that,  under logical independence, $T_{\lambda}(A|H,B|K)=(A|H)\wedge (B|K)$ and $S_{\lambda}(A|H,B|K)=(A|H)\vee (B|K)$ for every $\lambda\in[0,+\infty]$. We also examine   the case of logical dependence where $A=B$ and the particular case where $HK=\emptyset$.
 In Section \ref{SEC:6}   we give some particular results  on Frank t-norms and coherence of prevision assessments on the family
 $\F=\{E_1|H_1,E_2|H_2,E_3|H_3, (E_1|H_1)\wedge (E_2|H_2), (E_1|H_1)\wedge (E_3|H_3), (E_2|H_2)\wedge (E_3|H_3)
,(E_1|H_1)\wedge (E_2|H_2)\wedge (E_3|H_3)\}$.  In particular, 
 we show that, under logical independence,  
 the  assessment  
$
 \mathcal{M}=(x_1,x_2,x_3,T_{\lambda}(x_1,x_2),T_{\lambda}(x_1,x_3),T_{\lambda}(x_2,x_3),T_{\lambda}(x_1,x_2,x_3))
 $ on
 $\F$ is coherent for every $(x_1,x_2,x_3)\in[0,1]^3$ when $T_{\lambda}$ is the minimum t-norm,  or the product t-norm. 
  Moreover, when $T_{\lambda}$ is the Lukasiewicz t-norm,   the coherence of $\mathcal{M}$ is not assured and  hence
it may happen that  the Frank t-norm of three conditional events is not a  conjunction. Finally, we give some sufficient conditions  for coherence/incoherence of  $\M$ when using the Lukasiewicz t-norm.
In Section \ref{SEC:CONCLUSIONS} we give some conclusions.

\section{Preliminary notions and results}
\label{SECT:PRELIMINARIES}
In this section we recall some basic notions and results which concern conditional events, conditional random quantities, coherence (see, e.g., \cite{BeMR17,biazzo05,BiGS12,CaLS07,coletti02,gilio16,LaSa20,PeVa17,PfSa17}), and logical operations among conditional events (see \cite{GiSa13c,GiSa13a,GiSa14,GiSa17,GiSa19,GiSa21Argumenta}).
\subsection{Conditional events, conditional random quantities, and coherent prevision assessments}
\label{sect:2.2}
Uncertainty about unknown facts is formalized by events. 
In formal terms, an event $E$  is a two-valued logical entity which can be \emph{true}, or \emph{false}. 
The \emph{indicator} of $E$, denoted by the same symbol, is  1, or 0, according to  whether $E$ is true, or false. 
Thus, a symbol like $xE$ represents the product of the quantity $x$ and the indicator of the event $E$.
The sure event and impossible event are denoted by $\Omega$ and  $\emptyset$, respectively. 
Given two events $E_1$ and $E_2$,  we denote by $E_1\land E_2$, or simply by $E_1E_2$, (resp., $E_1 \vee E_2$) the logical conjunction (resp., the  logical disjunction).   
The  negation of $E$ is denoted $\no{E}$.  We simply write $E_1 \subseteq E_2$ to denote that $E_1$ logically implies $E_2$, that is  $E_1\no{E}_2=\emptyset$.
We recall that  $n$ events $E_1,\ldots,E_n$ are logically independent when the number $m$ of constituents, or possible worlds, generated by them  is $2^n$.

Given two events $E,H$,
with $H \neq \emptyset$, the conditional event $E|H$
is defined as a three-valued logical entity which is \emph{true}, or
\emph{false}, or \emph{void}, according to whether $EH$ is true, or $\no{E}H$
is true, or $\no{H}$ is true, respectively.

Given a  (real) random quantity $X$ and an event $H\neq \emptyset$,
 we denote by  $\prev(X|H)$ the prevision of $X$ conditional on $H$, with $\prev(X|H)=P(E|H)$ when $X$ is (the indicator of) an event $E$. 
In what follows,
for any given conditional random quantity $X|H$, we assume that, when $H$ is true, the set of possible values of $X$ is a finite subset of  the set of real numbers $\mathbb{R}$.
In this case we say 
that $X|H$ is a finite conditional random quantity.   In the framework of coherence, to assess $\prev(X|H)=\mu$ means that, for every real number $s$,  you are willing to pay 
an amount $s\mu$ and to receive $s(XH+\mu \no{H})$, that is  to receive $sX$, or $s\mu$, according
to whether $H$ is true, or $\widebar{H}$
is true (bet  called off), respectively. 
The random gain is $G=s(XH+\mu \widebar{H})-s\mu=
sH(X-\mu)$. 
In particular, given any conditional event  $E|H$, if we  assess  $P(E|H)=x$, then the random gain is $G=sH(E-x)$.

Given a prevision function $\pr$ defined on an arbitrary family $\mathcal{K}$ of finite
conditional random quantities, consider a finite subfamily $\F = \{X_1|H_1, \ldots,X_n|H_n\} \subseteq \mathcal{K}$ and the vector
$\M=(\mu_1,\ldots, \mu_n)$, where $\mu_i = \pr(X_i|H_i)$ is the
assessed prevision for the conditional random quantity $X_i|H_i$, $i\in \{1,\ldots,n\}$.
With the pair $(\F,\M)$ we associate the random gain $G =
\sum_{i=1}^ns_iH_i(X_i - \mu_i)$ 
and we  denote by $\G_{\mathcal{H}_n}$ the set of values of $G$ restricted to $\H_n= H_1 \vee \cdots \vee H_n$.
Then, by the {\em  betting scheme} of de Finetti, 
coherence is defined as: 
\begin{definition}\label{COER-RQ}{\rm
		The function $\prev$ defined on $\mathcal{K}$ is coherent if and only if, $\forall n
		\geq 1$, $\forall \, s_1, \ldots,
		s_n$, $\forall \, \F=\{X_1|H_1, \ldots,X_n|H_n\} \subseteq \mathcal{K}$,   it holds that: $min \; \G_{\mathcal{H}_n} \; \leq 0 \leq max \;
		\G_{\mathcal{H}_n}$. }
\end{definition}
As it is well known, in Definition \ref{COER-RQ},  the condition 
$ min \; \G_{\mathcal{H}_n} \; \leq 0 \leq max \;
\G_{\mathcal{H}_n}$ can be equivalently replaced by  $ \min \; \G_{\mathcal{H}_n} \; \leq 0$, or by  $\max \; \G_{\mathcal{H}_n} \; \geq 0.$

A conditional prevision assessment $\prev$ on $\mathcal{K}$ is not coherent, or  incoherent, 	if and only if  there exists a finite combination of $n$ bets such that $ \min\mathcal{G}_{\mathcal{H}_n}\cdot \max \mathcal{G}_{\mathcal{H}_n}>0$, that is such that
the values in $\mathcal{G}_{\mathcal{H}_n}$
are all positive, or all negative ({\em Dutch Book}). In other words, $\prev$ is incoherent if and only if there exists a finite combination of $n$ bets such that, 
after discarding the case where  all the bets are called off, the values of the random gain are   all positive or all negative. In the particular case where $\mathcal{K}$ is a family of conditional events, then Definition \ref{COER-RQ} becomes  the well known definition of coherence for a probability function, denoted  as $P$, defined on $\mathcal{K}$.

By Definition \ref{COER-RQ}, given any (finite) conditional random quantity $X|H$ and denoting by  $x_1,\ldots,x_r$  the possible values  of $X$ when  $H$ is true,  a prevision assessment $\mu$ on $X|H$ is coherent if and only if 
$\min\{x_1,\ldots,x_r\}\leq\mu \leq \max\{x_1,\ldots,x_r\}$.
When $X$ is  (the indicator of) an event $E$, with 
 $P(E|H)=x$,  the  coherence of   $x$ amounts to $0\leq x\leq 1$, or $x=0$, or $x=1$, according to whether
$\emptyset \neq EH \neq H$, or 
$EH=\emptyset$, or  $ EH=H$, respectively.

Given a family $\F = \{X_1|H_1,\ldots,X_n|H_n\}$, for each $i = 1,\ldots,n,$ we denote by $\{x_{i1}, \ldots,x_{ir_i}\}$ the set of possible values for the restriction of $X_i$ to $H_i$; then, for each $i = 1,\ldots,n,$ and $j = 1, \ldots, r_i$, we set $A_{ij} = (X_i = x_{ij})$. Of course, for each $i$,  the family $\{\no{H}_i,\, A_{ij}H_i \,,\; j = 1, \ldots, r_i\}$ is a partition of the sure event $\Omega$, with  $A_{ij}H_i=A_{ij}$ and          $\bigvee_{j=1}^{r_i}A_{ij}=H_i$, that is $A_{11}\vee \cdots \vee A_{1r_1} \vee \no{H}_{1}=\cdots=A_{n1}\vee \cdots \vee A_{nr_n} \vee \no{H}_{n}=\Omega,$
or more explicitly $(X_{1}=x_{11})\vee \cdots \vee (X_{1}=x_{1r_1}) \vee  \no{H}_{1}=\cdots=(X_{n}=x_{n1})\vee \cdots \vee (X_{n}=x_{nr_n}) \vee  \no{H}_{n}=\Omega.$
Then,
\begin{equation}\label{EQ:OMEGARQ}
\Omega=(A_{11}\vee \cdots \vee A_{1r_1} \vee \no{H}_{1})\wedge \cdots\wedge (A_{n1}\vee \cdots \vee A_{nr_n} \vee \no{H}_{n}).
\end{equation}
By expanding the
expression in (\ref{EQ:OMEGARQ}) and by discarding the logical conjunctions which coincide with $\emptyset$, we obtain a disjunctive representation of  
$\Omega$. 
The elements of this disjunction form a partition of $\Omega$ and are called the constituents generated by the family $\F$. 
We denote by  
$C_1, \ldots, C_m$ the constituents
contained in $\H_n=H_1\vee \cdots \vee H_n$. Moreover, when $\H_n\neq \Omega$, 
we set $C_0 = \no{\H}_n=\no{H}_1 \cdots \no{H}_n$.
Hence $
\Omega= \bigvee_{h = 0}^m C_h\,.
$
In particular, the constituents generated by a family of $n$ conditional events $\{E_1|H_1, \ldots, E_n|H_n\}$ are obtained by expanding the expression
$(E_1H_1 \vee \no{E}_1H_1
	\vee \no{H}_1)\wedge \cdots \wedge(E_nH_n \vee \no{E}_nH_n
	\vee \no{H}_n)$,
and by discarding the logical conjunctions which are impossible.  If  $E_1,\ldots,E_n,H_1,\ldots, H_n$  are logically independent, then the number of constituents for the family $ \{E_1|H_1, \ldots, E_n|H_n\}$ is $3^n$ (in which  case the conditional events $E_1|H_1, \ldots, E_n|H_n$ are said logically independent). Given a prevision assessment 
 $\M=(\mu_1,\ldots, \mu_n)$ 
on  $\F=\{X_1|H_1,\ldots,X_n|H_n\}$, with each constituent $C_h,\, h =1,\ldots,m$, we associate a vector
\begin{equation}\label{EQ:Qh}
Q_h=(q_{h1},\ldots,q_{hn}), \mbox{ with } 
q_{hi}=
\left\{
\begin{array}{ll}
x_{ij}, \mbox{ if } C_h \subseteq
A_{ij},\, j=1,\ldots,r_i,\\
\mu_i, \mbox{ if } C_h \subseteq
\no{H}_{i}.\\
\end{array}
\right.
\end{equation}
With $C_0$ it is associated  $Q_0=\M = (\mu_1,\ldots,\mu_n)$. 
As, for each $i, j$, the quantities $x_{ij}, \mu_i$  are real numbers, it holds that  $Q_h\in \mathbb{R}^n$, $h=0,1,\ldots, m$.  
Denoting by $\I$ the convex hull of $Q_1, \ldots, Q_m$, the condition  $\M\in \I$ amounts to the existence of a vector $(\lambda_1,\ldots,\lambda_m)$ such that:
$ \sum_{h=1}^m \lambda_h Q_h = \M \,,\; \sum_{h=1}^m\lambda_h
= 1 \,,\; \lambda_h \geq 0 \,,\; \forall \, h$; in other words, $\M\in \I$ is equivalent to the solvability of the system $(\Sigma)$ given below.
\begin{equation}\label{SYST-SIGMA}
(\Sigma) 
\left\{
\begin{array}{ll}
\sum_{h=1}^m \lambda_h q_{hi} =
\mu_i \,,\; i =1,\ldots,n, \\
\sum_{h=1}^m \lambda_h = 1,\;\;
\lambda_h \geq 0 \,,\;  \, h=1,\ldots,m.
\end{array}
\right.
\end{equation}
We say that  $(\Sigma)$ is the system associated to the pair  $(\F,\M)$. By a suitable  alternative theorem, it can be shown that  the solvability of $(\Sigma)$ is equivalent to the  condition: $	\min\G_{\mathcal{H}_n}\leq 0\leq  \max \G_{\mathcal{H}_n}$. 

Let ${\bf x} = (x_1, \ldots, x_m)$, ${\bf y} = (y_1, \ldots, y_n)^t$
and $A=(a_{hi})$ be, respectively, a row $m-$vector, a column
$n-$vector and a $m \times n-$matrix. The vector ${\bf x}$ is said
{\em semipositive} if $x_h \geq 0,   h=1,\ldots,m, \;$ and ${x_1 + \cdots + x_m  >  0}$. Then, we have (cf.\ \cite{Gale60},
Theorem 2.9)
\begin{theorem}\label{ALT1}{\rm
		Exactly one of the following alternatives holds:
		(i) the equation ${\bf x} A = 0$ has a {\em semipositive} solution;
		 (ii) the inequality $A {\bf y} > 0$ has a solution. }\end{theorem}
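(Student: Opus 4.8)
The plan is to recast both alternatives geometrically in terms of the rows of $A$ and then settle the dichotomy by a projection argument. First I would write $a_1,\dots,a_m\in\mathbb{R}^n$ for the rows of $A$, so that $\mathbf{x}A=\sum_{h=1}^m x_h a_h$ and the $h$-th component of the column vector $A\mathbf{y}$ equals the inner product $\langle a_h,\mathbf{y}\rangle$. A semipositive solution $\mathbf{x}$ of $\mathbf{x}A=0$ can be normalized by dividing through by its (strictly positive) coordinate sum, turning it into a convex combination; hence alternative (i) holds if and only if the origin lies in the compact convex set $C:=\mathrm{conv}\{a_1,\dots,a_m\}$. Likewise, alternative (ii) holds if and only if there is a vector $\mathbf{y}\in\mathbb{R}^n$ with $\langle a_h,\mathbf{y}\rangle>0$ for every $h=1,\dots,m$.

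With this reformulation the two alternatives are easily seen to be mutually exclusive. Indeed, if both held we could choose weights $\lambda_h\ge 0$ with $\sum_{h}\lambda_h=1$ and $\sum_h\lambda_h a_h=0$, together with a $\mathbf{y}$ as in (ii); then
\[
0=\Big\langle \sum_{h=1}^m\lambda_h a_h,\ \mathbf{y}\Big\rangle=\sum_{h=1}^m\lambda_h\langle a_h,\mathbf{y}\rangle>0,
\]
a contradiction. So at most one of (i), (ii) can occur.

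It remains to prove that at least one of them does. Suppose (i) fails, that is $0\notin C$. Since $C$ is the convex hull of finitely many points it is compact and convex, so the Euclidean projection $p$ of the origin onto $C$ exists, is unique, and is nonzero. The variational characterization of the projection yields $\langle -p,\ a-p\rangle\le 0$ for every $a\in C$, which rearranges to $\langle a,p\rangle\ge\|p\|^2>0$; in particular $\langle a_h,p\rangle>0$ for all $h$. Taking $\mathbf{y}=p$ then gives $A\mathbf{y}>0$, so (ii) holds, completing the dichotomy. I expect the projection/strict-separation step to be the only delicate point: it is precisely the finiteness of the row family that forces $C$ to be compact, and this compactness is what makes the separation strict and, crucially, produces a single $\mathbf{y}$ that works simultaneously against every row.
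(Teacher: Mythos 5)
Your proof is correct, and it is worth noting at the outset that the paper itself does not prove this statement at all: it is quoted verbatim from Gale's \emph{The Theory of Linear Economic Models} (Theorem 2.9 there), so your argument supplies a proof that the paper deliberately omits. Your route is the standard geometric proof of what is essentially Gordan's theorem of the alternative: reformulate (i) as $0\in C=\mathrm{conv}\{a_1,\dots,a_m\}$, reformulate (ii) as the existence of a vector making a strictly positive inner product with every row, check mutual exclusivity by the obvious convex-combination computation, and obtain existence from strict separation via the Euclidean projection of the origin onto $C$. All steps check out: the normalization of a semipositive $\mathbf{x}$ to a convex combination is legitimate precisely because the coordinate sum is strictly positive; the exclusivity computation correctly uses that at least one weight $\lambda_h$ is positive; and the variational inequality $\langle -p,\,a-p\rangle\le 0$ does rearrange to $\langle a,p\rangle\ge\|p\|^2>0$ uniformly over $a\in C$. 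By contrast, Gale's own derivation (the one the paper points to) is algebraic, building the result out of his basic theorems on solvability of linear equations and inequalities rather than out of Euclidean projection. What your approach buys is self-containedness and brevity---only elementary Hilbert-space convexity is needed---while the citation route embeds the statement in the general framework of linear-programming duality, from which many sibling alternative theorems (Farkas, Stiemke, Motzkin) follow uniformly. One small refinement: what the projection argument really needs is that $C$ be \emph{closed} (plus nonempty and convex), not compact; finiteness of the row family guarantees compactness and hence closedness, but closedness is the operative property.
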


We observe that, choosing $a_{hi}=q_{hi}-\mu_i$,  $ h=1,\ldots,m$ , $i=1,\ldots,n$,
the solvability of ${\bf x} A = 0$ means that ${\M} \in \I$, while the
solvability of $A {\bf y} > 0$ means that, choosing $s_i = y_i$, $i=1,\ldots,n$, one has $\min \; \G_{\mathcal{H}_n} \; > 0$
(and hence $\M$ would be incoherent). Therefore, by applying Theorem \ref{ALT1} with
$A=(q_{hi}-\mu_i )$, we obtain that
${\M} \in \I$
 if and only if
 $	\min\G_{\mathcal{H}_n}\leq 0$, that is, 
  $(\Sigma)$ is solvable 
   if and only if
  $	\min\G_{\mathcal{H}_n}\leq 0$.
  
Given  a nonempty subset $J\subseteq \{1,\ldots,n\}$, we set $ \F_{J}=\{X_j|H_j: j\in J\} $ and  $\M_J=(\mu_j:j\in J )$, then we denote by $(\Sigma_J)$
  the system  associated to the pair $(\F_J,\M_J)$. Of course, when $J = \{1,\ldots,n\}$ it holds that $(\F_J,\M_J)=(\F,\M)$ and $(\Sigma_J)=(\Sigma)$. Then, by Definition \ref{COER-RQ} and Theorem \ref{ALT1}, for the prevision assessment $\M$ on $\F$ it holds that
  \begin{equation}
\M  \mbox{ is coherent  } \Longleftrightarrow  (\Sigma_J) \mbox{ is solvable },\,\, \forall\,\, J\subseteq \{1,\ldots,n\}.
  \end{equation} 
 In other words $\M$ on $\F$ is coherent if and only if, for every  nonempty subset $J\subseteq \{1,\ldots,n\}$, the sub-vector $\M_J$ belongs to the convex hull $\I_J$ associated to the pair $(\F_J,\M_J)$. \\
 
 Given the assessment $\M =(\mu_1,\ldots,\mu_n)$ on  $\F =
 \{X_1|H_1,\ldots,X_n|H_n\}$, let $S$ be the set of solutions $\Lambda = (\lambda_1, \ldots,\lambda_m)$ of system $(\Sigma)$ defined in  (\ref{SYST-SIGMA}).   
 We point out that the solvability of  system $(\Sigma)$ (i.e., the condition $\M\in \I$) is a necessary (but not sufficient) condition for coherence of $\M$ on $\F$. 
 We introduce, for each $i=1,\ldots,n$, the function  $\sum_{h:C_h\subseteq H_i}\lambda_h$ of the vector  $\Lambda=(\lambda_1,\ldots,\lambda_m)$. Moreover, by assuming  the system $(\Sigma)$  solvable, that is  $S \neq \emptyset$, we
compute 
the maximum 
$M_i$
of the function $ \sum_{h:C_h\subseteq H_i}\lambda_h$
with respect to $\Lambda\in S$. Then, 
  we define:  
 \begin{equation}\label{EQ:I0}
 \begin{array}{ll}
 I_0 = \{i :  M_i= 0; i=1,\ldots,n\},\;\;
 \F_0 = \{X_i|H_i \,, i \in I_0\},\;\;  \M_0 = (\mu_i ,\, i \in I_0)\,.
 \end{array}
 \end{equation}
 We observe that   $i\in I_0$ if and only if the (unique) coherent extension of $\M$ to $H_i|\H_n$ is zero.\\
Of course, the previous notions can  be used in the case of conditional events.   We observe that, given a probability assessment $\P =(p_1,\ldots,p_n)$ on a family of $n$ conditional events $\F =
\{E_1|H_1,\ldots,E_n|H_n\}$, we can determine the constituents $C_0,C_1,\ldots,C_m$, where $C_0=\no{H}_1 \cdots \no{H}_n$, and the associated points $Q_0,Q_1,\ldots,Q_m$, where $Q_0=\P$.  We observe that $Q_h=(q_{h1},\ldots,q_{hn})$, with $q_{hi}\in\{1,0,p_i\}$, $i=1,\ldots,n$.
We also observe that given a subset $J\subset \{1,\ldots,n\}$, we can determine the constituents $C_{hJ}$'s and the corresponding points $Q_{hJ}$'s associated to the pair $(\F_J,\M_J)$. We set $J^c=\{1,\ldots,n\} \setminus J$ and if, for instance,  $J = \{1,\ldots,r\}$, with $r<n$, then $J^c=\{r+1,\ldots,n\}$ and  $\P=(\P_{J},\P_{J^c})$. Moreover, each point $Q_h$ can be represented, for suitable indexes $i_h,k_h$, as $Q_h=(Q_{i_hJ},Q_{k_hJ^c})$; then, any linear convex combination $\sum_h \lambda_h Q_h$ coincides with $(\sum_h \lambda_hQ_{i_hJ}, \sum_h \lambda_h Q_{k_hJ^c})$. A similar  representation holds for $J = \{i_1,\ldots,i_r\}$, after a suitable permutation of indexes. On this basis, we recall  three results  (\cite[Theorems 3.1, 3.2, 3.3]{Gili93}).
\begin{theorem}\label{THM3.1}
Given a subset $J\subset \{1,\ldots,n\}$, if there exist $m$ nonnegative coefficients $\lambda_1,\ldots,\lambda_m$, with $\sum_{h=1}^m\lambda_h = 1$,  such that $\P_J=\sum_{h=1}^m\lambda_hQ_{i_hJ}$, if  $\sum_{h:C_h \subseteq H_J}\lambda_h > 0$, where $H_J=\bigvee_{j \in J}H_j$, then $\P_J \in \I_J$.
    \end{theorem}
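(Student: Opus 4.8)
The plan is to start from the hypothesised convex representation $\P_J=\sum_{h=1}^m\lambda_h Q_{i_hJ}$ and to isolate the contribution of those full-family constituents that lie outside $H_J$. First I would partition $\{1,\ldots,m\}$ into $L=\{h:C_h\subseteq H_J\}$ and its complement. Since each $C_h$ is a constituent, for every $j\in J$ it implies either $H_j$ or $\no{H}_j$; hence either $C_h$ implies some $H_j$ with $j\in J$ (so that $C_h\subseteq H_J$) or $C_h\subseteq\bigwedge_{j\in J}\no{H}_j=\no{H}_J$. Thus $h\notin L$ forces $C_h\subseteq\no{H}_J$, and the two cases are exhaustive and mutually exclusive.

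The crucial observation concerns the projected points $Q_{i_hJ}$ for $h\notin L$. By the definition of the associated vectors in (\ref{EQ:Qh}), specialised to conditional events, the $j$-th coordinate of $Q_h$ equals $p_j$ whenever $C_h\subseteq\no{H}_j$. For $h\notin L$ this holds for \emph{every} $j\in J$, so $Q_{i_hJ}=(p_j:j\in J)=\P_J$. Consequently the out-of-$H_J$ part of the combination contributes only multiples of $\P_J$ itself. Writing $\mu=\sum_{h\notin L}\lambda_h$ and using this, I would split the representation as
\[
\P_J=\sum_{h\in L}\lambda_h Q_{i_hJ}+\mu\,\P_J,
\]
whence $(1-\mu)\P_J=\sum_{h\in L}\lambda_h Q_{i_hJ}$. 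The hypothesis $\sum_{h:C_h\subseteq H_J}\lambda_h=\sum_{h\in L}\lambda_h>0$, together with $\sum_h\lambda_h=1$, gives $1-\mu=\sum_{h\in L}\lambda_h>0$, so I may divide to obtain $\P_J=\sum_{h\in L}\frac{\lambda_h}{1-\mu}Q_{i_hJ}$. The coefficients $\lambda_h/(1-\mu)$ are nonnegative and sum to $1$, and each $Q_{i_hJ}$ with $h\in L$ is the point attached to a sub-family constituent contained in $H_J$ — indeed $C_h\subseteq H_J$ implies some $H_{j_0}$ with $j_0\in J$, and its trace on the $J$-indices likewise implies $H_{j_0}\subseteq H_J$ — that is, one of the generators of $\I_J$. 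Hence $\P_J\in\I_J$.

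The main obstacle is the self-referential character of the out-of-$H_J$ contribution: the very point we wish to place in $\I_J$ reappears inside its own convex representation. The positivity assumption $\sum_{h:C_h\subseteq H_J}\lambda_h>0$ is precisely what makes $1-\mu$ strictly positive, so that this circularity can be resolved by dividing; without it the identity $(1-\mu)\P_J=\sum_{h\in L}\lambda_h Q_{i_hJ}$ could collapse to $0=0$ and carry no information about $\P_J$. I would also take care to justify that the $J$-projection $Q_{i_hJ}$ of a full-family point coincides with the point attached to the corresponding sub-family constituent — they agree coordinatewise because that sub-family constituent is the trace of $C_h$ on the indices of $J$ — so that the generators appearing in the final combination are genuinely those defining $\I_J$.
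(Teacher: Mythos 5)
Your proof is correct. There is, however, no internal proof to compare it against: the paper states this theorem without proof, recalling it from \cite{Gili93}, so your argument can only be judged on its own terms — and it is the natural (and standard) one for this result. The two pillars are exactly right: (1) every full-family constituent $C_h$ contained in $\H_n$ satisfies either $C_h\subseteq H_J$ or $C_h\subseteq \no{H}_J$, and in the latter case the $J$-projection satisfies $Q_{i_hJ}=\P_J$ by (\ref{EQ:Qh}), since all $J$-coordinates of $Q_h$ are then the assessed values $p_j$; (2) the hypothesis $\sum_{h:C_h\subseteq H_J}\lambda_h>0$ is precisely what allows you to move the self-referential term $\mu\,\P_J$ to the left-hand side and renormalise, turning the identity into a genuine convex combination of points $Q_{i_hJ}$ with $C_h\subseteq H_J$. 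Your closing check — that for such $h$ the projection $Q_{i_hJ}$ coincides with the point attached to the trace of $C_h$ on the indices of $J$, which is a sub-family constituent implying $H_J$ and hence a generator of $\I_J$ — is the right thing to verify and is correctly justified coordinatewise. The only cosmetic remark is that distinct $h$ may yield the same sub-family point (the map $h\mapsto i_h$ need not be injective), which is harmless: a convex combination with repeated points is still a convex combination of the distinct generators.
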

\begin{theorem}\label{THM3.2}
	If $\P\in \I$, then for every $J\subset \{1,\ldots,n\}$ such that $J\setminus I_0\neq \emptyset $ it holds that $\P_{J}\in \I_J$.
  \end{theorem}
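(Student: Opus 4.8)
The plan is to reduce the claim to Theorem~\ref{THM3.1}, which already provides a sufficient condition for $\P_J\in\I_J$: the existence of nonnegative coefficients $\lambda_1,\ldots,\lambda_m$ with $\sum_{h=1}^m\lambda_h=1$ such that $\P_J=\sum_{h=1}^m\lambda_h Q_{i_hJ}$ \emph{and} $\sum_{h:C_h\subseteq H_J}\lambda_h>0$, where $H_J=\bigvee_{j\in J}H_j$. So the entire problem collapses to exhibiting one solution of $(\Sigma)$ whose total mass on the constituents contained in $H_J$ is strictly positive.

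First I would record what the hypothesis $\P\in\I$ buys us. It means $(\Sigma)$ is solvable, so the solution set $S$ is nonempty, and for every $\Lambda=(\lambda_1,\ldots,\lambda_m)\in S$ the projection identity $\P_J=\sum_{h=1}^m\lambda_h Q_{i_hJ}$ holds. Indeed this is just the restriction to the $J$-coordinates of the identity $\sum_h\lambda_h Q_h=\P$, read through the block representation $Q_h=(Q_{i_hJ},Q_{k_hJ^c})$ recalled before the statement. Hence the \emph{first} requirement of Theorem~\ref{THM3.1} is met automatically by any member of $S$, and only the positivity of the $H_J$-mass remains to be secured.

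Next I would use the hypothesis $J\setminus I_0\neq\emptyset$ to choose an index $i\in J$ with $i\notin I_0$. By the definition of $I_0$ in (\ref{EQ:I0}), $i\notin I_0$ means $M_i>0$, i.e.\ the maximum of the linear function $\sum_{h:C_h\subseteq H_i}\lambda_h$ over the (compact) polytope $S$ is positive and is attained at some $\Lambda^{\ast}=(\lambda_1^{\ast},\ldots,\lambda_m^{\ast})\in S$, so that $\sum_{h:C_h\subseteq H_i}\lambda_h^{\ast}=M_i>0$. The key elementary observation is that $i\in J$ forces $H_i\subseteq H_J$, whence $\{h:C_h\subseteq H_i\}\subseteq\{h:C_h\subseteq H_J\}$; since the $\lambda_h^{\ast}$ are nonnegative, this yields $\sum_{h:C_h\subseteq H_J}\lambda_h^{\ast}\geq\sum_{h:C_h\subseteq H_i}\lambda_h^{\ast}=M_i>0$.

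Finally I would combine the two facts: the same $\Lambda^{\ast}\in S$ satisfies $\P_J=\sum_h\lambda_h^{\ast}Q_{i_hJ}$ and $\sum_{h:C_h\subseteq H_J}\lambda_h^{\ast}>0$, so Theorem~\ref{THM3.1} applies and delivers $\P_J\in\I_J$. The argument is short, and I expect the only real obstacle to be conceptual rather than computational: recognizing that the role of the condition $J\setminus I_0\neq\emptyset$ is precisely to furnish a solution with strictly positive conditioning mass on $H_J$. Without it, every solution could concentrate on constituents outside $H_J$, the positivity hypothesis of Theorem~\ref{THM3.1} would fail, and the conclusion $\P_J\in\I_J$ could genuinely break down.
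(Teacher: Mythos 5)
Your proof is correct and follows exactly the route the paper intends: the paper recalls Theorems \ref{THM3.1}--\ref{THM3.3} from \cite{Gili93} without reproducing proofs, and the standard argument there is precisely your reduction to Theorem \ref{THM3.1} --- pick $i\in J\setminus I_0$, take a solution $\Lambda^{\ast}\in S$ attaining $M_i>0$ (which exists since $S$ is a compact polytope), note $H_i\subseteq H_J$ so the mass on $\{h:C_h\subseteq H_J\}$ dominates that on $\{h:C_h\subseteq H_i\}$, and use the block projection $\P_J=\sum_h\lambda_h^{\ast}Q_{i_hJ}$. Nothing is missing; your identification of $J\setminus I_0\neq\emptyset$ as the device supplying positive conditioning mass on $H_J$ is exactly the point of the hypothesis.
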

\begin{theorem}\label{THM3.3}
	The conditional probability assessment ${\P} = (p_1,\ldots,p_n)$ on
	the family $\F = \{E_1|H_1,\ldots,E_n|H_n\}$ is coherent if
	and only if the following conditions are satisfied: \\
	$(i)$ $\P\in\I$; $(ii)$ if $I_0 \neq \emptyset$, then $\P_0$ is coherent. 
 \end{theorem}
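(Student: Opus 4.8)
The plan is to prove both directions of the biconditional by reducing coherence to the family of solvability conditions recalled just above, namely that $\P$ is coherent if and only if $(\Sigma_J)$ is solvable (equivalently $\P_J\in\I_J$) for every nonempty $J\subseteq\{1,\ldots,n\}$. The whole content of the theorem is that these $2^n-1$ conditions collapse to the single geometric condition (i) together with the recursive condition (ii).

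First I would prove necessity. Assume $\P$ is coherent. Taking $J=\{1,\ldots,n\}$ in the recalled characterization gives at once the solvability of $(\Sigma)$, that is $\P\in\I$, which is (i). For (ii) I would use that coherence is hereditary: by Definition \ref{COER-RQ} every finite subfamily of $\F_0$ is also a finite subfamily of $\F$, and the assessed values of $\P_0$ are the corresponding restrictions of $\P$, so the Dutch-book-free condition satisfied by $\P$ is inherited by $\P_0$ on $\F_0$. Hence $\P_0$ is coherent and (ii) holds. Both steps here are routine.

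The substance is sufficiency, so I would assume (i) and (ii) and fix an arbitrary nonempty $J\subseteq\{1,\ldots,n\}$, aiming to show $\P_J\in\I_J$. I would split into three exhaustive cases according to the position of $J$ relative to $I_0$. If $J=\{1,\ldots,n\}$, solvability of $(\Sigma_J)=(\Sigma)$ is exactly (i). If $J$ is a proper subset with $J\setminus I_0\neq\emptyset$, then $\P\in\I$ together with Theorem \ref{THM3.2} yields $\P_J\in\I_J$; the underlying mechanism is that an index $i\in J\setminus I_0$ has $M_i>0$, so some solution of $(\Sigma)$ places positive mass on constituents contained in $H_i\subseteq H_J$, which is precisely the hypothesis that activates Theorem \ref{THM3.1}. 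Finally, if $J\subseteq I_0$ (which forces $I_0\neq\emptyset$), I would invoke (ii): since $\F_J\subseteq\F_0$ and the convex hull $\I_J$ depends only on the subfamily $\F_J$, heredity of coherence applied to the coherent $\P_0$ gives that $\P_J$ is coherent, in particular $\P_J\in\I_J$. As these three cases are exhaustive for any nonempty $J$, every $(\Sigma_J)$ is solvable, and the recalled characterization yields coherence of $\P$.

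The main obstacle is this last direction, and within it the dichotomy $J\setminus I_0\neq\emptyset$ versus $J\subseteq I_0$. Subsets that meet the complement of $I_0$ are handled uniformly and geometrically by Theorem \ref{THM3.2}, whose force stems from the definition of $I_0$ through the maxima $M_i$ in (\ref{EQ:I0}). Subsets lying entirely inside $I_0$ escape that geometric argument, since there the relevant conditioning events carry zero mass in every solution of $(\Sigma)$; they can only be controlled by passing to the reduced family $\F_0$ and appealing to condition (ii). This is exactly where the recursive structure of coherence, and the reason (ii) cannot be omitted, become visible, and it is the step that requires the most care to state cleanly.
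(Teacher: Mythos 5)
Your proof is correct. Note, however, that the paper never proves this statement: Theorem \ref{THM3.3} is \emph{recalled} (together with Theorems \ref{THM3.1} and \ref{THM3.2}) from the reference cited as [Gili93], so there is no in-paper proof to compare against. Your reconstruction assembles exactly the ingredients the paper sets up for this purpose --- the equivalence between coherence of $\P$ and solvability of $(\Sigma_J)$ for every nonempty $J\subseteq\{1,\ldots,n\}$, heredity of coherence for necessity, and, for sufficiency, the exhaustive split into $J=\{1,\ldots,n\}$ (condition $(i)$), $J$ proper with $J\setminus I_0\neq\emptyset$ (Theorem \ref{THM3.2}), and $J\subseteq I_0$ (condition $(ii)$ plus heredity, using that $\I_J$ depends only on $(\F_J,\P_J)$) --- which is the intended way these recalled results combine.
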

We remark that when we consider prevision assessments on
 conditional random quantities results similar to Theorems \ref{THM3.1}\,, \ref{THM3.2}\,, \ref{THM3.3} can be obtained. In particular, by taking into account that  $\M\in\I$ amounts to the solvability of system $(\Sigma)$,  Theorem \ref{THM3.3} becomes
\begin{theorem}\label{CNES-PREV-I_0-INT}
		The conditional prevision assessment ${\M} = (\mu_1,\ldots,\mu_n)$ on
		the family $\F = \{X_1|H_1,\ldots,X_n|H_n\}$ is coherent if
		and only if the following conditions are satisfied: \\
		$(i)$ the system $(\Sigma)$ in (\ref{SYST-SIGMA}) is solvable; $(ii)$ if $I_0 \neq \emptyset$, then $\M_0$ is coherent. 
\end{theorem}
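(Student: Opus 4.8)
The plan is to transpose the proof of Theorem \ref{THM3.3} from the conditional-event setting to the conditional-random-quantity setting, exploiting the fact that the whole convex-geometric apparatus is structurally unchanged: the constituents $C_1,\ldots,C_m$, the associated points $Q_h$ defined in (\ref{EQ:Qh}), the convex hull $\I$, and the system $(\Sigma)$ in (\ref{SYST-SIGMA}) are built in exactly the same way, the only difference being that each component $q_{hi}$ now ranges over $\{x_{i1},\ldots,x_{ir_i},\mu_i\}$ rather than over $\{1,0,p_i\}$. The single structural property the arguments actually use is that $q_{hj}=\mu_j$ whenever $C_h\subseteq\no{H}_j$, so that the projected point $Q_{i_hJ}$ equals $\M_J$ for every constituent $C_h\subseteq\no{H}_J$, where $H_J=\bigvee_{j\in J}H_j$. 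First I would record the random-quantity analogues of Theorems \ref{THM3.1} and \ref{THM3.2}, whose proofs carry over verbatim: starting from a solution $\Lambda$ of $(\Sigma)$, one writes $\M_J=\sum_h\lambda_h Q_{i_hJ}$ and separates the part supported on $\{h:C_h\subseteq H_J\}$ from the part supported on $\{h:C_h\subseteq\no{H}_J\}$ (whose points all coincide with $\M_J$); if $\sigma:=\sum_{h:C_h\subseteq H_J}\lambda_h>0$, dividing by $\sigma$ realises $\M_J$ as a convex combination of the points $Q_{i_hJ}$ with $C_h\subseteq H_J$, yielding $\M_J\in\I_J$.

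For necessity, assume $\M$ is coherent on $\F$. Condition $(i)$ is immediate: coherence gives $\min\G_{\mathcal{H}_n}\le 0$, which by the discussion following Theorem \ref{ALT1} is equivalent to $\M\in\I$, i.e.\ to solvability of $(\Sigma)$. For condition $(ii)$, observe that by Definition \ref{COER-RQ} coherence is inherited by every subfamily, since any betting system on a subfamily is a betting system on $\F$ with null stakes on the remaining conditional random quantities; hence, whenever $I_0\neq\emptyset$, the restriction $\M_0$ to $\F_0=\{X_i|H_i: i\in I_0\}$ is coherent.

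For sufficiency, assume $(i)$ and $(ii)$, and recall that $\M$ is coherent if and only if $(\Sigma_J)$ is solvable, equivalently $\M_J\in\I_J$, for every nonempty $J\subseteq\{1,\ldots,n\}$. I would split into two cases. If $J\setminus I_0\neq\emptyset$, pick $i\in J\setminus I_0$; then $M_i>0$ furnishes a solution $\Lambda$ of $(\Sigma)$ with $\sum_{h:C_h\subseteq H_i}\lambda_h>0$, and since $H_i\subseteq H_J$ this forces $\sum_{h:C_h\subseteq H_J}\lambda_h>0$, so the analogue of Theorem \ref{THM3.1} gives $\M_J\in\I_J$ (this is precisely the random-quantity analogue of Theorem \ref{THM3.2}). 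If instead $J\subseteq I_0$, which requires $I_0\neq\emptyset$, then $\F_J\subseteq\F_0$ and $\M_J$ is a restriction of the coherent assessment $\M_0$ granted by $(ii)$, so $\M_J$ is coherent and hence $\M_J\in\I_J$. In either case $(\Sigma_J)$ is solvable, so $\M$ is coherent.

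The step I expect to be the crux is the sufficiency case-split, and in particular recognising that the degenerate indices collected in $I_0$ are exactly the obstruction that solvability of $(\Sigma)$ alone cannot detect. Indeed, if $J\subseteq I_0$ then, using $\{h:C_h\subseteq H_J\}=\bigcup_{j\in J}\{h:C_h\subseteq H_j\}$ together with $M_j=0$ for all $j\in J$, every solution satisfies $\sum_{h:C_h\subseteq H_J}\lambda_h=0$, so the projection argument of Theorem \ref{THM3.1} can never apply and coherence of $\M_J$ must be secured separately through the recursive coherence of $\M_0$ supplied by $(ii)$. By contrast, verifying the random-quantity analogues of Theorems \ref{THM3.1} and \ref{THM3.2} is routine, since those proofs never use the specific values taken by the $q_{hi}$.
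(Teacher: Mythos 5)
Your proof is correct and follows essentially the same route as the paper: the paper gives no separate argument for Theorem \ref{CNES-PREV-I_0-INT}, asserting it as the conditional-random-quantity analogue of Theorem \ref{THM3.3}, whose proof rests on Theorems \ref{THM3.1} and \ref{THM3.2} together with the characterization of coherence by solvability of all the subsystems $(\Sigma_J)$. You have simply written out in full the transposition that the paper leaves implicit, including the necessity of conditions $(i)$ and $(ii)$ and the sufficiency case split $J\setminus I_0\neq\emptyset$ versus $J\subseteq I_0$.
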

We observe that, when $I_0=\emptyset$, coherence of $\M$ amounts to solvability of $(\Sigma)$.  
In order to illustrate the previous results, we examine two examples. 
\begin{example}\label{EX:AHK}
	Let  $A, H, K$ be three events, with 
 $A,H,K$ logically independent. Moreover, let $\P=(x,y)$ be a probability  assessment on the family $\mathscr{E}=\{A|H,A|K\}$, where $x=P(A|H)$, $y=P(A|K)$. The constituents generated by $\mathscr{E}$
 and contained in $H\vee K$ are: 
	$ C_1=AHK$,     $C_2=\widebar{A}HK$, $C_3=\widebar{A}\widebar{H}K$, $C_4=\widebar{A}H\widebar{K}$,
$C_5=A\widebar{H}K$,  $C_6=AH\widebar{K}$. 
 	Then, the points $Q_h$'s  associated with $C_1, \ldots, C_6$ are: $Q_1=(1,1)$, $Q_2=(0,0)$, 
 $Q_3=(x,0)$, $Q_4=(0,y)$, $Q_5=(x,1)$, $Q_6=(1,y)$. 
 Moreover $C_0=\no{H}\no{K}$ and $Q_0=(x,y)=\P$.
  The condition $\P\in \I$, where $\I$ is the convex hull of the points $Q_1,\ldots, Q_6$, amounts to the solvability of the system $(\Sigma)$ below
	\[
	\left.
	\begin{array}{ll}
	\lambda_1 +\lambda_3\,x +\lambda_5\,x +\lambda_6\,\,= \,\,x ,\;\;
	\lambda_1 +\lambda_4\,y+ \lambda_5+ \lambda_6\,y\,\,= \,\,y ,\,\,\
	\lambda_1 +\cdots + \lambda_6\,\,= \,\,1 ,\;\; \lambda_h\geq 0, \;\;\forall h.\\
	\end{array}
	\right.
	\]
We observe that,  for each $(x,y)\in[0,1]^2$, the vector
$\Lambda=	(\lambda_1,\ldots,\lambda_6)=(0,0,\frac{1-y}{2},\frac{1-x}{2},\frac{y}{2},\frac{x}{2})
$
  is a solution of $(\Sigma)$; indeed
$ \P=\lambda_1Q_1+\cdots+\lambda_6Q_6=\frac{1-y}{2}Q_3+\frac{1-x}{2}Q_4+\frac{y}{2}Q_5+\frac{x}{2}Q_6
=\frac{1-y}{2}(x,0)+\frac{1-x}{2}(0,y)+\frac{y}{2}(x,1)+\frac{x}{2}(1,y)=
(x,y).$
   Moreover,  for this solution it holds that 	$
   \sum_{C_h\subseteq H}\lambda_h=\lambda_1+\lambda_2+\lambda_4+\lambda_6=\frac{1}{2}>0$ and $\sum_{C_h\subseteq K}\lambda_h=\lambda_1+\lambda_2+\lambda_3+\lambda_5=\frac{1}{2}>0.
$
	Then, $I_0=\emptyset$ and by Theorem \ref{CNES-PREV-I_0-INT} the  assessment $(x,y)$ is coherent, for every $(x,y)\in[0,1]^2$.
Notice that in particular cases, like $x=0$ or $x=1$, the number of distinct points $Q_h$'s is less than 6, anyway the previous analysis  is still valid.	For instance, when $x=0$ and $0<y<1$ it holds that $Q_2=Q_3$,  
$
\Lambda=(\lambda_1,\ldots,\lambda_6)=(0,0,\frac{1-y}{2},\frac{1}{2},\frac{y}{2},0),
$
and in geometrical terms  it holds that $\P=
\frac{1-y}{2}Q_3+\frac{1}{2}Q_4+\frac{y}{2}Q_5=
\frac{1-y}{2}(0,0)+\frac{1}{2}(0,y)+\frac{y}{2}(0,1)=(0,y)$. 
\end{example}
\begin{example}\label{EX:AHKphi}
	Let  $A, H, K$ be three events, with 
	$HK=\emptyset$ and $A$ logically independent from $H$ and $K$. Moreover, let $\P=(x,y)$ be a probability  assessment on the family $\mathscr{E}=\{A|H,A|K\}$. 
	The constituents generated by $\mathscr{E}$ are  $C_1=\widebar{A}\widebar{H}K$, $C_2=\widebar{A}H\widebar{K}$,
	$C_3=A\widebar{H}K$,  $C_4=AH\widebar{K}$,
	$C_0=\no{H}\no{K}$ (which coincide with  $C_3,C_4,C_5, C_6, C_0$ examined in the Example \ref{EX:AHK}, respectively).  The associated
 points $Q_h$'s are $Q_1=(x,0), Q_2=(0,y), Q_3=(x,1), Q_4=(1,y), \P=(x,y)$. We observe that,  for each $(x,y)\in[0,1]^2$, the vector
 $(\lambda_1,\ldots,\lambda_4)=(\frac{1-y}{2},\frac{1-x}{2},\frac{y}{2},\frac{x}{2})$  is a solution of $(\Sigma)$, with  $I_0=\emptyset$. Then, by Theorem \ref{CNES-PREV-I_0-INT} the  assessment $(x,y)$ is coherent, for every $(x,y)\in[0,1]^2$.
\end{example}
We recall the following extension theorem for  conditional previsions, which is a generalization of de Finetti's fundamental theorem of probability  to conditional random quantities 
(see, e.g.,\cite{Holz85,Rega85,Will75})
\begin{theorem}\label{THM:FUND}
	Let  $\M= (\mu_1, \ldots , \mu_n)$ be a coherent prevision assessment  on a family of bounded conditional random quantities  $\F=\{X_1|H_1,\ldots ,X_n|H_n\}$. Moreover,  let $X |H$ be a further bounded conditional random quantity. Then, there exists a suitable closed interval $[\mu',\mu'']$ such that the extension   $\mu=\prev(X |H)$ is coherent if and only if $\mu \in [\mu',\mu'']$.
\end{theorem}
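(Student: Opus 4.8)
The plan is to fix the enlarged family $\F'=\F\cup\{X|H\}$ and to study the set $\Pi=\{\mu:\,(\mu_1,\ldots,\mu_n,\mu)\text{ is coherent on }\F'\}$, showing that it is a nonempty, bounded, closed and convex subset of $\mathbb{R}$, hence a closed interval $[\mu',\mu'']$. First I would dispose of boundedness: since $X|H$ is a finite bounded conditional random quantity, when $H$ is true $X$ takes finitely many values, say $x_{1}\leq\cdots\leq x_{r}$; because coherence of the whole assessment forces coherence of each of its restrictions (solvability of $(\Sigma_{J})$ for every $J\subseteq\{1,\ldots,n+1\}$, in particular for $J=\{n+1\}$), the remark following Definition \ref{COER-RQ} gives $x_{1}\leq\mu\leq x_{r}$ for every $\mu\in\Pi$, so that $\Pi\subseteq[x_{1},x_{r}]$.

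Next I would analyze the solvability of the enlarged system $(\Sigma')$, which by Theorem \ref{CNES-PREV-I_0-INT} is the first of the two coherence conditions. Let $C_{1},\ldots,C_{M}$ be the constituents of $\F'$ contained in $\H_{n}\vee H$ and let $v_{h}$ denote the value of $X$ on $C_{h}$ when $C_{h}\subseteq H$. Writing out the $(n+1)$-th equation of $(\Sigma')$ and using $q_{h,n+1}=\mu$ for $C_{h}\subseteq\no{H}$, that equation collapses to $\sum_{h:C_{h}\subseteq H}\lambda_{h}(v_{h}-\mu)=0$; hence, whenever a solution $\Lambda$ has $\sum_{h:C_{h}\subseteq H}\lambda_{h}>0$, the admissible $\mu$ equals the weighted average $\frac{\sum_{h:C_{h}\subseteq H}\lambda_{h}v_{h}}{\sum_{h:C_{h}\subseteq H}\lambda_{h}}$, while the first $n$ equations together with normalization constrain $\Lambda$ to a fixed compact polytope determined by the coherent $\M$ on $\F$ (nonempty since $\M\in\I$). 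The set of such weighted averages is the image of a convex set under a linear-fractional map with positive denominator: explicitly, for two values realized by $\Lambda_{a},\Lambda_{b}$, the value produced by $t\Lambda_{a}+(1-t)\Lambda_{b}$ is a monotone (Möbius) function of $t\in[0,1]$ sweeping out every intermediate value. This already shows that the $\mu$ making $(\Sigma')$ solvable form an interval, closed and bounded because the polytope is compact and all constraints are weak inequalities.

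The hard part will be condition (ii) of Theorem \ref{CNES-PREV-I_0-INT}, i.e. the case $I_{0}\neq\emptyset$, in which solvability of $(\Sigma')$ is necessary but not sufficient and one must also require $\M'_{0}$ coherent on $\F'_{0}$. The obstacle is twofold: the index set $I_{0}$ and the reduced family $\F'_{0}$ may depend on $\mu$, and when $H$ falls into the zero-probability layer the average above degenerates, since the denominator $\sum_{h:C_{h}\subseteq H}\lambda_{h}$ is then forced to $0$. To handle this I would argue by induction on the number of conditioning layers: the reduced problem of extending $\M_{0}$ by $X|H$ on $\F'_{0}$ has strictly fewer layers, so by the inductive hypothesis its coherent values of $\mu$ form a closed interval; since $I_{0}$ stabilizes once $\mu$ is restricted to the range where $(\Sigma')$ is solvable, intersecting this interval with the one from the previous step keeps $\Pi$ a closed interval. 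Finally, nonemptiness of $\Pi$ is the existence part: starting from the coherent $\M$ one builds, layer by layer, a solution of $(\Sigma')$ together with a coherent $\M'_{0}$, yielding at least one coherent $\mu$, which is precisely the content guaranteed by the cited extension results \cite{Holz85,Rega85,Will75}. Combining the four properties, $\Pi$ is a nonempty closed bounded convex subset of $\mathbb{R}$, hence a closed interval $[\mu',\mu'']$, and $\mu=\prev(X|H)$ is a coherent extension if and only if $\mu\in[\mu',\mu'']$.
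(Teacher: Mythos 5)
First, a point of reference: the paper does not prove Theorem \ref{THM:FUND} at all — it is recalled as a known result with a pointer to \cite{Holz85,Rega85,Will75} — so your proposal must stand as a self-contained proof. Your opening steps are sound: the bound $x_1\leq\mu\leq x_r$, the collapse of the last equation of $(\Sigma')$ to $\sum_{h:C_h\subseteq H}\lambda_h(v_h-\mu)=0$, and the M\"obius argument showing that the set $A_+$ of ratios $f(\Lambda)/g(\Lambda)$, where $f(\Lambda)=\sum_{h:C_h\subseteq H}\lambda_h v_h$ and $g(\Lambda)=\sum_{h:C_h\subseteq H}\lambda_h>0$, is an interval. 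But the conclusion you draw — that the set of $\mu$ making $(\Sigma')$ solvable is a closed bounded interval ``because the polytope is compact'' — fails exactly in the case that matters: if the polytope $S$ contains a point with $g(\Lambda)=0$, the collapsed equation reads $0=0$, so $(\Sigma')$ is solvable for \emph{every} $\mu\in\mathbb{R}$ (unbounded), and $A_+$, being the image of the non-compact set $\{\Lambda\in S:g(\Lambda)>0\}$, need not be closed. Compactness gives closedness only when $g>0$ on all of $S$, i.e.\ when the degenerate case is absent.

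The genuine gap is your treatment of that degenerate case. By Theorem \ref{CNES-PREV-I_0-INT}, coherence of $(\M,\mu)$ splits as follows: either $\mu\in A_+$ (then $n+1\notin I_0$, $I_0\subseteq\{1,\ldots,n\}$, and condition $(ii)$ involves only sub-vectors of the coherent $\M$, so $\mu$ is coherent), or $\mu\notin A_+$ and coherence is equivalent to $\mu\in B$, where $B$ is the coherent-extension set of the reduced pair $(\F_0,\M_0)$ determined by the $\mu$-independent solution set $\{\Lambda\in S:g(\Lambda)=0\}$. Hence $\Pi=A_+\cup B$: the correct operation is a \emph{union}, not the intersection you prescribe. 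Read as $A_+\cap B$, your rule is simply false: take $\F=\{\no{E}|H\}$ with $P(\no{E}|H)=1$ and let the further conditional event be $E|HK$, with $E$ logically independent of $H,K$; then $A_+=\{0\}$ while $\Pi=B=[0,1]$, so intersecting discards almost all coherent values. Read as $(\mbox{solvability set})\cap B=\mathbb{R}\cap B=B$, the answer is right, but only because of the inclusion $A_+\subseteq B$, which is precisely the nontrivial point and is nowhere argued: one must take $\Lambda\in S$ with $g(\Lambda)>0$ and $f(\Lambda)/g(\Lambda)=\mu$, push it forward to the constituents generated by $\F_0$, restrict it to $\bigvee_{i\in I_0}H_i$ and renormalize (legitimate because the equations $\sum_{h:C_h\subseteq H_i}\lambda_h(\,\cdot\,-\mu_i)=0$ are homogeneous in $\Lambda$), obtaining a solution of the reduced system with positive mass on $H$, so that the reduced condition $(ii)$ again involves only sub-vectors of $\M$. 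Without this step, neither the convexity nor the closedness of $\Pi$ is established. Three further imprecisions: your stabilization claim is inexact ($I_0$ is constant exactly on the complement of $A_+$, not on the solvable range, which is all of $\mathbb{R}$ here); the induction needs the missing observation that $I_0\subsetneq\{1,\ldots,n+1\}$ (every constituent in $\H_{n+1}$ lies in some $H_i$, so not all the maxima $M_i$ can vanish), which is what guarantees that the reduced base family has at most $n-1$ elements — ``number of conditioning layers'' is not a notion the paper defines, and its strict decrease is not self-evident; and invoking \cite{Holz85,Rega85,Will75} for nonemptiness is circular in a proof of this very theorem, whereas nonemptiness should fall out of the same induction.
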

\subsection{A deepening on  conditional random quantities}\label{SEC:DEEP}
 The indicator of a  conditional event $E|H$ (denoted by the same symbol),  with 
$P(E|H)=x$,  is  defined as 
 
\begin{equation}\label{EQ:AgH}
E|H=
EH+x \widebar{H}=EH+x (1-H)=\left\{\begin{array}{ll}
1, &\mbox{if $EH$ is true,}\\
0, &\mbox{if $\no{E}H$ is true,}\\
x, &\mbox{if $\no{H}$ is true.}\\
\end{array}
\right.
\end{equation} 
Of course, the third  value of the random quantity  $E|H$  (subjectively) depends on the assessed probability  $P(E|H)=x$.
Notice that,  when $H\subseteq E$ (i.e., $EH=H$), by coherence $P(E|H)=1$ and hence for the indicator it holds that $E|H=H+P(E|H)\no{H}=1$. 
Moreover, when $EH=\emptyset$,
 by coherence $P(E|H)=0$ and hence  $E|H=EH+P(E|H)\no{H}=0$.  The negation of a conditional event $E|H$ is defined as  $\no{E|H}=\no{E}|H
=1-E|H$.  We recall that,  in the subjective approach to  probability,  if you assess   $\pr(X|H)=\mu$, then  you agree to pay $\mu$ by knowing that you will receive the amount $XH+\mu\no{H}$, which  
coincides with $X$, if $H$ is true, or with $\mu$, if $H$ is false  (bet  called off). 
Usually, in literature  the conditional random quantity $X|H$ is  defined as the {\em restriction} of $X$ to $H$, which coincides with  $X$, when $H$ is true, and it  is undefined when $H$ is false. Under this point of view, (when $H$ is false) $X|H$ does not coincide with $XH+\mu\no{H}$. However, by
coherence,  it holds that 
\begin{equation}\label{EQ:PREVXgH}
\prev(XH+\mu\no{H})=\prev(XH)+\mu P(\no{H})=\prev(X|H)P(H)+\mu P(\no{H})=\mu P(H)+\mu P(\no{H})=\mu.
\end{equation}
Therefore,  once  a coherent assessment $\mu=\prev(X|H)$ is specified,
 we can extend the notion of $X|H$, by  defining  its value as equal to $\mu$ when  $H$ is false (for further details see \cite{GiSa14}). 
 Then, denoting by $x_1,\ldots, x_r$ the possible values of $X$ when $H$ is true, it holds that  
\begin{equation}\label{EQ:XgH}
X|H=XH+\mu \widebar{H}\in\{x_1,\ldots,x_r,\mu\}.
\end{equation}
By (\ref{EQ:PREVXgH}) the prevision of the extended notion of $X|H$, as defined in (\ref{EQ:XgH}),   coincides with the conditional prevision $\mu=\prev(X|H)$ where  $X|H$ is looked at  as the restriction of $X$ to $H$. 
From (\ref{EQ:XgH})
$X|H$  can be interpreted as the amount that you receive when you pay its prevision $\mu$.  Then, 
the random gain $G$ can be also represented as $G=s(X|H - \mu)$.
In particular,  when $X$ is (the indicator of) an event $E$, we obtain 
$E|H=EH+P(E|H)\no{H}$, that is formula (\ref{EQ:AgH}).  Moreover,  the prevision  $\prev(E|H)$ of (the conditional random quantity) $E|H$  coincides with the conditional probability $P(E|H)$.
For related discussions, see also \cite{CoSc99,GiSa13c,lad96}.
\begin{remark}\label{REM:POINTQ}
	Given a prevision assessment $\M=(\mu_1,\ldots,\mu_n)$ on a family of $n$ conditional random quantities $\{X_1|H_1,\ldots,X_n|H_n\}$, based on 
	(\ref{EQ:XgH}) we observe that for each constituent $C_h$ the corresponding 	point $Q_h$ represents the value assumed by  the random vector $(X_1|H_1,\ldots,X_n|H_n)$ when $C_h$ is true. In particular, when $C_0$ is true the value of the random vector is  the prevision point $\M$.
\end{remark}	
\subsection{Conjunction and disjunction of conditional events}
We recall now  the notion of conjoined  conditional  which was introduced  in the framework of conditional  random quantities (\cite{GiSa13c,GiSa13a,GiSa14,GiSa19}).
Given a coherent probability assessment $(x,y)$ on $\{A|H,B|K\}$ we consider the random quantity $AHBK+x\no{H}BK+y\no{K}AH$ and we set $\prev[(AHBK+x\no{H}BK+y\no{K}AH)|(H\vee K)]=z$. Then  we define the  conjunction $(A|H)\wedge(B|K)$ as follows:
\begin{definition}\label{CONJUNCTION}{\rm Given a coherent prevision assessment 
		$P(A|H)=x$, $P(B|K)=y$, and $\prev[(AHBK+x\no{H}BK+y\no{K}AH)|(H\vee K)]=z$, the conjunction
		$(A|H)\wedge(B|K)$ is the conditional random quantity  defined as
		\begin{equation}\label{EQ:CONJUNCTION}
		\begin{array}{ll}
		(A|H)\wedge(B|K)=(AHBK+x\no{H}BK+y\no{K}AH)|(H\vee K) =\\
		=(AHBK+x\no{H}BK+y\no{K}AH)(H\vee K)+z\no{H}\,\no{K}=
		\left\{\begin{array}{ll}
		1, &\mbox{if $AHBK$ is true,}\\
		0, &\mbox{if $\no{A}H\vee \no{B}K$ is true,}\\
		x, &\mbox{if $\no{H}BK$ is true,}\\
		y, &\mbox{if $AH\no{K}$ is true,}\\
		z, &\mbox{if $\no{H}\no{K}$ is true}.
		\end{array}
		\right.
		\end{array}
		\end{equation}
}\end{definition}
Of course, by recalling (\ref{EQ:PREVXgH}), it holds that  $\prev[(A|H)\wedge(B|K)]=z$.
Notice that in  (\ref{EQ:CONJUNCTION})  the conjunction is represented as $X|H$ is in (\ref{EQ:XgH}) and, once  the (coherent) assessment $(x,y,z)$ is given, the  
conjunction $(A|H)\wedge (B|K)$
is (subjectively) determined. Conversely, each given conjunction uniquely determines  a  coherent assessment $(x,y,z)$. 
We recall that, in betting terms,  $z$ represents the amount you agree to pay, with the proviso that you will receive the quantity
$
(A|H)\wedge(B|K)=AHBK+x\no{H}BK+y\no{K}AH+z\no{H}\no{K},
$
which assumes one of the following values:
	 $1$, if both conditional events are true;
	$0$, if at least one of the conditional events is false; the probability of  the conditional event that is void, if one conditional event is void  and the other one is true;
	 the payed amount $z$, if both 
	conditional events are void.
We recall that $A|H=B|K$ amounts to $AH=BK$ and $H=K$. Thus, when $A|H=B|K$ it holds that
$(AHBK + x \no{H}BK + yAH\no{K} )|(H\vee K)=AH|H=A|H$, that is $(A|H)\wedge(A|H)=A|H$. Moreover the conjunction is commutative, that is  $(A|H) \wedge (B|K) = (B|K) \wedge (A|H)$.
The  next result   shows that the Fr\'echet-Hoeffding bounds still hold for the conjunction of two conditional events (\cite[Theorem~7]{GiSa14}).
\begin{theorem}\label{THM:FRECHET}{\rm
		Given any coherent assessment $(x,y)$ on $\{A|H, B|K\}$, with $A,H,B,K$ logically independent, and with $H \neq \emptyset, K \neq \emptyset$, the extension $z = \mathbb{P}[(A|H) \wedge (B|K)]$ is coherent if and only if   the following  Fr\'echet-Hoeffding bounds are satisfied:
		\begin{equation}\label{LOW-UPPER}
		\max\{x+y-1,0\} =\;z' \leq \; z \; \leq \; z''=\; \min\{x,y\} \,.
		\end{equation}
}\end{theorem}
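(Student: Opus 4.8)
The plan is to reduce the coherence of the extension $z=\prev[(A|H)\wedge(B|K)]$ to the solvability of the system $(\Sigma)$ of (\ref{SYST-SIGMA}) together with the $I_0$-condition of Theorem \ref{CNES-PREV-I_0-INT}, for the family $\F=\{A|H,\,B|K,\,(A|H)\wedge(B|K)\}$ carrying the assessment $(x,y,z)$. First I would list the constituents: since $A,H,B,K$ are logically independent, those contained in $\H=H\vee K$ are the eight events $AHBK,\ AH\no{B}K,\ \no{A}HBK,\ \no{A}H\no{B}K,\ AH\no{K},\ \no{A}H\no{K},\ \no{H}BK,\ \no{H}\no{B}K$, together with $C_0=\no{H}\,\no{K}$. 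Reading the three coordinates (for $A|H$, $B|K$ and the conjunction) off (\ref{EQ:CONJUNCTION}), the associated points $Q_h=(q_{h1},q_{h2},q_{h3})$ are, in the same order, $(1,1,1),\,(1,0,0),\,(0,1,0),\,(0,0,0),\,(1,y,y),\,(0,y,0),\,(x,1,x),\,(x,0,0)$, while $Q_0=(x,y,z)=\M$.

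For the \emph{necessity} of the bounds I would avoid solving $(\Sigma)$ and instead exploit pointwise inequalities among the coordinates of these eight points. A direct inspection shows that, for every $h$, one has $0\le q_{h3}\le q_{h1}$, $q_{h3}\le q_{h2}$, and $q_{h3}\ge q_{h1}+q_{h2}-1$. Hence, for \emph{any} solution $\Lambda=(\lambda_1,\dots,\lambda_8)$ of $(\Sigma)$, multiplying each inequality by $\lambda_h\ge 0$, summing, and using $\sum_h\lambda_h q_{h1}=x$, $\sum_h\lambda_h q_{h2}=y$ and $\sum_h\lambda_h=1$, yields $0\le z\le x$, $z\le y$, and $z\ge x+y-1$. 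Since solvability of $(\Sigma)$ is necessary for coherence (Theorem \ref{ALT1}), every coherent $z$ satisfies $\max\{x+y-1,0\}\le z\le\min\{x,y\}$.

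For \emph{sufficiency} I would invoke the extension theorem (Theorem \ref{THM:FUND}): the coherent extensions $z$ of the coherent assessment $(x,y)$ form a closed interval $[\mu',\mu'']$, which by the previous paragraph satisfies $[\mu',\mu'']\subseteq[z',z'']$. It therefore suffices to prove that the two endpoint assessments $(x,y,z')$ and $(x,y,z'')$ are themselves coherent; the interval property then forces $[z',z'']\subseteq[\mu',\mu'']$, and equality follows. To certify coherence at an endpoint I would exhibit an explicit nonnegative solution $\Lambda$ of $(\Sigma)$ and then check the $I_0$-condition of (\ref{EQ:I0}). The only conditioning events here are $H$, $K$ and $H\vee K$; the last always carries full mass, so the conjunction index never lies in $I_0$, and it remains to arrange the solution so that $\sum_{C_h\subseteq H}\lambda_h=\lambda_1+\cdots+\lambda_6>0$ and $\sum_{C_h\subseteq K}\lambda_h=\lambda_1+\lambda_2+\lambda_3+\lambda_4+\lambda_7+\lambda_8>0$, which gives $I_0=\emptyset$ and hence coherence by Theorem \ref{CNES-PREV-I_0-INT}.

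The hard part will be the construction of these boundary solutions and the bookkeeping of degenerate configurations. For generic interior $(x,y)$ one writes convex combinations realizing $z=z'$ and $z=z''$: for the upper value $z''=\min\{x,y\}$ the mass is pushed onto the constituents where the conjunction attains a conjunct value, essentially $AHBK$, $AH\no{K}$, $\no{H}BK$, whereas for $z'=\max\{x+y-1,0\}$ it is shifted toward the $0$-valued constituents such as $\no{A}H\no{B}K$, $AH\no{B}K$, $\no{A}HBK$. When $x$ or $y$ lies in $\{0,1\}$ several of the $Q_h$ coincide and some masses vanish, exactly as in Example \ref{EX:AHK}; these cases must be treated separately, confirming either $I_0=\emptyset$ or, should $I_0\neq\emptyset$, that the reduced assessment $\M_0$ is coherent — which holds because any sub-assessment of $(x,y)$ with entries in $[0,1]$ is coherent. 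Finally, nonemptiness of the interval, i.e.\ $\max\{x+y-1,0\}\le\min\{x,y\}$ for $(x,y)\in[0,1]^2$, is immediate, and logical independence is precisely what guarantees that all eight constituents are present so that the full range of $z$ is attainable, making the bounds sharp.
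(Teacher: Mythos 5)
Your overall architecture is sound and matches the machinery the paper itself uses for the $n$-event generalization (Theorem \ref{THM:FRECHETn}): reduce to the system $(\Sigma)$ of (\ref{SYST-SIGMA}), certify the two endpoint assessments, check $I_0$, and conclude with the interval property of Theorem \ref{THM:FUND}. Your necessity argument is complete and correct, and it is in fact a nicer, self-contained route than the paper's: the paper gets necessity by invoking the general necessary condition of Theorem \ref{THM:TEOREMAAI13} (recalled from earlier work), whereas you observe that the three half-space inequalities $0\le q_{h3}\le q_{h1}$, $q_{h3}\le q_{h2}$, $q_{h3}\ge q_{h1}+q_{h2}-1$ hold at all eight points $Q_h$ and therefore on their convex hull, which contains $(x,y,z)$ whenever $(\Sigma)$ is solvable.

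The genuine gap is in the sufficiency half, which is the actual content of sharpness: you never exhibit the solutions $\Lambda$ of $(\Sigma)$ at $(x,y,z')$ and $(x,y,z'')$, and you explicitly defer this as ``the hard part.'' Moreover, the hint you give for the upper endpoint would fail as stated: a convex combination of $AHBK=(1,1,1)$, $AH\no{K}=(1,y,y)$, $\no{H}BK=(x,1,x)$ alone cannot equal $(x,y,\min\{x,y\})$ for interior $(x,y)$ --- matching the first coordinate forces all mass onto $(x,1,x)$, whose second coordinate is $1\neq y$ --- so zero-valued constituents must carry positive mass, contrary to your description. The clean completion (the one appearing in the $n=2$ base case of the paper's proof of Theorem \ref{THM:FRECHETn}) supports $\Lambda$ entirely on the four constituents inside $HK$: for $z''=\min\{x,y\}$ with $x\le y$ take weights $x,\;y-x,\;1-y$ on $(1,1,1),(0,1,0),(0,0,0)$; for $z'=\max\{x+y-1,0\}$ take weights $z',\;y-z',\;x-z',\;1-x-y+z'$ on $(1,1,1),(0,1,0),(1,0,0),(0,0,0)$. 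These are nonnegative for every $(x,y)\in[0,1]^2$, they solve $(\Sigma)$, and since all mass lies in $HK$ one gets $\sum_{C_h\subseteq H}\lambda_h=\sum_{C_h\subseteq K}\lambda_h=1>0$, hence $I_0=\emptyset$ and coherence by Theorem \ref{CNES-PREV-I_0-INT}, with no degenerate-case bookkeeping at all. Until such an explicit construction is inserted, the proposal proves only the ``only if'' direction of the theorem.
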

From Definition \ref{CONJUNCTION} and Theorem \ref{THM:FRECHET}, it holds that 
\begin{equation}\label{EQ:DOPPIADIS}
\max\{A|H+B|K-1,0\}\leq 
(A|H)\wedge(B|K) \leq\min\{ A|H,B|K\}.
\end{equation}
\begin{remark} \label{REM:TETRAHEDRON}
	We observe that, by logical independence, the assessment $(x,y)$ on $\{A|H,B|K\}$ is coherent for every $(x,y)\in[0,1]^2$. 
Then, from  Theorems \ref{THM:FUND} and \ref{THM:FRECHET}  the set $\Pi$ of coherent prevision assessments $(x,y,z)$ on $\{A|H,B|K,(A|H)\wedge(B|K)\}$ is 
	\begin{equation}\label{EQ:PI2}
	\Pi=\{(x,y,z): (x,y)\in[0,1]^2,\; \max\{x+y-1,0\}  \leq z\leq \min\{x,y\}
	\}.
	\end{equation}
	The set $\Pi$ is the tetrahedron with vertices the points $(1,1,1), (1,0,0), (0,1,0), (0,0,0)$.	Notice that, the assumption of logical independence plays a key role for the validity of Theorem    \ref{THM:FRECHET}. Indeed, in case of some logical dependencies, for the interval 	 $[z',z'']$ of coherent extensions $z$ it holds that $\max\{x+y-1,0\}\leq z'\leq z''\leq\min\{x,y\}$.
	For instance, when $H=K$ and $AB=\emptyset$, the coherence of the assessment $(x,y)$ on $\{A|H,B|H\}$ is equivalent to the condition  $x+y-1\leq 0$. In this case, it holds that $(A|H)\wedge (B|H)=AB|H$ with $P(AB|H)=0$;  then, 
	the unique coherent extension on $AB|H$ is $z=0$.
	As another example, in the case $A=B$, with $A,H,K$    logically independent, it holds that the assessment $(x,y)$ on $\{A|H,A|K\}$ is coherent for every $(x,y)\in[0,1]^2$. Moreover, as it will be shown by Theorem \ref{THM:A=B},  
	the extension $z$ is coherent if and only if $xy\leq  z\leq \min\{x,y\}$ . Finally, we remark that in all cases,  for each coherent extension $z$, it holds that  $z\in [z',z'']\subseteq  [0,1]$; thus $(A|H) \wedge (B|K)\in [0,1]$.
\end{remark} 
Other approaches to compounded conditionals, which are not based on coherence, can be found in \cite{Cala17,FlGH20,Kauf09,mcgee89}. A study of the lower and upper bounds for other definitions of conjunction, where the conjunction is a conditional event like Adams' quasi conjunction,
has been given in \cite{SUM2018S}.

We recall now  the notion of disjoined  conditional.
Given a coherent probability assessment $(x,y)$ on $\{A|H,B|K\}$ we consider the random quantity $(AH\vee BK)+x\no{H}\no{B}K+y\no{K}\no{A}H$ and we set $\prev[((AH\vee BK)+x\no{H}\no{B}K+y\no{K}\no{A}H)|(H\vee K)]=w$. Then  we define the  disjunction $(A|H)\vee(B|K)$ as follows:
\begin{definition}\label{DISJUNCTION}{\rm Given a coherent prevision assessment 
		$P(A|H)=x$, $P(B|K)=y$, and $\prev[((AH\vee BK)+x\no{H}\no{B}K+y\no{K}\no{A}H)|(H\vee K)]=w$, the disjunction
		$(A|H)\vee (B|K)$ is the conditional random quantity  
		\begin{equation}\label{EQ:DISJUNCTION}
		\begin{array}{ll}
		(A|H)\vee(B|K)=((AH\vee BK)+x\no{H}\no{B}K+y\no{K}\no{A}H)|(H\vee K) =\\
		=((AH\vee BK)+x\no{H}\no{B}K+y\no{K}\no{A}H)(H\vee K)+w\no{H}\,\no{K}=
		\left\{\begin{array}{ll}
		1, &\mbox{if $AH\vee BK$ is true,}\\
		0, &\mbox{if $\no{A}H\no{B}K$ is true,}\\
		x, &\mbox{if $\no{H}\no{B}K$ is true,}\\
		y, &\mbox{if $\no{A}H\no{K}$ is true,}\\
		w, &\mbox{if $\no{H}\no{K}$ is true}.
		\end{array}
		\right.
		\end{array}
		\end{equation}
}\end{definition}
We recall the notion of conjunction of $n$ conditional events (\cite{GiSa19}).
\begin{definition}\label{DEF:CONGn}	Let  $n$ conditional events $E_1|H_1,\ldots,E_n|H_n$ be given.
	For each  non-empty strict subset $S$  of $\{1,\ldots,n\}$,  let $x_{S}$ be a prevision assessment on $\bigwedge_{i\in S} (E_i|H_i)$.
	Then, the conjunction  $(E_1|H_1) \wedge \cdots \wedge (E_n|H_n)$ is the conditional random quantity $\C_{1\cdots n}$ defined as
	\begin{equation}\label{EQ:CF}
	\begin{array}{lll}
	\C_{1\cdots n}=
	[\bigwedge_{i=1}^n E_iH_i+\sum_{\emptyset \neq S\subset \{1,2\ldots,n\}}x_{S}(\bigwedge_{i\in S} \no{H}_i)\wedge(\bigwedge_{i\notin S} E_i{H}_i)]|(\bigvee_{i=1}^n H_i)=
	\\
	=\left\{
	\begin{array}{llll}
	1, &\mbox{ if } \bigwedge_{i=1}^n E_iH_i\, \mbox{ is true,} \\
	0, &\mbox{ if } \bigvee_{i=1}^n \no{E}_iH_i\, \mbox{ is true}, \\
	x_{S}, &\mbox{ if } (\bigwedge_{i\in S} \no{H}_i)\wedge(\bigwedge_{i\notin S} E_i{H}_i)\, \mbox{ is true}, \; \emptyset \neq S\subset \{1,2\ldots,n\},\\
	x_{1\cdots n}, &\mbox{ if } \bigwedge_{i=1}^n \no{H}_i \mbox{ is true},
	\end{array}
	\right.
	\end{array}
	\end{equation}	
\end{definition}
where 
\[
\begin{array}{l}
x_{1\cdots n}=x_{\{1,\ldots, n\}}=\prev(\C_{1\cdots n})=\prev[(\bigwedge_{i=1}^n E_iH_i+\sum_{\emptyset \neq S\subset \{1,2\ldots,n\}}x_{S}(\bigwedge_{i\in S} \no{H}_i)\wedge(\bigwedge_{i\notin S} E_i{H}_i))|(\bigvee_{i=1}^n H_i)].
\end{array}
\]
Of course,  we obtain  $\C_1=E_1|H_1$, when $n=1$.  In  Definition \ref{DEF:CONGn}  each possible value $x_S$ of $\C_{1\cdots n}$,  $\emptyset\neq  S\subset \{1,\ldots,n\}$, is evaluated  when defining (in a previous step) the conjunction $\C_{S}=\bigwedge_{i\in S} (E_i|H_i)$. 
Then, after the conditional prevision $x_{1\cdots n}$ is evaluated, $\C_{1\cdots n}$ is completely specified. Of course, 
we require coherence for  the prevision assessment $(x_{S}, \emptyset\neq  S\subseteq \{1,\ldots,n\})$, so that $\C_{1\cdots n}\in[0,1]$.
In the framework of the betting scheme,  $x_{1\cdots n}$ is the amount  that you agree to pay  with the proviso that you will receive:
\begin{itemize}
		\vspace{-0.2cm}
	\item $1$, if all conditional events are true;
		\vspace{-0.3cm}
	\item	$0$, if at least one of the conditional events is false;
		\vspace{-0.3cm}
	\item the prevision of the conjunction of that conditional events which are void,  otherwise. In particular you receive back $x_{1\cdots n}$ when all  conditional events are void.
\end{itemize}
The operation of conjunction is associative and commutative (\cite[Proposition 1]{GiSa19}).  We recall below  a  necessary condition  of coherence related with the Fr\'echet-Hoeffding bounds (\cite[Theorem 13]{GiSa19}).
\begin{theorem}\label{THM:TEOREMAAI13}
Let $(x_1,\ldots, x_n,x_{1\cdots n})$ be a coherent prevision assessment on the family $\{E_1|H_1,\ldots,E_n|H_n,\C_{1\cdots n}\}$.  Then,
	$
	\max\{\sum_{i=1}^nx_i-(n-1),0\}
	\,\,\leq \,\, x_{1\cdots n} \,\,\leq\,\, \min\{x_1,\ldots ,x_n\}.
	$
\end{theorem}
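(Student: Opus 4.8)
The plan is to prove the two bounds by induction on $n$, first establishing the corresponding \emph{pointwise} inequalities between $\C_{1\cdots n}$ and the conditional events $E_i|H_i$ (read as the extended random quantities $E_iH_i+x_i\no{H}_i$ of (\ref{EQ:XgH})), and then transferring them to the prevision point by exploiting that coherence forces $\M=(x_1,\ldots,x_n,x_{1\cdots n})$ to lie in the convex hull $\I$ of the constituent points.

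First I would fix the constituents $C_1,\ldots,C_m$ of the family $\{E_1|H_1,\ldots,E_n|H_n,\C_{1\cdots n}\}$ contained in $\H_n=H_1\vee\cdots\vee H_n$, and denote by $q_{hi}$ and $q_{h,n+1}$ the values of $E_i|H_i$ and of $\C_{1\cdots n}$ at $C_h$ (Remark \ref{REM:POINTQ}). The core claim is the pointwise bound
\[
\max\Bigl\{\sum_{i=1}^{n} q_{hi}-(n-1),\,0\Bigr\}\ \le\ q_{h,n+1}\ \le\ \min\{q_{h1},\ldots,q_{hn}\},\qquad h=1,\ldots,m .
\]
Using the representation (\ref{EQ:CF}) I would verify this constituent by constituent. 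When $\bigwedge_i E_iH_i$ is true, every entry equals $1$; when some $\no{E}_jH_j$ is true, then $q_{h,n+1}=0$ while $q_{hj}=0$ and all entries lie in $[0,1]$; in both situations the inequality is immediate (and these two cases already cover every constituent in $\H_n$ that is not a pure void pattern).

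The only substantial case is a void pattern: $\no{H}_i$ holds for $i$ in some proper nonempty $S\subset\{1,\ldots,n\}$ and $E_iH_i$ is true for $i\notin S$, so that $q_{h,n+1}=x_S$, while $q_{hi}=x_i$ for $i\in S$ and $q_{hi}=1$ for $i\notin S$. Here $\min_i q_{hi}=\min_{i\in S}x_i$ and $\sum_i q_{hi}-(n-1)=\sum_{i\in S}x_i-(|S|-1)$, so the pointwise claim reduces exactly to the Fr\'echet--Hoeffding bounds for the sub-conjunction $\C_S=\bigwedge_{i\in S}(E_i|H_i)$, namely $\max\{\sum_{i\in S}x_i-(|S|-1),0\}\le x_S\le\min_{i\in S}x_i$. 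Since $|S|<n$ and $(x_i:i\in S,x_S)$ is coherent (as required for $\C_{1\cdots n}$ to be well defined in Definition \ref{DEF:CONGn}), this holds by the inductive hypothesis; for $|S|=1$ it is trivial because then $x_S=x_i$. The base case $n=1$ is the identity $\C_1=E_1|H_1$, so the induction is self-contained.

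Finally I would pass from constituents to the prevision point. Coherence of $\M$ makes the system $(\Sigma)$ in (\ref{SYST-SIGMA}) solvable, i.e. $\M\in\I$, so there are $\lambda_h\ge0$ with $\sum_h\lambda_h=1$ and $x_i=\sum_h\lambda_h q_{hi}$, $x_{1\cdots n}=\sum_h\lambda_h q_{h,n+1}$, all with the \emph{same} weights. Multiplying the pointwise inequalities by $\lambda_h$ and summing, the upper bound yields $x_{1\cdots n}\le\sum_h\lambda_h q_{hi}=x_i$ for every $i$, hence $x_{1\cdots n}\le\min\{x_1,\ldots,x_n\}$; the lower bound yields $x_{1\cdots n}\ge\sum_{i=1}^{n}x_i-(n-1)$ together with $x_{1\cdots n}\ge0$, hence the claimed lower bound. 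The main obstacle is precisely the void-pattern case: there the bound is not a numerical triviality but is itself a Fr\'echet--Hoeffding statement about a lower-order conjunction $\C_S$, which is what forces the induction on $n$ and requires one to invoke the coherence of the $x_S$ fixed in the construction of $\C_{1\cdots n}$; once this is in place the convexity step is routine.
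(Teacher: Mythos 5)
Your proof is correct, but be aware that there is nothing in this paper to compare it against: Theorem \ref{THM:TEOREMAAI13} is recalled here without proof, as a citation of Theorem 13 of \cite{GiSa19}, so your argument has to be judged on its own terms. Judged that way, it is sound and stays entirely within the machinery the paper does provide. Your constituent-by-constituent case analysis is exhaustive (all $E_iH_i$ true; some $\no{E}_jH_j$ true; a void pattern indexed by a proper nonempty $S\subset\{1,\ldots,n\}$), and you correctly identify that the only substantive case is the void pattern, where $\C_{1\cdots n}$ takes the value $x_S$ while the $E_i|H_i$ take the values $x_i$ for $i\in S$ and $1$ for $i\notin S$, so that the pointwise claim becomes exactly the Fr\'echet--Hoeffding statement for the sub-conjunction $\C_S$. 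Invoking the inductive hypothesis there is legitimate for the reason you give: Definition \ref{DEF:CONGn} requires coherence of the whole hierarchy $(x_S,\ \emptyset\neq S\subseteq\{1,\ldots,n\})$ for $\C_{1\cdots n}$ to be well defined, and a sub-assessment of a coherent assessment is coherent, so $(x_i : i\in S,\, x_S)$ is coherent and the theorem for $|S|<n$ applies. The transfer step is also correct: the paper states explicitly that solvability of $(\Sigma)$, i.e.\ $\M\in\I$, is a necessary condition for coherence, and since the same weights $\lambda_h$ represent all $n+1$ components of $\M$, the pointwise inequalities are preserved under the convex combination. Conceptually, your pointwise bound is the $n$-ary analogue of formula (\ref{EQ:DOPPIADIS}); note that for $n=2$ the paper obtains (\ref{EQ:DOPPIADIS}) \emph{from} the prevision bounds of Theorem \ref{THM:FRECHET}, whereas you go in the opposite direction, from pointwise bounds to prevision bounds, which is the right direction for establishing the bounds as a necessary condition of coherence.
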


\subsection{Frank t-norms}
We recall below the notion of t-norm (see \cite{GMMP09,KlMP00,KlMP05}).
\begin{definition}\label{DEF:TNORM}
		A {\em t-norm} is a function $T:[0,1]^2\longrightarrow [0,1]$ which satisfies, 
		for all $x, y, z \in [0,1]$, the following four axioms: 
			$T(x,y) = T(y,x)$ (\emph{commutativity}); 
			$T(x,T(y,z)) = T(T(x,y),z)$ (\emph{associativity});	$T(x,y) \leq T(x,z)$ whenever $y \leq z$ (\emph{monotonicity}); $T(x,1) = x$ (\emph{boundary 
			condition}).
\end{definition}
Some basic t-norms are the \emph{minimum}  $T_M$ (which is the greatest t-norm), the product $T_P$, the \emph{{\L}ukasiewicz} t-norm $T_L$, given below:
\[
\begin{array}{lll} 
T_M(x,y) = \mbox{min}(x,y), & T_P(x,y) = x y\, , & T_L(x,y) = \mbox{max}(x+y-1, 0).
\end{array}
\]
Frank t-norms are a relevant class of t-norms to which  the previous basic ones belong.
The Frank t-norm $T_{\lambda}:[0,1]^2\rightarrow [0,1]$, with parameter $\lambda\in[0,+\infty]$, is defined as
\begin{equation}\label{EQ:FRANK}
T_{\lambda}(x,y)=\left\{\begin{array}{ll}
T_{M}(x,y)=\min\{x,y\}, & \text{ if } \lambda=0,\\
T_{P}(x,y)=xy, & \text{ if } \lambda=1,\\	  
T_{L}(x,y)=\max\{x+y-1,0\}, & \text{ if } \lambda=+\infty,\\	  	
\log_{\lambda}(1+\frac{(\lambda^x-1)(\lambda^y-1)}{\lambda-1}), & \text{ otherwise}.
\end{array}\right.
\end{equation}
We recall that  $T_{\lambda}$ is continuous   with respect to $\lambda$;  
moreover, it is  decreasing  with respect to the  parameter $\lambda$.  
Then, 
for each given $(x,y)\in[0,1]^2$,
 it holds that  $T_{L}(x,y)\leq T_{\lambda}(x,y) \leq T_{M}(x,y)$, for every  $\lambda \in[0,+\infty]$
 (see, e.g., \cite{KlMP00},\cite{KlMP05}).
Frank t-norms  provide a gradual transition between Lukasiewicz  t-norm ($\lambda=+\infty$)
and minimum  t-norm ($\lambda=0$). 
Frank t-norms have been exploited  in \cite{Tasso12} (see also \cite{coletti04}) with the aim of obtaining the coherent values for the membership function of the intersection of two fuzzy subsets. 
Since  t-norms  are associative they can be  extended in a unique way to an $n$-ary operation for arbitrary integer $n\geq  2 $   (see \cite{GMMP09,KlMP05}).

\section{Sharpness  of the  Fr\'echet-Hoeffding bounds for the  conjunction of $n$ conditional events}
\label{SEC:3}
In this section
we show, under logical independence, the sharpness of Fr\'echet-Hoeffding bounds for  the prevision of the conjunction $\C_{1\cdots n}$ and we illustrate some details by considering the case $n=3$.
We also show that the set  of all coherent prevision assessments on $\F=\{E_1|H_1,\ldots, E_n|H_n, \C_{1\cdots n}\}$  is convex.

Let $\M=(x_1,\ldots, x_n,x_{1\cdots n})$ be a prevision assessment on $\F=\{E_1|H_1,\ldots, E_n|H_n, \C_{1\cdots n}\}$, with $E_1,\ldots,E_n,H_1,\ldots,H_n$  logically independent.  In order to determine the constituents generated by the family $\F$ it is enough to consider the constituents  $C_0,C_1,\ldots,C_m$ generated by the family 
$\{E_1|H_1,\ldots, E_n|H_n\}$,  where by logical independence  $m+1=3^{n}$.  Indeed,  each $C_h$ uniquely determines the value of $\C_{1\cdots n}$, that is  for each $h$ there exists a unique $x_S$ such that   $C_h$ logically implies the event $(\C_{1\cdots n}=x_S)$ and hence  $C_h\wedge (\C_{1\cdots n}=x_S)=C_h$. Then, 
$C_0,C_1,\ldots,C_m$ also represent the 
constituents  generated by the family $\F$.
By Remark \ref{REM:POINTQ}, for each $C_h$ the associated  point $Q_h$ represents the value of the random vector $(E_1|H_1,\ldots,E_n|H_n, \C_{1\cdots n})$ when $C_h$ is true. 
The last component 
of $Q_h$ is the value of  $\C_{1\cdots n}$  when $C_h$ is true.  By Definition \ref{DEF:CONGn}, we observe that, when the conditioning events $H_1,\ldots, H_n$ are all true, it holds that $\C_{1\cdots n}\in\{1,0\}$.
In this section we only need to consider the constituents $C_h$'s such that $C_h\subseteq \bigwedge_{i=1}^n H_i$. For these constituents  the associated $Q_h'$s have binary components, where the last component is 
$1$, or $0$, according to whether $C_h=\bigwedge_{i=1}^n E_iH_i$, or
$C_h\subseteq (\bigvee_{i=1}^n\no{E}_i )\wedge (\bigwedge_{i=1}^n H_i)$.
Given any subset $\{i_1,\ldots,i_k\}$ of $\{1,\ldots,n\}$, we set \[
\{i_{k+1},\ldots,i_n\}=\{1,\ldots,n\}\setminus \{i_1,\ldots,i_k\}.
\]
Then, we denote by
\begin{equation}\label{EQ:KAPPA}
\mathcal{K}=\{K_{i_1\cdots i_k 
	\no{i_{k+1}} \cdots \no{i_n}}, \{i_1,\ldots, i_k\}\subseteq \{1,\ldots, n\}\},
\end{equation}
where 
\[ 
K_{i_1\cdots i_k 
	\no{i_{k+1}} \cdots \no{i_n}}=
(\bigwedge_{i\in\{i_1,\ldots,i_k\}}E_iH_i)\wedge (\bigwedge_{i\in\{i_{k+1},\ldots,i_n\}}\no{E}_iH_i)
=E_{i_1}H_{i_1}\cdots E_{i_k}H_{i_k}\no{E}_{i_{k+1}}H_{i_{k+1}}\cdots\no{E}_{i_{n}}H_{i_{n}},
\]
the set of $2^n$ constituents $C_h$'s contained  in $\bigwedge_{i=1}^nH_i$, that is 
$\mathcal{K}=\{ K_{1\cdots {n}},
 K_{1\cdots {n-1}\no{n}}, \ldots, K_{\no{1}\cdots \no{n}}\}$. 
Of course, there are $3^n-2^n$ constituents $C_h$'s  which  logically imply $\bigvee_{i=1}^n \no{H}_i$ and hence do not belong to $\mathcal{K}$.  
For each subset $\{i_1,\ldots,i_k\}\subseteq\{1,\ldots,n\}$, we denote by $Q_{i_1\cdots i_k 
	\no{i_{k+1}} \cdots \no{i_n}}$ the point  associated with the constituent $K_{i_1\cdots i_k 
	\no{i_{k+1}} \cdots \no{i_n}}\in \mathcal{K}$. 
Each point $Q_{i_1\cdots i_k 
	\no{i_{k+1}} \cdots \no{i_n}}$ 
is a $(n+1)$-vector, say $(q_1, \ldots, q_{n+1})$, where 
\begin{equation}\label{EQ:q_j}
q_j=
\begin{cases}
1, \text{ if } j \in\{i_1,\ldots,i_k\},\\ 
0, \text{ if } j \in\{i_{k+1},\ldots,i_n\},\;\;
\end{cases}
q_{n+1}= \begin{cases}
1, \text{ if } k=n,\\
0, \text { if } k<n.\\
\end{cases} 
\end{equation}
Then, from (\ref{EQ:q_j}),  the set of points 
$\{Q_{i_1\cdots i_k 
	\no{i_{k+1}} \cdots \no{i_n}}, \{i_1,\ldots, i_k\}\subseteq \{1,\ldots, n\}\}$, which we also denote by  
$\{Q_{1\cdots {n}}$,  $Q_{1\cdots {n-1}\no{n}}$, $\ldots$, $Q_{\no{1}\cdots \no{n}}\}$, is $\{(1,\ldots,1,1)$, $(1,\ldots1,0,0)$, $\ldots$, $(0,\ldots, 0)\}$. 
We denote by $\I^*$ the convex hull of these $2^n$ points, that is 
\begin{equation}\label{EQ:ISTAR}
\I^*=\{\M: \M=\lambda_{1\cdots {n}}Q_{1\cdots {n}}+\cdots +\lambda_{\no{1}\cdots \no{n}}Q_{\no{1}\cdots \no{n}}; \lambda_{1\cdots {n}}+\cdots +\lambda_{\no{1}\cdots \no{n}}=1;\lambda_{1\cdots {n}}\geq 0,\ldots, \lambda_{\no{1}\cdots \no{n}}\geq 0\}.
\end{equation}
 Moreover, we denote by $(\Sigma_n^*)$ the following system, with $2^n$ unknowns 
$\lambda_{i_1\cdots i_k 
	\no{i_{k+1}} \cdots \no{i_n}}$, 
\begin{equation}\label{EQ:SIGMA*n} 
(\Sigma^*_n) \left\{ 
\begin{array}{ll}
\M= \sum_{\{i_1,\ldots,i_k\}\subseteq \{1,\ldots,n\}}\lambda_{i_1\cdots i_k \no{i_{k+1}} \cdots \no{i_n}}Q_{i_1\cdots i_k 
	\no{i_{k+1}} \cdots \no{i_n}},\\
\sum_{\{i_1,\ldots,i_k\}\subseteq \{1,\ldots,n\}}\lambda_{i_1\cdots i_k \no{i}_{k+1}\cdots \no{i}_{n}}=1,\\
\lambda_{i_1\cdots i_k \no{i_{k+1}} \cdots \no{i_n}}\geq 0, \;\; \forall \{i_1,\ldots,i_k\}\subseteq \{1,\ldots,n\}.
\end{array}
\right.
\end{equation}
which is solvable if and only if   $\M\in \I^*$.
In more explicit terms the system $(\Sigma^*_n)$ becomes
\begin{equation}\label{EQ:SIGMA*RECALLED} 
(\Sigma^*_n) \left\{ 
\begin{array}{ll}
x_{1}
=\sum_{
\{1\}\subseteq	\{i_1,\ldots,i_k\}\subseteq \{1,\ldots,n\}}
\lambda_{i_1\cdots i_k \no{i_{k+1}} \cdots \no{i_n}},\;\;\\
x_{2}
=\sum_{\{2\}\subseteq	\{i_1,\ldots,i_k\}\subseteq \{1,\ldots,n\}}
\lambda_{i_1\cdots i_k \no{i_{k+1}} \cdots \no{i_n}},\;\;\\
\dotfill \\
x_{n}
=\sum_{\{n\}\subseteq	\{i_1,\ldots,i_k\}\subseteq \{1,\ldots,n\}}
\lambda_{i_1\cdots i_k \no{i_{k+1}} \cdots \no{i_n}},\;\;\\
x_{1\cdots n}=\lambda_{1\cdots n},\\
\sum_{\{i_1,\ldots,i_k\}\subseteq \{1,\ldots,n\}}\lambda_{i_1\cdots i_k \no{i_{k+1}}\cdots \no{i_{n}}}=1,\\
\lambda_{i_1\cdots i_k \no{i_{k+1}} \cdots \no{i_n}}\geq 0, \;\; \forall \{i_1,\ldots,i_k\}\subseteq \{1,\ldots,n\}.
\end{array}
\right.
\end{equation}
\begin{remark}\label{REM:CONVHULLn}
	Let $\I$ be the convex hull of all the points $Q_h$'s associated with all the constituents $C_h$'s in $H_1\vee \cdots \vee  H_n$. 
	Of course, for each $C_h\in \mathcal{K}$, it holds that $C_h\subseteq \bigwedge_{i=1}^nH_i\subseteq \bigvee_{i=1}^nH_i$. Then, the convex hull $\I^*$   is a subset of $\I$.
\end{remark}	
\begin{theorem}\label{THM:SIGMASTARn}
	Let $E_1,\ldots,E_n,H_1,\ldots,H_n$ be logically independent events, with $H_1\neq \emptyset$, \ldots, $H_n\neq \emptyset$,  $n\geq 2$.
Moreover, let $\M=(x_1,\ldots,x_n,x_{1\cdots n})$ be a prevision assessment on $\F=\{E_1|H_1,\ldots, E_n|H_n, \C_{1\cdots n}\}$.
	 If $\M\in \I^*$, that is
 $(\Sigma_n^*)$ is solvable, then 
 $\M$ 	is coherent.
\end{theorem}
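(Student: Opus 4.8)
The plan is to derive coherence from the characterization in Theorem~\ref{CNES-PREV-I_0-INT}, which splits coherence of $\M$ on $\F$ into two requirements: solvability of the system $(\Sigma)$ associated with the full family $\F$, and coherence of the restricted assessment $\M_0$ on $\F_0$ whenever $I_0\neq\emptyset$. The pivotal observation I would exploit is that membership of $\M$ in the \emph{smaller} convex hull $\I^*$ forces $I_0=\emptyset$. Once this is established, the second requirement is vacuous and coherence reduces to solvability of $(\Sigma)$ alone, which is immediate.

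First I would settle the solvability requirement. By Remark~\ref{REM:CONVHULLn} the hull $\I^*$ is contained in $\I$, the convex hull of all points $Q_h$ attached to the constituents inside $\H_n=H_1\vee\cdots\vee H_n$. Hence $\M\in\I^*$ gives $\M\in\I$, which is precisely the solvability of $(\Sigma)$, i.e.\ condition $(i)$ of Theorem~\ref{CNES-PREV-I_0-INT}.

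The crux of the proof — and the step I expect to be the main obstacle — is verifying that $I_0=\emptyset$. Since $\M\in\I^*$, the system $(\Sigma^*_n)$ has a solution in the $2^n$ variables indexed by $\mathcal{K}$; padding it with weight $0$ on every constituent $C_h\notin\mathcal{K}$ produces a solution $\Lambda^*$ of the full system $(\Sigma)$ whose entire mass is supported on $\mathcal{K}$. Every such constituent obeys $K_{i_1\cdots i_k\no{i_{k+1}}\cdots\no{i_n}}\subseteq\bigwedge_{j=1}^n H_j$, so it lies in $H_i$ for each $i\in\{1,\ldots,n\}$ and also in the conditioning event $\H_n$ of $\C_{1\cdots n}$. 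Therefore, for every one of the $n+1$ members of $\F$, the relevant sum over $\Lambda^*$ captures the whole mass:
\[
\sum_{h:\,C_h\subseteq H_i}\lambda^*_h=\sum_{h\in\mathcal{K}}\lambda^*_h=1>0,\qquad i=1,\ldots,n,
\]
and likewise $\sum_{h:\,C_h\subseteq\H_n}\lambda^*_h=1>0$ for the last component. As $M_i$ is the maximum of this sum over all solutions of $(\Sigma)$, we get $M_i\geq 1>0$ for every index, so none belongs to $I_0$; that is, $I_0=\emptyset$. Invoking Theorem~\ref{CNES-PREV-I_0-INT} with $I_0=\emptyset$, coherence of $\M$ is equivalent to solvability of $(\Sigma)$, already obtained in the previous step, and the theorem follows.

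The one point needing care is that the prevision values $x_S$ attached to the proper sub-conjunctions, which populate the last coordinate of the points $Q_h$ for constituents \emph{outside} $\mathcal{K}$, play no role here: those constituents carry zero weight in $\Lambda^*$, so both the check that $\Lambda^*$ solves $(\Sigma)$ and the mass computations above involve only the binary points described in (\ref{EQ:q_j}). This is exactly why belonging to $\I^*$ suffices and the intermediate assessments are irrelevant to the conclusion.
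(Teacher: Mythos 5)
Your proof is correct and follows essentially the same route as the paper's: both pass from $\I^*\subseteq\I$ to solvability of $(\Sigma)$, pad the solution of $(\Sigma^*_n)$ with zeros so that all mass sits on constituents contained in $\bigwedge_{i=1}^n H_i$, deduce $I_0=\emptyset$ from $\sum_{h:C_h\subseteq H_i}\lambda_h=1$, and conclude via Theorem~\ref{CNES-PREV-I_0-INT}. Your explicit remark that the last component (conditioned on $\H_n$) and the intermediate prevision values $x_S$ pose no problem is a minor elaboration of the same argument, not a different approach.
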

\begin{proof}
	Let be $\M \in \I^*$, that is $(\Sigma^*_n)$ solvable, with a solution  $\Lambda^*$.
	From Remark \ref{REM:CONVHULLn}, as $\I^*\subseteq \I$, where $\I$ is the convex hull of all the points $Q_h$'s, $h=1,\ldots, m$, 
	it holds that 
	$\mathcal{M}\in \mathcal{I}$. Then,   the system $(\Sigma)$ in (\ref{SYST-SIGMA}) is solvable with a solution $\Lambda=(\lambda_h,h=1,\ldots,m)=(\Lambda^*,{\bf 0})$, that is  $\lambda_{h}=0$ for each $C_h \nsubseteq \bigwedge_{i=1}^n H_i$.
	Moreover, as
	$\sum_{h:C_h\subseteq H_i}\lambda_h= \sum_{h:C_h\subseteq H_1\cdots H_n}\lambda_h=1$, $i=1,\ldots, n$, it holds  that 
	$I_0 = \emptyset$. Thus, as $(\Sigma)$ is solvable and $I_0$ is empty,
	by Theorem  \ref{CNES-PREV-I_0-INT}, the assessment $\M$ is coherent.
\end{proof}	
We recall that a  t-norm $T$, introduced as a binary operator, can be extended as an $n$-ary operator.
For any integer $n\geq 2$ the extension of $T$  is defined
as (\cite{KlMP00})
\[
T(x_1,\ldots,x_n)=\left\{ \begin{array}{ll}
	T(T(x_1,\ldots,x_{n-1}),x_n), & $if $ n>  2,\\
	T(x_1,x_2), & $if $ n=  2.
\end{array}\right.
\]
The Fr\'echet-Hoeffding bounds
are
\[
T_L(x_1,\ldots,x_n) = \max\{\sum_{i=1}^nx_i-n+1, 0\}
,\;\;\;T_M(x_1,\ldots,x_n) = \mbox{min}\{x_1,\ldots,x_n\}.
\]
In the next result (Theorem  \ref{THM:FRECHETn})  we show that, when propagating the probability assessment $\P=(x_1,\ldots,x_n)$, defined on the family of $n$ conditional events $\{E_1|H_1,\ldots,E_n|H_n\}$, to their  conjunction $\C_{1\cdots n}$, under logical independence the prevision assessment $\pr(\C_{1\cdots n})=\mu$ is a coherent extension of $\P$ if and only if $\mu\in[\mu'(x_1,\ldots,x_n),\mu''(x_1,\ldots,x_n)]$, where
\[
\mu'(x_1,\ldots,x_n)=T_L(x_1,\ldots,x_n) \,,\;\; \mu''(x_1,\ldots,x_n) = T_M(x_1,\ldots,x_n) \,.
\]
For the convenience of the reader we sketch the proof. \\
1. We first observe that in general it holds that
\[
T_L(x_1,\ldots,x_n) \leq \mu'(x_1,\ldots,x_n) \leq \mu''(x_1,\ldots,x_n) \leq T_M(x_1,\ldots,x_n)\,.
\]
2. We show that $\mu'(x_1,\ldots,x_n)=T_L(x_1,\ldots,x_n)$, by verifying the coherence of the assessment $(x_1,\ldots,x_n,T_L(x_1,\ldots,x_n))$ on $\{E_1|H_1,\ldots,E_n|H_n,\C_{1\cdots n}\}$. 
Based on Theorem \ref{THM:SIGMASTARn}, we verify the coherence of $(x_1,\ldots,x_n,T_L(x_1,\ldots,x_n))$ by showing  that the associated system $(\Sigma_n^*)$ is solvable,  for each $n$. We proceed by induction. We assume $(\Sigma_n^*)$ solvable and then we verify the  solvability of $(\Sigma_{n+1}^*)$, by separately examining two cases: $(i) \; T_L(x_1,\ldots,x_n)=0$, $(ii) \; T_L(x_1,\ldots,x_n)>0$.  In the case $(i)$ for the assessment $P(E_{n+1}|H_{n+1})=x_{n+1}$ we distinguish three sub-cases: $x_{n+1}=0, x_{n+1}=1, 0<x_{n+1}<1$. In the case $(ii)$ we give an explicit solution of $(\Sigma_n^*)$ and a related solution for $(\Sigma_{n+1}^*)$, by distinguishing two sub-cases which concern $x_{n+1}$: $(ii.a) \;\; 0 \leq x_{n+1} \leq n-\sum_{i=1}^n$ (with three further sub-cases); $(ii.b) \;\; n-\sum_{i=1}^n < x_{n+1} \leq 1$.  \\
3.
 We show that $\mu''(x_1,\ldots,x_n)=T_M(x_1,\ldots,x_n)$. We verify the  coherence of the assessment $(x_1,\ldots,x_n,T_M(x_1,\ldots,x_n))$ on $\{E_1|H_1,\ldots,E_n|H_n,\C_{1\cdots n}\}$, by providing an explicit solution  of $(\Sigma_n^*)$ and by applying Theorem \ref{THM:SIGMASTARn}.
\begin{theorem}\label{THM:FRECHETn}
	Let $E_1,\ldots,E_n,H_1,\ldots,H_n$ be logically independents events, with $H_1\neq \emptyset$, \ldots, $H_n\neq \emptyset$, $n\geq 2$. The set $\Pi$ of all prevision coherent assessments $\M=(x_1,\ldots, x_n,x_{1\cdots n})$ on the family $\F=\{E_1|H_1,\ldots, E_n|H_n, \C_{1\cdots n}\}$ is
	\begin{equation}\label{EQ:SETPIGRECOn}
	\begin{array}{ll}
	\Pi=\{(x_1,\ldots,x_n,x_{1\cdots n}):(x_1,\ldots,x_n)\in[0,1]^n, x_{1\cdots n}\in [T_L(x_1,\ldots,x_{n}),T_M(x_1,\ldots,x_{n})]\}.
	\end{array}	
	\end{equation}
\end{theorem}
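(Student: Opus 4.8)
The plan is to prove the two inclusions of (\ref{EQ:SETPIGRECOn}) separately and to reduce the hard direction to the solvability of the system $(\Sigma^*_n)$ via Theorem \ref{THM:SIGMASTARn}. For the inclusion ``coherent $\Rightarrow \M\in\Pi$'' nothing new is needed: coherence of $\M$ forces coherence of its restriction to $\{E_1|H_1,\ldots,E_n|H_n\}$, so each $x_i\in[0,1]$, while Theorem \ref{THM:TEOREMAAI13} gives $T_L(x_1,\ldots,x_n)\le x_{1\cdots n}\le T_M(x_1,\ldots,x_n)$. Hence every coherent assessment lies in the set on the right-hand side of (\ref{EQ:SETPIGRECOn}).

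For the reverse inclusion I would fix $(x_1,\ldots,x_n)\in[0,1]^n$; by logical independence this is a coherent assessment on the $n$ conditional events, so by Theorem \ref{THM:FUND} the coherent extensions to $\C_{1\cdots n}$ fill a closed interval $[\mu',\mu'']$. The necessary condition of Theorem \ref{THM:TEOREMAAI13} gives $T_L\le\mu'\le\mu''\le T_M$, so it suffices to prove that the two endpoints $T_L(x_1,\ldots,x_n)$ and $T_M(x_1,\ldots,x_n)$ are themselves coherent: this forces $\mu'=T_L$ and $\mu''=T_M$, and then every value of $x_{1\cdots n}$ in between is coherent, which is exactly the claim.

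To establish coherence of each endpoint I would invoke Theorem \ref{THM:SIGMASTARn} and exhibit an explicit nonnegative solution of $(\Sigma^*_n)$. Reading (\ref{EQ:SIGMA*RECALLED}), a solution is precisely a probability distribution $(\lambda_S)_{S\subseteq\{1,\ldots,n\}}$ on the subsets of $\{1,\ldots,n\}$ whose inclusion marginals are prescribed, $\sum_{S\ni j}\lambda_S=x_j$, and whose mass on the top element is the target, $\lambda_{\{1,\ldots,n\}}=x_{1\cdots n}$; equivalently, a coupling of $n$ events with marginals $x_1,\ldots,x_n$ and a prescribed probability of their simultaneous occurrence. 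For the upper bound $x_{1\cdots n}=T_M=\min_i x_i$ I would use the comonotone coupling: after relabelling so that $x_1\le\cdots\le x_n$, put $\lambda_{\{i,\ldots,n\}}=x_i-x_{i-1}$ (with $x_0=0$) and $\lambda_\emptyset=1-x_n$, all other $\lambda_S$ being zero. A direct check shows the marginals are matched, the weights sum to $1$, and $\lambda_{\{1,\ldots,n\}}=x_1=\min_i x_i$.

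For the lower bound $x_{1\cdots n}=T_L=\max\{\sum_i x_i-(n-1),0\}$ the natural choice is the ``maximally spread'' coupling, which makes the complementary events as disjoint as possible. When $\sum_i x_i\ge n-1$ this is completely explicit: set $\lambda_{\{1,\ldots,n\}}=\sum_i x_i-(n-1)$, $\lambda_{\{1,\ldots,n\}\setminus\{j\}}=1-x_j$ for each $j$, and all remaining $\lambda_S=0$; one verifies at once that the weights are nonnegative (here $\sum_j(1-x_j)\le 1$), sum to $1$, reproduce the marginals, and put the required mass $\sum_i x_i-(n-1)$ on the top. The delicate case, and the one I expect to be the main obstacle, is $\sum_i x_i<n-1$, where $T_L=0$ and one must produce a distribution with the given marginals carrying \emph{zero} mass on the full set: here $\sum_j(1-x_j)>1$, so the complementary arcs of lengths $1-x_j$ can be arranged around a circle of circumference $1$ to cover it entirely, whence no point belongs to all $n$ events and $\lambda_{\{1,\ldots,n\}}=0$ while the marginals remain $x_j$. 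This overlap bookkeeping---ensuring nonnegativity and the exact marginals while driving the top mass to $0$---is exactly what the sketch handles by induction on $n$ with the several sub-cases on $x_{n+1}$; either route closes the argument, and combining the two endpoints with Theorem \ref{THM:FUND} yields (\ref{EQ:SETPIGRECOn}).
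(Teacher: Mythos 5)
Your proposal is correct, and its skeleton coincides with the paper's: both inclusions are reduced exactly as you do (coherence of the restriction plus Theorem \ref{THM:TEOREMAAI13} for one direction; Theorem \ref{THM:FUND} plus sharpness of the two endpoints for the other), coherence of each endpoint is obtained from Theorem \ref{THM:SIGMASTARn} by exhibiting a nonnegative solution of $(\Sigma^*_n)$, and your two explicit solutions --- the comonotone one for $T_M$, and the one putting mass $\sum_i x_i-(n-1)$ on the top set and $1-x_j$ on the co-singletons for $T_L$ when $\sum_i x_i\ge n-1$ --- are literally the paper's (\ref{EQ:SOLSIGMANTMIN}) and (\ref{EQ:SOLUTIONTLPOS}). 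Where you genuinely diverge is the remaining case $\sum_i x_i<n-1$: the paper proves solvability of $(\Sigma^*_n)$ by induction on $n$, splitting into several sub-cases on $x_{n+1}$ and assembling $\Lambda_{n+1}$ from $\Lambda_n$ via convex combinations, whereas you give a direct, non-inductive coupling: place the complementary arcs of lengths $1-x_j$ consecutively around a circle of circumference $1$; since their total length exceeds $1$ they cover the circle, so no point lies in all $n$ events, and the finite partition induced by the arc endpoints yields a distribution $(\lambda_S)$ with marginals $x_j$ and zero mass on $\{1,\ldots,n\}$. This is sound and classical (indeed it handles the case $\sum_i x_i\ge n-1$ uniformly too, since there the uncovered portion has length exactly $T_L$); to make it airtight you should state the two routine facts you leave implicit, namely that consecutively placed arcs of total length at least $1$ cover the circle, and that passing to the finite partition determined by the arc endpoints is what converts Lebesgue measure into a solution of the finite system $(\Sigma^*_n)$. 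What the circle argument buys is brevity and uniformity --- no induction, no sub-cases; what the paper's induction buys is explicit recursive formulas for the solutions (e.g.\ (\ref{EQ:Lambda_n+1}) and (\ref{EQ:COMBLIN0s})), which the paper reuses in Section \ref{SEC:4} to write down closed-form solutions in all the Lukasiewicz cases.
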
	
\begin{proof}
Given any integer $n\geq 2$, 
by logical independence of $E_1,\ldots,E_n,H_1,\ldots,H_n$ 
each point $(x_1,\ldots,x_n)\in[0,1]^n$
is a coherent assessment   on $\{E_1|H_1,\ldots,E_n|H_n\}$
(\cite[Proposition 11]{GiIn98}).
Moreover, by  Theorem \ref{THM:FUND}, for each  $(x_1,\ldots,x_n)\in[0,1]^n$
there exist two values $\mu'(x_1,\ldots,x_n)$   and $\mu''(x_1,\ldots,x_n)$ such that $x_{1\cdots n}$ is a coherent extension of $(x_1,\ldots,x_n)$ if and only if $x_{1\cdots n}\in[\mu'(x_1,\ldots,x_n),\mu''(x_1,\ldots,x_n)]$. Then, 
\[
\Pi=\{(x_1,\ldots,x_n,x_{1\cdots n}):(x_1,\ldots,x_n)\in[0,1]^n, x_{1\cdots n}\in [\mu'(x_1,\ldots,x_n),\mu''(x_1,\ldots,x_n)]\}.
\]

By Theorem \ref{THM:TEOREMAAI13}, coherence requires that
$x_{1\cdots n}\in[T_L(x_1,\ldots,x_{n}),T_M(x_1,\ldots,x_{n})]$ and hence
\[
T_L(x_1,\ldots, x_n)\leq \mu'(x_1,\ldots,x_n)\leq \mu''(x_1,\ldots,x_n)\leq T_M(x_1,\ldots, x_n).
\]	
Thus, $\Pi\subseteq \{(x_1,\ldots,x_n,x_{1\cdots n}):(x_1,\ldots,x_n)\in[0,1]^n, x_{1\cdots n}\in [T_L(x_1,\ldots,x_{n}),T_M(x_1,\ldots,x_{n})]\}$. 
In order to complete the proof it is enough   to show  that 
the two assessments 
$(x_1,\ldots ,x_n,T_L(x_1,\ldots, x_n))$
and 	$(x_1,\ldots ,x_n,T_M(x_1,\ldots, x_n))$  are coherent, for every 
$(x_1,\ldots ,x_n)\in[0,1]^n$, that is
$\mu'(x_1,\ldots,x_n)=T_L(x_1,\ldots, x_n)$
and $\mu''(x_1,\ldots,x_n)=T_M(x_1,\ldots, x_n)$, 
which amounts to the sharpness of the Fr\'echet-Hoeffding  bounds.
	\paragraph{Coherence of 	$(x_1,\ldots ,x_n,T_L(x_1,\ldots, x_n))$}
		We will proceed by induction  on the solvability of the system $(\Sigma^*_n)$ associated to the assessment $(x_1,\ldots ,x_n,T_L(x_1,\ldots, x_n))$.
		\\
		($n=2$). In this case, by Remark \ref{REM:TETRAHEDRON}, the assessment $\M=(x_1,x_2,T_L(x_1,x_2))$ is coherent because $\M$ belongs to the set of coherent prevision assessments, given by the tetrahedron with vertices the points $(1,1,1), (1,0,0), (0,1,0), (0,0,0)$.	
		Moreover, by recalling (\ref{EQ:KAPPA}), for $n=2$ the constituents which logically imply  $H_1H_2$ are
\[
\begin{array}{ll}
K_{11}=E_1H_1E_2H_2,\;\; K_{1\no{2}}=E_1H_1\no{E}_2H_2,\;\; 
K_{\no{1}2}=\no{E}_1H_1E_2H_2
,\;\;
K_{\no{1}\no{2}}=\no{E}_1H_1\no{E}_2H_2.
\end{array}
\]	
The associated points  are
\[
\begin{array}{ll}
	Q_{11}=(1,1,1),\;\; Q_{1\no{2}}=(1,0,0),\;\;
	Q_{\no{1}2}=(0,1,0),\;\;
	Q_{\no{1}\no{2}}=(0,0,0);
\end{array}
\]	
Thus, the convex hull $\I^*$ of $Q_{12}, Q_{1\no{2}}, Q_{\no{1}2}, Q_{\no{1}\no{2}}$ coincides with the tetrahedron and hence $\M \in \I^*$, that is $(\Sigma^*_2)$ is solvable. 
Indeed, $(\Sigma^*_2)$ is the following system
\begin{equation} 
(\Sigma^*_2) \left\{ 
\begin{array}{ll}
x_{1}
=\lambda_{12}+\lambda_{1\no{2}},\\
x_{2}
=\lambda_{12}+\lambda_{\no{1}2},\\
T_{L}(x_1,x_2)=\lambda_{12}, \\
\lambda_{12}+\lambda_{1\no{2}}+\lambda_{\no{1}2}+\lambda_{\no{1}\no{2}}=1,\\
\lambda_{i_1i_2} \geq 0, \;\; \forall (i_1,i_2)\in\{1,\no{1}\}\times \{2,\no{2}\},
\end{array}
\right.
\end{equation}
with a solution 
\[
\begin{array}{ll}
\Lambda_2=(\lambda_{12},\lambda_{\no{1}2},\lambda_{1\no{2}},\lambda_{\no{1}\no{2}})=(T_L(x_1,x_2),x_2-T_L(x_1,x_2),x_1-T_L(x_1,x_2),1-x_1-x_2+T_L(x_1,x_2)).
\end{array}
\]
In particular
\[
\Lambda_2=\left\{
\begin{array}{ll}
(0,x_2,x_1,1-x_1-x_2),\, \mbox{ if } x_1+x_2-1\leq 0,\\
(x_1+x_2-1,1-x_1,1-x_2,0),\, \mbox{ if } x_1+x_2-1>0.\\
\end{array}
\right.
\]
We now assume the system $(\Sigma^*_n)$ associated with  the assessment $(x_1,\ldots,x_n,T_L(x_1,\ldots,x_n))$ is solvable and  we  show that the system  $(\Sigma^*_{n+1})$ associated with the assessment $(x_1,\ldots,x_{n+1},T_L(x_1,\ldots,x_{n+1}))$ is solvable too. Then, $(\Sigma^*_n)$  is solvable for every $n\geq 2$ and    by Theorem \ref{THM:SIGMASTARn} it follows the  coherence of the assessment  $(x_1,\ldots,x_{n},T_L(x_1,\ldots,x_{n}))$, for every $n$. \\
	Let the vector $\Lambda_n=(\lambda_{i_1\cdots i_k \no{i_{k+1}}\cdots \no{i_n}}: \{i_1,\ldots,i_k\}\subseteq\{1,\ldots,n\})$ be a solution of $(\Sigma_n^*)$. Then, by (\ref{EQ:SIGMA*RECALLED}),  $\lambda_{1\cdots n}=x_{1\cdots n}=T_L(x_1,\ldots,x_n)$.
	When necessary we assume that the components of $\Lambda_n$ are suitably ordered.
	Given a further conditional event $E_{n+1}|H_{n+1}$, with $P(E_{n+1}|H_{n+1})=x_{n+1}$, the system $(\Sigma_{n+1}^*)$ associated  with the  assessment $(x_1,\ldots,x_{n+1},T_L(x_1,\ldots,x_{n+1}))$ on $\{E_1|H_1,\ldots,E_{n+1}|H_{n+1},\C_{1\cdots n+1}\}$ is
	\begin{equation} 
	(\Sigma^*_{n+1}) \left\{ 
	\begin{array}{ll}
	x_{j}
	=\sum_{
		\{j\}\subseteq	\{i_1,\ldots,i_k\}\subseteq \{1,\ldots,n+1\}}
	\lambda_{i_1\cdots i_k \no{i_{k+1}} \cdots \no{i_{n+1}}},\;\; j=1,\ldots,n+1,\\
	T_L(x_1,\ldots,x_{n+1})=\lambda_{1\cdots n+1},\\
	\sum_{\{i_1,\ldots,i_k\}\subseteq \{1,\ldots,n+1\}}\lambda_{i_1\cdots i_k \no{i_{k+1}}\cdots \no{i_{n+1}}}=1,\\
	\lambda_{i_1\cdots i_k \no{i_{k+1}} \cdots \no{i_{n+1}}}\geq 0, \;\; \forall \{i_1,\ldots,i_k\}\subseteq \{1,\ldots,n+1\}.
	\end{array}
	\right.
	\end{equation}
Based on $\Lambda_n$  we will find a solution $\Lambda_{n+1}$ of $(\Sigma_{n+1}^*)$ such that 
\begin{equation}
\lambda_{i_1\cdots i_k \no{i_{k+1}} \cdots \no{i_n}}=
\lambda_{i_1\cdots i_k \no{i_{k+1}} \cdots \no{i_n}n+1}+\lambda_{i_1\cdots i_k \no{i_{k+1}} \cdots \no{i_n}\no{n+1}},\;\; \{i_1,\ldots,i_k\}\subseteq\{1,\ldots,n\}.
\end{equation}
	Then,  the system $(\Sigma_{n+1}^*)$ becomes
	\begin{equation} \label{EQ:SIGMANEWOLD}
	\left\{ 
	\begin{array}{ll}
	\lambda_{i_1\cdots i_k \no{i_{k+1}} \cdots \no{i_n}}=\lambda_{i_1\cdots i_k \no{i_{k+1}} \cdots \no{i_n}n+1}+\lambda_{i_1\cdots i_k \no{i_{k+1}} \cdots \no{i_n}\no{n+1}},\;\; \{i_1,\ldots,i_k\}\subseteq \{1,\ldots,n\},\\
	x_{j}=
	\displaystyle\sum_{\{j\}\subseteq \{i_1,\ldots, i_k\}\subseteq \{1,\ldots,n\}}
	\lambda_{i_1\cdots i_k \no{i_{k+1}} \cdots \no{i_{n}}n+1}+\displaystyle\sum_{\{j\}\subseteq \{i_1,\ldots, i_k\}\subseteq \{1,\ldots,n\}}
	\lambda_{i_1\cdots i_k \no{i_{k+1}} \cdots \no{i_{n}}\no{n+1}},\;\; j=1,\ldots,n,\\
	x_{n+1}=\displaystyle\sum_{ \{i_1,\ldots, i_k\}\subseteq \{1,\ldots,n\}}
	\lambda_{i_1\cdots i_k \no{i_{k+1}} \cdots \no{i_{n}}n+1},\\
		T_L(x_1,\ldots,x_{n+1})=\lambda_{1\cdots n+1},\\
	\sum_{\{i_1,\ldots,i_k\}\subseteq \{1,\ldots,n+1\}}\lambda_{i_1\cdots i_k \no{i_{k+1}}\cdots \no{i_{n+1}}}=1,\\
	\lambda_{i_1\cdots i_k \no{i_{k+1}} \cdots \no{i_{n+1}}}\geq 0, \;\; \forall \{i_1,\ldots,i_k\}\subseteq \{1,\ldots,n+1\}.
	\end{array}
	\right.
	\end{equation}	
	We distinguish two cases:  $(i)$ $T_L(x_1,\ldots,x_{n})=0$, that is $x_1+\cdots+x_{n}-n+1\leq0 $; $(ii)$ $T_L(x_1,\ldots,x_{n})> 0$, that is ${x_1+\cdots+x_{n}-n+1>0}$.
	\\
	Case $(i)$. As $T_L(x_1,\ldots,x_{n})=0$, it holds that   $T_L(x_1,\ldots,x_{n+1})=T_L(T_L(x_1,\ldots,x_n),x_{n+1})=T_L(0,x_{n+1})=0$.
	Moreover, for the component $\lambda_{1\cdots n}$  of the vector $\Lambda_n$ it holds that $\lambda_{1\cdots n}=0$; hence, in (\ref{EQ:SIGMANEWOLD}), $\lambda_{1\cdots n\no{n+1}}=0$ and $\lambda_{1\cdots nn+1}=0$, which satisfies the equation $T_L(x_1,\ldots,x_{n+1})=\lambda_{1\cdots n+1}$.
	We first examine  the particular cases where $x_{n+1}=0$, or $x_{n+1}=1$; then, we consider the case $0<x_{n+1}<1$. 
		
	If $x_{n+1}=0$, the system (\ref{EQ:SIGMANEWOLD}) becomes 
	\begin{equation} \label{EQ:SIGMANEWOLDx_{n+1}=0}
	\left\{ 
	\begin{array}{ll}
\lambda_{i_1\cdots i_k \no{i_{k+1}} \cdots \no{i_n}}=	\lambda_{i_1\cdots i_k \no{i_{k+1}} \cdots \no{i_n}\no{n+1}},\;\; \{i_1,\ldots,i_k\}\subseteq \{1,\ldots,n\},\\
\lambda_{1\cdots n\no{n+1}}=0,\\
	x_{j}=\displaystyle\sum_{\{j\}\subseteq \{i_1,\ldots, i_k\}\subseteq \{1,\ldots,n\}}
	\lambda_{i_1\cdots i_k \no{i_{k+1}} \cdots \no{i_{n}}\no{n+1}},\;\; j=1,\ldots,n,\\
	x_{n+1}=\displaystyle\sum_{ \{i_1,\ldots, i_k\}\subseteq \{1,\ldots,n\}}
	\lambda_{i_1\cdots i_k \no{i_{k+1}} \cdots \no{i_{n}}n+1}=0,\\
		T_L(x_1,\ldots,x_{n+1})=\lambda_{1\cdots n+1}=0,\\
\displaystyle\sum_{ \{i_1,\ldots, i_k\}\subseteq \{1,\ldots,n\}}
	\lambda_{i_1\cdots i_k \no{i_{k+1}} \cdots \no{i_{n}}\no{n+1}}=1-x_{n+1}=1,\\
	\lambda_{i_1\cdots i_k \no{i_{k+1}} \cdots \no{i_{n+1}}}\geq 0, \;\; \forall \{i_1,\ldots,i_k\}\subseteq \{1,\ldots,n+1\},
	\end{array}
	\right.
	\end{equation}
	with a solution $\Lambda_{n+1,0}$ given by 
	\[
	\Lambda_{n+1,0}=(\lambda_{i_1\cdots i_k \no{i_{k+1}} \cdots \no{i_n}n+1},\lambda_{i_1\cdots i_k \no{i_{k+1}} \cdots \no{i_n}\no{n+1}},\{i_1,\ldots,i_k\}\subseteq \{1,\ldots,n\}),
	\]
	where 
	$\lambda_{i_1\cdots i_k \no{i_{k+1}} \cdots \no{i_n}n+1}=0$  and 
	$\lambda_{i_1\cdots i_k \no{i_{k+1}} \cdots \no{i_n}\no{n+1}}=\lambda_{i_1\cdots i_k \no{i_{k+1}} \cdots \no{i_n}}$, $\{i_1,\ldots,i_k\}\subseteq \{1,\ldots,n\}$, with in particular $\lambda_{1\cdots n\no{n+1}}=\lambda_{1\cdots n}=0$. 
Thus $(\Sigma_{n+1}^*)$ is solvable when $T_L(x_1,\ldots,x_n)=0$ and $x_{n+1}=0$.
We also observe that 	$\Lambda_{n+1,0}$ has the following structure
	\begin{equation}
	(\lambda_{1\cdots n+1},
	\ldots,  \lambda_{\no{1}\cdots \no{n}n+1},
	\lambda_{1\cdots n\no{n+1}},
	\ldots,  \lambda_{\no{1}\cdots \no{n}\no{n+1}}
	)=(0,
	\ldots,  0,
	\lambda_{1\cdots n},
	\ldots,  \lambda_{\no{1}\cdots \no{n}}
	)=(\textbf{0}_n,\Lambda_n),
	\end{equation}
	where $\textbf{0}_n$ is the subvector $(0,\ldots,0)$ of length $2^n$ and $\Lambda_n$ is a solution of $(\Sigma_n^*)$.
	
	If $x_{n+1}=1$, the system (\ref{EQ:SIGMANEWOLD}) becomes 
	\begin{equation} \label{EQ:SIGMANEWOLDx_{n+1}=1}
	\left\{ 
	\begin{array}{ll}
\lambda_{i_1\cdots i_k \no{i_{k+1}} \cdots \no{i_n}}=	\lambda_{i_1\cdots i_k \no{i_{k+1}} \cdots \no{i_n}n+1},\;\; \{i_1,\ldots,i_k\}\subseteq \{1,\ldots,n\}\\
	x_{j}=\displaystyle\sum_{\{j\}\subseteq \{i_1,\ldots, i_k\}\subseteq \{1,\ldots,n\}}
	\lambda_{i_1\cdots i_k \no{i_{k+1}} \cdots \no{i_{n}}{n+1}},\;\; j=1,\ldots,n,\\
	x_{n+1}=\displaystyle\sum_{ \{i_1,\ldots, i_k\}\subseteq \{1,\ldots,n\}}
	\lambda_{i_1\cdots i_k \no{i_{k+1}} \cdots \no{i_{n}}n+1}=1,\\
T_L(x_1,\ldots,x_{n+1})=\lambda_{1\cdots n+1}=0,\\
\displaystyle\sum_{ \{i_1,\ldots, i_k\}\subseteq \{1,\ldots,n\}}
	\lambda_{i_1\cdots i_k \no{i_{k+1}} \cdots \no{i_{n}}\no{n+1}}=	1-x_{n+1}=0,\\
	\lambda_{i_1\cdots i_k \no{i_{k+1}} \cdots \no{i_{n+1}}}\geq 0, \;\; \forall \{i_1,\ldots,i_k\}\subseteq \{1,\ldots,n+1\},
	\end{array}
	\right.
	\end{equation}
	with a solution $\Lambda_{n+1,1}$ given by 
	\[
	\Lambda_{n+1,1}=(\lambda_{i_1\cdots i_k \no{i_{k+1}} \cdots \no{i_n}n+1},\lambda_{i_1\cdots i_k \no{i_{k+1}} \cdots \no{i_n}\no{n+1}},\{i_1,\ldots,i_k\}\subseteq \{1,\ldots,n\}),
	\]
	where 
	$\lambda_{i_1\cdots i_k \no{i_{k+1}} \cdots \no{i_n}n+1}=\lambda_{i_1\cdots i_k \no{i_{k+1}} \cdots \no{i_n}}$  and 
	$\lambda_{i_1\cdots i_k \no{i_{k+1}} \cdots \no{i_n}\no{n+1}}=0$, $\{i_1,\ldots,i_k\}\subseteq \{1,\ldots,n\}$, with in particular $\lambda_{1\cdots n\no{n+1}}=\lambda_{1\cdots n}=0$.
Thus $(\Sigma_{n+1}^*)$ is solvable when $T_L(x_1,\ldots,x_n)=0$ and $x_{n+1}=1$.
We also observe that 	$\Lambda_{n+1,1}$ has the following structure	
	\begin{equation}\label{EQ:Lambda_1n+1}
	\begin{array}{ll}
	\Lambda_{n+1,1}=(\lambda_{1\cdots n+1},
	\ldots,  \lambda_{\no{1}\cdots \no{n}n+1},
	\lambda_{1\cdots n\no{n+1}},
	\ldots,  \lambda_{\no{1}\cdots \no{n}\no{n+1}}
	)=\\=(
	\lambda_{1\cdots n},
	\ldots,  \lambda_{\no{1}\cdots \no{n}},0,
	\ldots,  0
	)=(\Lambda_n,\textbf{0}_n).
	\end{array}
	\end{equation}	
	
	If $0<x_{n+1}<1$,  by observing that  	$(x_1,\ldots,x_{n+1},T_{L}(x_1,\ldots,x_{n+1}))=	(x_1,\ldots,x_{n+1},0)$, as 
	\[
(x_1,\ldots,x_{n+1},0)=
(1-x_{n+1}) \cdot (x_1,\ldots,x_{n},0,0)+x_{n+1} \cdot (x_1,\ldots,x_{n},1,0),
\]
the vector  $\Lambda_{n+1}=(1-x_{n+1})\Lambda_{n+1,0}+x_{n+1}\Lambda_{n+1,1}$ is a solution of  system (\ref{EQ:SIGMANEWOLD}); thus $(\Sigma_{n+1}^*)$  is solvable  when $T_{L}(x_1,\ldots,x_{n})=0$ and $0<x_{n+1}<1$. 

Therefore,  by exploiting the solution $\Lambda_n$ of  $(\Sigma_n^*)$,
when $T_{L}(x_1,\ldots,x_{n})=0$
the system $(\Sigma_{n+1}^*)$  is solvable for every $x_{n+1}\in[0,1]$,  with a solution given by 
	\begin{equation}\label{EQ:Lambda_n+1}
	\begin{array}{ll}
	\Lambda_{n+1}=
	(\lambda_{1\cdots n+1},
	\ldots,  \lambda_{\no{1}\cdots \no{n}n+1},
	\lambda_{1\cdots n\no{n+1}},
	\ldots,  \lambda_{\no{1}\cdots \no{n}\no{n+1}}
	)=	(1-x_{n+1})\Lambda_{n+1,0}+x_{n+1}\Lambda_{n+1,1}=
	\\=(1-x_{n+1})(\textbf{0}_n,\Lambda_n)+
	x_{n+1}(\Lambda_n,\textbf{0}_n)=(x_{n+1}\Lambda_n,
	(1-x_{n+1})\Lambda_n)=
	\\
	=(x_{n+1}\lambda_{1\cdots n},
	\ldots,  x_{n+1}\lambda_{\no{1}\cdots \no{n}}
	,(1-x_{n+1})\lambda_{1\cdots n},
	\ldots,  (1-x_{n+1})\lambda_{\no{1}\cdots \no{n}}).
	\end{array}
	\end{equation}
Case $(ii)$. In this case $T_L(x_1,\ldots,x_n)=x_1+\ldots+x_n-(n-1)>0$ and   by the inductive hypothesis the system $(\Sigma_n^*)$ is solvable. Actually,  an explicit solution of $(\Sigma_n^*)$ is the nonnegative vector $\Lambda_{n}=(\lambda_{i_1\cdots i_k \no{i_{k+1}} \cdots \no{i_n}},\{i_1,\ldots,i_k\}\subseteq \{1,\ldots,n\})$   given by 
	\begin{equation}\label{EQ:SOLUTIONTLPOS}
	\begin{array}{ll}
	\left\{
	\begin{array}{ll}
	\lambda_{1\cdots n} 
	=
	x_{1}+\cdots+x_n-n+1, \\
	\lambda_{\no{1}2\cdots n}=
	1-x_1,  \\
	\lambda_{1\no{2}3\cdots n}=
	1-x_2,\\
	\dotfill\\
	\lambda_{1\cdots r-1\,\no{r}\,r+1\cdots n}=
	1-x_r,\\
	\dotfill\\
	\lambda_{1\cdots n-1\no{n}}=1-x_n,\\
	\lambda_{i_1\cdots i_k \no{i_{k+1}} \cdots \no{i_n}}=0, \;\;
	\forall \{i_1,\ldots,i_k\}\subseteq \{1,\ldots,n\},\;\;  k<n-1.
	\end{array}
	\right.
	\end{array}
	\end{equation}
	Indeed,  $\Lambda_n$ is a solution of $(\Sigma^*_n)$ as shown below
	\begin{equation}\label{EQ:SIGMANCASOii}
	\left\{ 
	\begin{array}{ll}
	x_{j}=\lambda_{1\cdots n} +\sum_{k=1}^{n}\lambda_{1\cdots k-1 \no{k} k+1\cdots n}-\lambda_{1\cdots j-1 \no{j} j+1\cdots n}, \;\; j=1,2\ldots,n,\\
	T_L(x_1,\ldots,x_n)=\lambda_{1\cdots n}, \\
	\lambda_{i_1\cdots i_k \no{i_{k+1}} \cdots \no{i_n}}=0, \;\;
	\forall \{i_1,\ldots,i_k\}\subseteq \{1,\ldots,n\},\;\;  k<n-1,\\	
\sum_{\{i_1,\ldots,i_k\}\subseteq \{1,\ldots,n\}}\lambda_{i_1\cdots i_k \no{i_{k+1}}\cdots \no{i_{n}}}=\lambda_{1\cdots n} +\sum_{k=1}^{n}\lambda_{1\cdots k-1 \no{k} k+1\cdots n}=1.
	\\
	\end{array}
	\right.
	\end{equation}
	Based on  (\ref{EQ:SOLUTIONTLPOS})   the system (\ref{EQ:SIGMANEWOLD}) becomes
\begin{equation} \label{EQ:SIGMAN+1CASOii}
\left\{ 
\begin{array}{ll}
\lambda_{1\cdots n+1}=T_L(x_1,\ldots,x_{n+1}),\\
\lambda_{1\cdots n+1}+\lambda_{1\cdots n\no{n+1}}=\lambda_{1\cdots n}=x_1+\cdots +x_{n}-n+1,\\
\lambda_{1\cdots r-1\,\no{r}\,r+1\cdots n+1}+\lambda_{1\cdots r-1\,\no{r}\,r+1\cdots n\no{n+1}}=
\lambda_{1\cdots r-1\,\no{r}\,r+1\cdots n}=
1-x_r, \;\ r=1,\ldots,n,\\
\lambda_{1\cdots n+1}+\lambda_{\no{1}2\cdots n+1}+\lambda_{1\no{2}3\cdots n+1}+\cdots+\lambda_{12\cdots {n-1}\no{n}n+1}=x_{n+1},\\
\lambda_{1\cdots n\no{n+1}}+\lambda_{\no{1}2\cdots n\no{n+1}}+\lambda_{1\no{2}3\cdots n\no{n+1}}+\cdots+\lambda_{12\cdots {n-1}\no{n}\no{n+1}}=1-x_{n+1},\\
\lambda_{i_1\cdots i_k \no{i_{k+1}} \cdots \no{i_n}n+1}+\lambda_{i_1\cdots i_k \no{i_{k+1}} \cdots \no{i_n}\no{n+1}}=\lambda_{i_1\cdots i_k \no{i_{k+1}} \cdots \no{i_n}}=0, 
\forall \{i_1,\ldots,i_k\}\subseteq \{1,\ldots,n\},\;\;
k<n-1,\\
\lambda_{i_1\cdots i_k \no{i_{k+1}} \cdots \no{i_{n+1}}}\geq 0, \;\; \forall \{i_1,\ldots,i_k\}\subseteq \{1,\ldots,n+1\}.
\end{array}
\right.
\end{equation}	
	Based  on the solution of $(\Sigma_{n}^*)$ given in  (\ref{EQ:SOLUTIONTLPOS}) we  find a solution of (\ref{EQ:SIGMAN+1CASOii}), which of course is also  a solution of 	 $(\Sigma_{n+1}^*)$.
	We  observe that, as $T_{L}(x_1,\ldots,x_n)>0$, it holds that  $n-\sum_{i=1}^nx_i=1-T_{L}(x_1,\ldots,x_n)<1$;  then, we distinguish two  sub-cases which concern $x_{n+1}$:\\
	$(ii.a)$ $0\leq x_{n+1}\leq n-\sum_{i=1}^nx_i<1$;
	$(ii.b)$
	$n-\sum_{i=1}^nx_i<x_{n+1}\leq 1$.\\
	Sub-case $(ii.a)$.  In this case  $T_L(x_1,\ldots,x_{n+1})=0$.
	We analyze separately three cases:\\ $(ii.a.1)$ $x_{n+1}=0$;  $(ii.a.2)$ $x_{n+1}=n-x_1-\cdots-x_n$; $(ii.a.3)$ $0<x_{n+1}<n-x_1-\cdots-x_n$.\\
	In case $(ii.a.1)$, as  $x_{n+1}=0$,
	the system (\ref{EQ:SIGMAN+1CASOii}) 
	becomes
	\begin{equation} \label{EQ:SIGMAN+1CASOiix_{n+1}=0}
	\left\{ 
	\begin{array}{ll}
	\lambda_{1\cdots n+1}=0,\\
	\lambda_{1\cdots n\no{n+1}}=x_1+\cdots +x_{n}-n+1,\\
	\lambda_{1\cdots r-1\,\no{r}\,r+1\cdots n\no{n+1}}=
	1-x_r, \;\ r=1,\ldots,n,\\
\lambda_{\no{1}2\cdots n+1}=\lambda_{1\no{2}3\cdots n+1}=\cdots=\lambda_{12\cdots {n-1}\no{n}n+1}=x_{n+1}=0,\\
	\lambda_{1\cdots n\no{n+1}}+\lambda_{\no{1}2\cdots n\no{n+1}}+\lambda_{1\no{2}3\cdots n\no{n+1}}+\cdots+\lambda_{12\cdots {n-1}\no{n}\no{n+1}}=1-x_{n+1}=1,\\
	\lambda_{i_1\cdots i_k \no{i_{k+1}} \cdots \no{i_n}n+1}=\lambda_{i_1\cdots i_k \no{i_{k+1}} \cdots \no{i_n}\no{n+1}}=0,\forall \{i_1,\ldots,i_k\}\subseteq \{1,\ldots,n\},\;\;
	k<n-1,\\
	\lambda_{i_1\cdots i_k \no{i_{k+1}} \cdots \no{i_{n+1}}}\geq 0, \;\; \forall \{i_1,\ldots,i_k\}\subseteq \{1,\ldots,n+1\},
	\end{array}
	\right.
	\end{equation}
	with a solution $\Lambda_{n+1,0}$ given by 
	\[\Lambda_{n+1,0}=(\lambda_{1\cdots n+1},
	\ldots,  \lambda_{\no{1}\cdots \no{n}n+1},
	\lambda_{1\cdots n\no{n+1}},
	\ldots,  \lambda_{\no{1}\cdots \no{n}\no{n+1}}
	),\]
	where
	\begin{equation}\label{EQ:Lambda_0n+1bis}
	\begin{array}{ll}
	\left\{
	\begin{array}{ll}
	\lambda_{1\cdots n+1}=0,\\
	\lambda_{1\cdots r-1\,\no{r}\,r+1\cdots n+1}=
	0, \;\ r=1,\ldots,n,\\
	\lambda_{i_1\cdots i_k \no{i_{k+1}} \cdots \no{i_n}n+1}=0, \forall \{i_1,\ldots,i_k\}\subseteq \{1,\ldots,n\},\;\;
	k<n-1, \\
	\lambda_{1\cdots n\no{n+1}}=x_1+\cdots +x_{n}-n+1,\\
	\lambda_{1\cdots r-1\,\no{r}\,r+1\cdots n\no{n+1}}=
	1-x_r,\;\; r=1,\ldots,n,\\
	\lambda_{i_1\cdots i_k \no{i_{k+1}} \cdots \no{i_n}\no{n+1}}=0, \forall \{i_1,\ldots,i_k\}\subseteq \{1,\ldots,n\},\;\;
	k<n-1.
	\end{array}
	\right.
	\end{array}
	\end{equation}
	Thus, $(\Sigma_{n+1}^*)$ is solvable when $T_{L}(x_1,\ldots,x_n)>0$ and  $x_{n+1}=0$.
We observe that, from (\ref{EQ:SIGMAN+1CASOii}) and  (\ref{EQ:Lambda_0n+1bis}), it holds that
	\begin{equation}
	\Lambda_{n+1,0}=(0,
	\ldots,  0,
	\lambda_{1\cdots n},
	\ldots,  \lambda_{\no{1}\cdots \no{n}}
	)=(\textbf{0}_n,\Lambda_n).
	\end{equation}
	In case $(ii.a.2)$, 
	we preliminarily observe that, 
	if  $n-(x_1+\cdots+x_n)=\sum_{r=1}^n(1-x_r)=0$, that is $x_r=1$, $r=1,\ldots,n$ and hence $T_L(x_1,\ldots,x_n)=1$, then $x_{n+1}=n-(x_1+\cdots+x_n)=0$, which is the case $(ii.a.1)$ considered before. In this case  a solution $\Lambda_{n+1}$ of $(\Sigma_{n+1}^*)$ is the vector $\Lambda_{n+1,0}$ given in  (\ref{EQ:Lambda_0n+1bis}),  which becomes
	\begin{equation}\label{EQ:Lambda_0n+1T1}
\begin{array}{ll}
\left\{
\begin{array}{ll}

\lambda_{1\cdots n+1}=0,\\
\lambda_{1\cdots r-1\,\no{r}\,r+1\cdots n+1}=
0, \;\ r=1,\ldots,n,\\
\lambda_{i_1\cdots i_k \no{i_{k+1}} \cdots \no{i_n}n+1}=0, \forall \{i_1,\ldots,i_k\}\subseteq \{1,\ldots,n\},\;\;
k<n-1, \\
\lambda_{1\cdots n\no{n+1}}=1,\\
\lambda_{1\cdots r-1\,\no{r}\,r+1\cdots n\no{n+1}}=
0,\;\; r=1,\ldots,n,\\
\lambda_{i_1\cdots i_k \no{i_{k+1}} \cdots \no{i_n}\no{n+1}}=0, \forall \{i_1,\ldots,i_k\}\subseteq \{1,\ldots,n\},\;\;
k<n-1.
\end{array}
\right.
\end{array}
\end{equation}
	If $0<x_{n+1}=n-(x_1+\cdots+x_n)<1$, then 
the  system 
	(\ref{EQ:SIGMAN+1CASOii}) becomes
	\begin{equation}
	\left\{ 
	\begin{array}{ll}
	\lambda_{1\cdots n+1}=0,\\
	\lambda_{1\cdots n\no{n+1}}=x_1+\cdots +x_{n}-n+1,\\
	\lambda_{1\cdots r-1\,\no{r}\,r+1\cdots n+1}+\lambda_{1\cdots r-1\,\no{r}\,r+1\cdots n\no{n+1}}=
	1-x_r, \;\ r=1,\ldots,n,\\
\lambda_{\no{1}2\cdots n+1}+\lambda_{1\no{2}3\cdots n+1}+\cdots+\lambda_{12\cdots {n-1}\no{n}n+1}=x_{n+1}=n-(x_1+\cdots+x_n),\\
	\lambda_{1\cdots n\no{n+1}}+\lambda_{\no{1}2\cdots n\no{n+1}}+\lambda_{1\no{2}3\cdots n\no{n+1}}+\cdots+\lambda_{12\cdots {n-1}\no{n}\no{n+1}}=1-x_{n+1}=x_1+\cdots +x_{n}-n+1,\\
	\lambda_{i_1\cdots i_k \no{i_{k+1}} \cdots \no{i_n}n+1}+\lambda_{i_1\cdots i_k \no{i_{k+1}} \cdots \no{i_n}\no{n+1}}=0, \forall \{i_1,\ldots,i_k\}\subseteq \{1,\ldots,n\},\;\;
	k<n-1, \\
	\lambda_{i_1\cdots i_k \no{i_{k+1}} \cdots \no{i_{n+1}}}\geq 0, \;\; \forall \{i_1,\ldots,i_k\}\subseteq \{1,\ldots,n+1\},
	\end{array}
	\right.
	\end{equation}
	which, by setting $s=n-(x_1+\cdots+x_n)$, is  solvable with a solution given by  \[\Lambda_{n+1,s}=(\lambda_{1\cdots n+1},
	\ldots,  \lambda_{\no{1}\cdots \no{n}n+1},
	\lambda_{1\cdots n\no{n+1}},
	\ldots,  \lambda_{\no{1}\cdots \no{n}\no{n+1}}),\] where
	\begin{equation} \label{EQ:Lambda_sn+1bis}
	\left\{ 
	\begin{array}{ll}
	\lambda_{1\cdots n+1}=0,\\
	\lambda_{1\cdots r-1\,\no{r}\,r+1\cdots n+1}=
	1-x_r, \;\ r=1,\ldots,n,\\
	\lambda_{i_1\cdots i_k \no{i_{k+1}} \cdots \no{i_n}n+1}=0, \forall \{i_1,\ldots,i_k\}\subseteq \{1,\ldots,n\},\;\;
	k<n-1,\\
	\lambda_{1\cdots n\no{n+1}}=x_1+\cdots +x_{n}-n+1,\\
	\lambda_{1\cdots r-1\,\no{r}\,r+1\cdots n\no{n+1}}=0,\;\ r=1,\ldots,n,\\
	\lambda_{i_1\cdots i_k \no{i_{k+1}} \cdots \no{i_n}\no{n+1}}=0,\forall \{i_1,\ldots,i_k\}\subseteq \{1,\ldots,n\},\;\;
	k<n-1.\\
	\end{array}
	\right.
	\end{equation}
	Thus, $(\Sigma_{n+1}^*)$ is solvable when $T_{L}(x_1,\ldots,x_n)>0$ and  $x_{n+1}=n-(x_1+\cdots +x_n)$.  
	\\
	In case $(ii.a.3)$,  as the vector $	(x_1,\ldots,x_{n+1},0)$ coincides with the linear convex combination 
	\[
	\begin{array}{ll}
	\left(1-\frac{x_{n+1}}{n-(x_1+\cdots+x_n)}\right) \cdot (x_1,\ldots,x_{n},0,0)
	+\frac{x_{n+1}}{n-(x_1+\cdots+x_n)}\cdot (x_1,\ldots,x_n,n-(x_1+\cdots+x_n),0),
	\end{array}
	\]
	the vector  
\begin{equation}\label{EQ:COMBLIN0s}
	\Lambda_{n+1}=\left(1-\frac{x_{n+1}}{n-(x_1+\cdots+x_n)}\right)\Lambda_{n+1,0}+\frac{x_{n+1}}{n-(x_1+\cdots+x_n)}\Lambda_{n+1,s}
	\end{equation}
 is a solution of  system (\ref{EQ:SIGMANEWOLD}); thus $(\Sigma_{n+1}^*)$  is solvable  when $T_{L}(x_1,\ldots,x_{n})>0$ and $0<x_{n+1}<n-(x_1+\cdots+x_n)$. 
 We observe that, from (\ref{EQ:Lambda_0n+1bis}) and  (\ref{EQ:Lambda_sn+1bis}),
 in explicit terms the components  
 of  the solution  $\Lambda_{n+1}$ in (\ref{EQ:COMBLIN0s}) are
 \begin{equation} \label{EQ:RELNPOSEN+1ZERO}
 \left\{ 
 \begin{array}{ll}
 \lambda_{1\cdots n+1}=0,\\
 \lambda_{1\cdots r-1\,\no{r}\,r+1\cdots n+1}=
 \frac{x_{n+1}}{n-(x_1+\cdots+x_n)}(1-x_r), \;\ r=1,\ldots,n,\\
 \lambda_{i_1\cdots i_k \no{i_{k+1}} \cdots \no{i_n}n+1}=0,\;\forall \{i_1,\ldots,i_k\}\subseteq \{1,\ldots,n\},\;\;
 k<n-1,\\
 \lambda_{1\cdots n\no{n+1}}=x_1+\cdots +x_{n}-n+1,\\
 \lambda_{1\cdots r-1\,\no{r}\,r+1\cdots n\no{n+1}}=\big(1-\frac{x_{n+1}}{n-(x_1+\cdots+x_n)}\big)(
 1-x_r),\;\ r=1,\ldots,n,\\
 \lambda_{i_1\cdots i_k \no{i_{k+1}} \cdots \no{i_n}\no{n+1}}=0,\; \forall \{i_1,\ldots,i_k\}\subseteq \{1,\ldots,n\},\;\;
 k<n-1.\\
 \end{array}
 \right.
 \end{equation}

 Therefore,  by exploiting the solution $\Lambda_n$ of  $(\Sigma_n^*)$,
 when $T_{L}(x_1,\ldots,x_{n})>0$
 the system $(\Sigma_{n+1}^*)$  is solvable for every $x_{n+1}\in[0,n-(x_1+\cdots+x_n)]$, with a solution given by 
the vector $\Lambda_{n+1}$ in (\ref{EQ:COMBLIN0s}) when $n-(x_1+\cdots+x_n)>0$,  that is  when $T_L(x_1,\ldots,x_n)<1$, or 
by the vector  given in (\ref{EQ:Lambda_0n+1T1}) when $n-(x_1+\cdots+x_n)=0$, that is when  $T_L(x_1,\ldots,x_n)=1$.

	Sub-case $(ii.b)$. 
	We  recall that $n-(x_1+\cdots+x_n)=1-T_{L}(x_1,\ldots,x_n)<1$  as $T_{L}(x_1,\ldots,x_n)>0$.
Moreover, 
$T_L(x_1,\ldots,x_{n+1})=x_1+\cdots+x_{n+1}-n>0$, as
	$n-x_1-\cdots-x_n<x_{n+1}\leq 1$.
Similarly to  the solution  $\Lambda_n$ 
	of $(\Sigma^*_n)$ given in (\ref{EQ:SOLUTIONTLPOS}), 
	the 
	system  $(\Sigma_{n+1}^*)$ has a   solution $\Lambda_{n+1}$ given below.
		\begin{equation}\label{EQ:SOLUTIONTLPOSn+1}
	\begin{array}{ll}
	\left\{
	\begin{array}{ll}
	\lambda_{1\cdots n+1}=  
	x_{1}+\cdots+x_{n+1}-n, \\
	\lambda_{\no{1}2\cdots n+1}=
	1-x_1,  \\
	\lambda_{1\no{2}3\cdots n+1}=
	1-x_2,\\
	\dotfill\\
	\lambda_{1\cdots r-1\,\no{r}\,r+1\cdots n+1}=
	1-x_r,\\
	\dotfill\\
	\lambda_{1\cdots \no{n}n+1}=1-x_{n},\\
	\lambda_{1\cdots n\no{n+1}}=1-x_{n+1},\\
	\lambda_{i_1\cdots i_k \no{i_{k+1}} \cdots \no{i_{n+1}}}=0, 
	\; \forall \{i_1,\ldots,i_k\}\subseteq \{1,\ldots,n+1\},\;k<n.
	\end{array}
	\right.
	\end{array}
	\end{equation}
Therefore, 
based on the solvability of $(\Sigma^*_{n})$,
 we showed that in both cases $(i)$  and $(ii)$  system $(\Sigma^*_{n+1})$ is solvable. In conclusion,
 $(\Sigma^*_n)$  is solvable for every $n\geq 2$  and    by Theorem \ref{THM:SIGMASTARn} 
  the  assessment  $(x_1,\ldots,x_{n},T_L(x_1,\ldots,x_{n}))$ 	on the family $\F=\{E_1|H_1,\ldots, E_n|H_n, \C_{1\cdots n}\}$
is coherent  for every $n$. 
Then, $\mu'(x_1,\ldots,x_n)=T_L(x_1,\ldots,x_{n})$.

	\paragraph{Coherence of 	$(x_1,\ldots ,x_n,T_M(x_1,\ldots, x_n))$}
	Without loss of generality we can assume that $x_1\leq x_2\leq \cdots\leq x_n$.  We show that the assessment $(x_1,\ldots,x_n,x_{1\cdots n})$, with $x_{1\cdots n}=T_M(x_1,\ldots, x_n)=x_1$, is  coherent. We simply observe that system $(\Sigma_n^*)$ in (\ref{EQ:SIGMA*RECALLED}) is solvable with a solution $\Lambda_n=(\lambda_{i_1\cdots i_n}; (i_1,\ldots,i_n)\in\{1,\no{1}\}\times \cdots \times \{n,\no{n}\} )$ given by 
	\begin{equation}\label{EQ:SOLSIGMANTMIN}
	\begin{array}{ll}
	\left\{
	\begin{array}{ll}
	\lambda_{1\cdots n}= \ x_{1},\\
	\lambda_{\no{1}2\cdots n}=x_{2}-x_{1},  \\
	\lambda_{\no{1}\no{2}3\cdots n}=x_{3}-x_{2},\\
	\dotfill\\
	\lambda_{\no{1}\cdots\no{r-1}r\cdots n}=x_{r}-x_{r-1}, \\
	\dotfill\\
	\lambda_{\no{1}\cdots\no{n-1}n}=x_{n}-x_{n-1},\\
	\lambda_{\no{1}\cdots\no{n}}=1-x_n,\\
	\lambda_{i_1	\cdots i_n}=0, \mbox{ otherwise}.
	\end{array}
	\right.
	\end{array}
	\end{equation}
	Indeed, based on (\ref{EQ:SOLSIGMANTMIN}), it holds that
	\begin{equation} 
	\left\{ 
	\begin{array}{ll}
	x_1=\lambda_{1\cdots n},\\
	x_2=\lambda_{1\cdots n}+\lambda_{\no{1}2\cdots n},\\
	x_{3}=\lambda_{1\cdots n}+\lambda_{\no{1}2\cdots n}+\lambda_{\no{1}\no{2}3\cdots n},\\
	\dotfill\\
	x_{r}=\sum_{h=0}^{r-1}\lambda_{\no{1}\cdots \no{h}{(h+1)}\cdots n},\\
	\dotfill\\
	x_{n}=\sum_{h=0}^{n-1}\lambda_{\no{1}\cdots \no{h}{(h+1)}\cdots n},\\
	x_{1\cdots n}=x_1=\lambda_{1\cdots n},\\
	\sum_{\{i_1,\ldots,i_k\}\subseteq \{1,\ldots,n\}}\lambda_{i_1\cdots i_k \no{i_{k+1}}\cdots \no{i_{n}}}=\sum_{h=0}^{n}\lambda_{\no{1}\cdots \no{h}{(h+1)}\cdots n}=1,
	\\
	\lambda_{i_1\cdots i_k \no{i_{k+1}} \cdots \no{i_n}}\geq 0, \;\; \forall \{i_1,\ldots,i_k\}\subseteq \{1,\ldots,n\},
	\end{array}
	\right.
	\end{equation}
	that is
the system $(\Sigma_n^*)$ in  (\ref{EQ:SIGMA*RECALLED})  is solvable.  
	Therefore, by Theorem \ref{THM:SIGMASTARn},
	the assessment $(x_1,\ldots,x_{n},x_{1\cdots n})$, with $x_{1\cdots n}=T_M(x_1,\ldots,x_{n})$, 
	on $\F=\{E_1|H_1,\ldots, E_n|H_n, \C_{1\cdots n}\}$
	is coherent. 
Then, $\mu''(x_1,\ldots,x_n)=T_M(x_1,\ldots,x_{n})$.

	Finally, the statement in (\ref{EQ:SETPIGRECOn}) is valid. 
\end{proof}

\subsection{On the relationship between  the sets $\Pi$ and $\I^*$}
In this section  we show that the set  $\Pi$ defined in  
(\ref{EQ:SETPIGRECOn})
is convex and coincides with  the set $\I^*$ defined in  (\ref{EQ:ISTAR}). We first recall  the properties of convexity and concavity of  $T_L$ and $T_M$, respectively. Given two vectors
	$\V_1=(\alpha_1,\ldots,\alpha_n) \in [0,1]^n$, $\V_2=(\beta_1,\ldots,\beta_n) \in [0,1]^n$, and any quantity $a \in [0,1]$, we set  
	\[
	\V=a \V_1+(1-a) \V_2=(\gamma_{1},\ldots,\gamma_{n}).
	\]
	Then, the following properties are satisfied:  	
	\begin{equation}\label{EQ:TLCOMB}
	(convexity) \;\;\;\;\;\; T_L(\V) = T_L(a \V_1+(1-a) \V_2) \leq a T_L(\V_1)+(1-a) T_L(\V_2), 
	\end{equation}
	\begin{equation}\label{EQ:TMMIN}
	(concavity) \;\;\;\;\;\; a T_M(\V_1)+(1-a) T_M(\V_2) \leq T_M(a\V_1+(1-a) \V_2) = T_M(\V). 
	\end{equation}
	\emph{Convexity of $T_L$.} We observe that 
	\[
	\sum_{i=1}^n [a \alpha_i + (1-a)\beta_i]-(n-1) = a \left[\sum_{i=1}^n \alpha_i - (n-1)\right] + (1-a)\left[\sum_{i=1}^n \beta_i - (n-1)\right].
	\]
	Then, we distinguish the following cases: $(i)$ $T_L(\V_1)\geq 0, T_L(\V_2) \geq 0$, or $T_L(\V_1)<0, T_L(\V_2)<0$; $(ii)$ $T_L(\V_1)\geq 0, T_L(\V_2)<0$, or $T_L(\V_1)<0, T_L(\V_2) \geq 0$.  \\
	In case $(i)$ it holds that $T_L(\V) = aT_L(\V_1)+(1-a) T_L(\V_2)$. In case $(ii)$ it holds that $T_L(\V) \leq aT_L(\V_1)+(1-a) T_L(\V_2)$. Therefore, $T_L$ is convex. \\
	\emph{Concavity of $T_M$}. We observe that 
	$T_M(\V_1) = \min \{\alpha_1,\ldots, \alpha_n\}$ and $T_M(\V_2)=\min \{\beta_1,\ldots, \beta_n\}=\beta^*$. We set $T_M(\V_1)=\alpha^*$ and $T_M(\V_2)=\beta^*$; moreover, we observe that
	\[
	a \alpha^* + (1-a) \beta^* \leq a \alpha_i + (1-a) \beta_i \,,\; i=1,\ldots,n,
	\]
	that is 
	\[
	a \alpha^* + (1-a) \beta^* \leq \min \{a \alpha_1 + (1-a) \beta_1,\ldots, a \alpha_n + (1-a) \beta_n\}=T_M(\V).
	\]
	Then
	\[
	a T_M(\V_1)+(1-a) + (1-a)T_M(\V_2) \leq T_M(\V),
	\]
	that is $T_M$ is concave.

In the next result
we show that $\Pi$ is convex and coincide with  $\I^*$.
\begin{theorem}
	Let $E_1,\ldots,E_n,H_1,\ldots,H_n$ be logically independents events, with $H_1\neq \emptyset$, \ldots, $H_n\neq \emptyset$, $n\geq 2$. The set $\Pi$ of all prevision coherent assessments $\M=(x_1,\ldots, x_n,x_{1\cdots n})$ on the family $\F=\{E_1|H_1,\ldots, E_n|H_n, \C_{1\cdots n}\}$ is convex and coincides with the set $\I^*$. \end{theorem}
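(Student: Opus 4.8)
The plan is to first observe that $\I^*$ is convex by construction, being the convex hull of the $2^n$ points listed in (\ref{EQ:ISTAR}); consequently it suffices to establish the two inclusions $\I^*\subseteq \Pi$ and $\Pi\subseteq \I^*$, after which the convexity of $\Pi$ follows at once from the identity $\Pi=\I^*$. For transparency I would also record the convexity of $\Pi$ directly, using the properties of $T_L$ and $T_M$ just proved: given two assessments $(\V_1,z_1),(\V_2,z_2)\in\Pi$ with $\V_1,\V_2\in[0,1]^n$, $z_i\in[T_L(\V_i),T_M(\V_i)]$, and any $a\in[0,1]$, the convex combination has first $n$ components $a\V_1+(1-a)\V_2\in[0,1]^n$ and last component $az_1+(1-a)z_2$. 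By the convexity inequality (\ref{EQ:TLCOMB}), $T_L(a\V_1+(1-a)\V_2)\le a T_L(\V_1)+(1-a)T_L(\V_2)\le az_1+(1-a)z_2$, and by the concavity inequality (\ref{EQ:TMMIN}), $az_1+(1-a)z_2\le a T_M(\V_1)+(1-a)T_M(\V_2)\le T_M(a\V_1+(1-a)\V_2)$, so the combination again lies in $\Pi$.

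For the inclusion $\I^*\subseteq \Pi$ I would argue as follows. If $\M\in\I^*$, then by (\ref{EQ:ISTAR}) the system $(\Sigma_n^*)$ is solvable, so by Theorem \ref{THM:SIGMASTARn} the assessment $\M$ is coherent; since by Theorem \ref{THM:FRECHETn} the set $\Pi$ in (\ref{EQ:SETPIGRECOn}) coincides with the set of all coherent prevision assessments on $\F$, we conclude $\M\in\Pi$.

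For the reverse inclusion $\Pi\subseteq\I^*$ I would reuse the explicit solutions already produced in the proof of Theorem \ref{THM:FRECHETn}. Fix $(x_1,\ldots,x_n)\in[0,1]^n$ and consider the two boundary assessments $\M'=(x_1,\ldots,x_n,T_L(x_1,\ldots,x_n))$ and $\M''=(x_1,\ldots,x_n,T_M(x_1,\ldots,x_n))$. In that proof it was shown that the systems $(\Sigma_n^*)$ associated with $\M'$ and with $\M''$ are both solvable, that is $\M',\M''\in\I^*$. As $\I^*$ is convex, every convex combination $a\M'+(1-a)\M''$, $a\in[0,1]$, lies in $\I^*$; but since $\M'$ and $\M''$ share the same first $n$ components, this combination equals $(x_1,\ldots,x_n,\,a T_L(x_1,\ldots,x_n)+(1-a)T_M(x_1,\ldots,x_n))$, whose last coordinate sweeps the entire interval $[T_L(x_1,\ldots,x_n),T_M(x_1,\ldots,x_n)]$ as $a$ ranges over $[0,1]$. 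Hence every point of $\Pi$ belongs to $\I^*$.

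Combining the two inclusions gives $\Pi=\I^*$, and since $\I^*$ is convex so is $\Pi$. The delicate point is the direction $\Pi\subseteq\I^*$: coherence of the two endpoints is not by itself enough, since coherence corresponds only to membership in the larger convex hull $\I$ together with the condition on $I_0$; what is actually needed is that the explicit solutions found earlier witness membership of the two boundary points in the \emph{specific} hull $\I^*$ of the $2^n$ binary vertices, so that the convexity of $\I^*$ can be invoked to fill in the whole vertical fibre above each $(x_1,\ldots,x_n)$.
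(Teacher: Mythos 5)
Your proof is correct and follows essentially the same route as the paper: the direct convexity argument via the convexity of $T_L$ and concavity of $T_M$, the inclusion $\I^*\subseteq\Pi$ via Theorem \ref{THM:SIGMASTARn}, and the inclusion $\Pi\subseteq\I^*$ by decomposing $x_{1\cdots n}$ as a convex combination of $T_L(x_1,\ldots,x_n)$ and $T_M(x_1,\ldots,x_n)$ and using the solvability of $(\Sigma_n^*)$ at the two boundary assessments established in the proof of Theorem \ref{THM:FRECHETn}. The only cosmetic difference is that you invoke the convexity of $\I^*$ abstractly where the paper explicitly writes the solution $\Lambda=\alpha\Lambda_L+(1-\alpha)\Lambda_M$, and your closing remark correctly pinpoints the one delicate step, namely that the boundary points must be shown to lie in $\I^*$ itself rather than merely be coherent.
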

\begin{proof}
Let	$\M_1=(\alpha_1,\ldots,\alpha_n,\alpha_{1\cdots n})$ and $\M_2=(\beta_1,\ldots,\beta_n,\beta_{1\cdots n})$ be two coherent assessments on $\F$. Given any $a \in [0,1]$ we show that the assessment 
\[
\M=a \M_1+(1-a) \M_2=(\gamma_{1},\ldots,\gamma_{n},\gamma_{1\cdots n})
\]
on $\F$ is coherent.	
From (\ref{EQ:SETPIGRECOn}) it holds that $(\alpha_1,\ldots,\alpha_n)\in[0,1]^n$ and 
$(\beta_1,\ldots,\beta_n)\in[0,1]^n$. Then,  $(\gamma_{1},\ldots,\gamma_{n})\in[0,1]^n$. Moreover,
 \[
 T_L(\alpha_1,\ldots,\alpha_n)\leq \alpha_{1\cdots n}\leq T_M(\alpha_1,\ldots,\alpha_n)
,\;\; 
 T_L(\beta_1,\ldots,\beta_n)\leq \beta_{1\cdots n}\leq T_M(\beta_1,\ldots,\beta_n).
 \]
Then,
by taking into account that  $\gamma_{1\cdots n}=a\alpha_{1\cdots n}+(1-a)\beta_{1\cdots n}$, it follows
\[
\begin{array}{ll}
aT_L(\alpha_1,\ldots,\alpha_n)+(1-a)T_L(\beta_1,\ldots,\beta_n)\leq \gamma_{1\cdots n}\leq aT_M(\alpha_1,\ldots,\alpha_n)+(1-a)T_M(\beta_1,\ldots,\beta_n).
\end{array}
\]
By recalling (\ref{EQ:TLCOMB}) and (\ref{EQ:TMMIN}), it holds that 
\[
T_L(\gamma_1,\ldots, \gamma_n)\leq 
aT_L(\alpha_1,\ldots,\alpha_n)+(1-a)T_L(\beta_1,\ldots,\beta_n)\leq \gamma_{1\cdots n}.
\]
and
\[
\gamma_{1\cdots n} \leq aT_M(\alpha_1,\ldots,\alpha_n)+(1-a)T_M(\beta_1,\ldots,\beta_n)\leq T_M(\gamma_1,\ldots, \gamma_n).
\]
Finally, by observing that
\[
(\gamma_1,\ldots,\gamma_n) \in [0,1]^n \,,\;\; T_L(\gamma_1,\ldots, \gamma_n) \leq \gamma_{1\cdots n} \leq T_M(\gamma_1,\ldots, \gamma_n),
\]
it follows that $\M$ is coherent, that is $\M \in \Pi$; thus $\Pi$ is convex. 

We now show that $\Pi=\I^*$.
 For each assessment $\M$ on $\F$, by Theorem \ref{THM:SIGMASTARn}, if
$\M\in\I^*$, then $\M$ is coherent.
Thus
$\I^*\subseteq \Pi$. Then, in order to complete the proof we need to show that $\Pi\subseteq \I^*$.
Given any  $\M=(x_1,\ldots,x_n,x_{1\cdots n})\in \Pi$,
by Theorem \ref{THM:FRECHETn} it holds that 
	 $x_{1\cdots n}\in [T_L(x_1,\ldots,x_{n}),T_M(x_1,\ldots,x_{n})]$. Then,  there exists $\alpha\in [0,1]$ such that
	\[
	x_{1\cdots n}=\alpha T_L(x_1,\ldots,x_{n})+(1-\alpha)T_M(x_1,\ldots,x_{n}),
	\]
and  hence
	\[
	\M=(x_1,\ldots,x_n,x_{1\cdots n})=\alpha(x_1,\ldots,x_n,T_L(x_1,\ldots,x_{n})) +(1-\alpha)(x_1,\ldots,x_n,T_M(x_1,\ldots,x_{n})).
	\]
We denote  by $\Lambda_L$, or $\Lambda_M$, a solution of the system  $(\Sigma^*_n)$ associated with the assessment  $(x_1,\ldots,x_n,T_L(x_1,\ldots,x_{n}))$, or the assessment
	$(x_1,\ldots,x_n,T_M(x_1,\ldots,x_{n}))$, respectively. 
Then, 
	 the vector 	$
	\Lambda=\alpha\Lambda_{L}+(1-\alpha)\Lambda_{M}
	$ is
	 a solution of the system $(\Sigma^*_n)$ associated with the assessment  $\M=(x_1,\ldots,x_n,x_{1\cdots n})$; thus  $\M\in\I^*$, so that  $\Pi\subseteq \I^*$.
	 Therefore $\I^*=\Pi$. 
\end{proof}
\subsection{An illustration of  
sharpness  of the  Fr\'echet-Hoeffding bounds
 in the case of  three conditional events}
\label{SEC:7}
In this section, to better understand the previous general results, we illustrate some details which concern the case of  three conditional events. 
Given any logically independent events $E_1,E_2,E_3,H_1,H_2,H_3$, 
let $\M=(x_1,x_2,x_3,x_{12 3})$ be a prevision assessment on $\F=\{E_1|H_1,E_2|H_2, E_3|H_3, \C_{123}\}$. 
The constituents  $C_h$'s which imply $ H_1H_2 H_3$ are 
\[
\begin{array}{ll}
K_{123}=E_1E_2E_3H_1H_2 H_3,
K_{1\no{2}3}=E_1\no{E}_2E_3H_1H_2 H_3,
K_{\no{1}23}=\no{E}_1E_2E_3H_1H_2 H_3,
K_{\no{1}\no{2}3}=\no{E}_1\no{E}_2E_3H_1H_2 H_3,\\
K_{12\no{3}}=E_1E_2\no{E}_3H_1H_2 H_3,
K_{1\no{2}\no{3}}=E_1\no{E}_2\no{E}_3H_1H_2 H_3,
K_{\no{1}2\no{3}}=\no{E}_1E_2\no{E}_3H_1H_2 H_3,
K_{\no{1}\no{2}\no{3}}=\no{E}_1\no{E}_2\no{E}_3H_1H_2 H_3.
\end{array}
\]
The associated points  $Q_{123},\ldots, Q_{\no{1}\no{2}\no{3}}$
are
\[
\begin{array}{ll}
Q_{123}=(1,1,1,1),\; 
Q_{1\no{2}3}=(1,0,1,0),\; 
Q_{\no{1}23}=(0,1,1,0),\; 
Q_{\no{1}\no{2}3}=(0,0,1,0),\; \\
Q_{12\no{3}}=(1,1,0,0),\; 
Q_{1\no{2}\no{3}}=(1,0,0,0),\; 
Q_{\no{1}2\no{3}}=(0,1,0,0),\; 
Q_{\no{1}\no{2}\no{3}}=(0,0,0,0).\; 
\end{array}
\]
In this case system $(\Sigma_n^*)$ in (\ref{EQ:SIGMA*RECALLED}
becomes 
\[
(\Sigma_3^*)\left\{
\begin{array}{ll}
x_{123}=\lambda_{123},\;\;\\
x_1=\lambda_{123}+\lambda_{1\no{2}3}+\lambda_{12\no{3}}+\lambda_{1\no{2}\no{3}},\\
x_2=\lambda_{123}+\lambda_{\no{1}23}+\lambda_{12\no{3}}+\lambda_{\no{1}2\no{3}},\\
x_3=\lambda_{123}+\lambda_{1\no{2}3}+\lambda_{\no{1}23}+\lambda_{\no{1}\no{2}3},\\
\lambda_{123}
+\cdots
+\lambda_{\no{1}\no{2}\no{3}}=1,\;\;
\lambda_{123}\geq 0,\ldots, \lambda_{\no{1}\no{2}\no{3}} \geq 0. \;\; 
\end{array}
\right.
\]
By recalling the proof of Theorem (\ref{THM:FRECHETn}), 
we illustrate  below  the structure of the vector $\Lambda_3=(\lambda_{123},
\lambda_{1\no{2}3},
\lambda_{\no{1}23},
\lambda_{\no{1}\no{2}3},
\lambda_{12\no{3}},
\lambda_{1\no{2}\no{3}},
\lambda_{\no{1}2\no{3}},
\lambda_{\no{1}\no{2}\no{3}})$, solution of $(\Sigma_3^*)$, in the different cases. 

\paragraph{Assessment $(x_1,x_2,x_3,x_{123})$,  with $x_{123}=T_L(x_1,x_2,x_3)$}~\\
We have  two cases:  $(i)$ $T_L(x_1,x_2)=0$, that is $x_1+x_2-1\leq0 $; $(ii)$ $T_L(x_1,x_2)> 0$, that is $x_1+x_2-1>0$.\\ 
In case  $(i)$ $T_L(x_1,x_2)=T_L(x_1,x_2,x_3)=0$ and  we have three sub-cases: $x_3=0$, or	$x_3=1$, or  $0<x_3<1$.\\
If $x_3=0$	the system  $(\Sigma^*_3)$ has a solution
\begin{equation*}{\label{EQ:LAMBDA_0}}
\begin{array}{ll}
\Lambda_{3,0}=(0,0,0,0,0,x_1,x_2,1-x_1-x_2).
\end{array}
\end{equation*}
If $x_3=1$	the system  $(\Sigma^*_3)$ has a solution
\begin{equation*}\label{EQ:LAMBDA_1}
\begin{array}{ll}
\Lambda_{3,1}=
(0,x_1,x_2,1-x_1-x_2,0,0,0,0).
\end{array}
\end{equation*}
If $0<x_3<1$
the system  $(\Sigma^*_3)$ has a solution
\begin{equation}\label{EQ:LAMBDAx_3}
\begin{array}{ll}
\Lambda_3=(1-x_3)\Lambda_{3,0}+x_3\Lambda_{3,1}=\\
=(0,x_1x_3,x_2x_3,(1-x_1-x_2)x_3,0,x_1(1-x_3),x_2(1-x_3),(1-x_1-x_2)(1-x_3 )).
\end{array}
\end{equation}
In case  $(ii)$, where $T_L(x_1,x_2)>0$,
we have   two  sub-cases:\\
$(ii.a)$ $0\leq x_{3}\leq 2-x_1-x_2<1$;
$(ii.b)\, 
2-x_1-x_2<x_{3}\leq 1$.\\
Sub-case $(ii.a)$. We  have three cases:\\ $(ii.a.1)$ $x_{3}=0$;  $(ii.a.2)$ $x_{3}=2-x_1-x_2$; $(ii.a.3)$ $0<x_{3}<2-x_1-x_2$.\\
If $x_3=0$ the system  $(\Sigma^*_3)$ has a solution
\begin{equation}\label{EQ:LAMBDA0'}
\begin{array}{ll}	\Lambda_{3,0}
=(0,0,0,0,x_1+x_2-1,1-x_2,1-x_1,0).
\end{array}
\end{equation}
If $x_3=2-x_1-x_2=0$,  
a solution of 
$(\Sigma^*_3)$ is the vector in  (\ref{EQ:LAMBDA0'}).
\\ 
If $x_3=2-x_1-x_2>0$, 
by setting $s=2-x_1-x_2$,
the system  $(\Sigma^*_3)$ has a solution
\begin{equation*}\label{EQ:LAMBDAS}
\begin{array}{ll}
\Lambda_{3,s}
=(0,1-x_2,1-x_1,0,x_1+x_2-1,0,0,0).
\end{array}
\end{equation*}
If $0<x_3<2-x_1-x_2$	
the system  $(\Sigma^*_3)$ has a solution
\begin{equation}\label{EQ:LAMBDA'}
\begin{array}{ll}
\Lambda_{3}=(1-\frac{x_3}{2-x_1-x_2})\Lambda_{3,0}+\frac{x_3}{2-x_1-x_2}\Lambda_{3,s}=\\
=\left(0,\frac{\left(1-x_2\right) x_3}{2-x_1-x_2},\frac{\left(1-x_1\right) x_3}{2-x_1-x_2},0,x_1+x_2-1,
(1-\frac{x_3}{2-x_1-x_2})(1-x_2),
(1-\frac{x_3}{2-x_1-x_2})(1-x_1),0\right).
\end{array}
\end{equation}
Sub-case $(ii.b)$.
The system $(\Sigma^*_3)$  has a solution 
\begin{equation*}\label{EQ:LAMBDA''}
\begin{array}{ll}
\Lambda_3
=(
x_1+x_2+x_3-2,
1-x_2,1-x_1,0,1-x_3,0,0,0).
\end{array}
\end{equation*}

\paragraph{Assessment  $(x_1,x_2,x_3,x_{123})$,  with $x_{123}=T_M(x_1,x_2,x_3)$}
We  assume that $0\leq x_1\leq x_2\leq x_3\leq 1$, so that $x_{123}=T_M(x_1,x_2,x_3)=x_1$. 
The system $(\Sigma^*_3)$ has a   solution  
\begin{equation*}\label{EQ:LAMBDAMIN}
\begin{array}{ll}
\Lambda_3=(\lambda_{123},
\lambda_{1\no{2}3},
\lambda_{\no{1}23},
\lambda_{\no{1}\no{2}3},
\lambda_{12\no{3}},
\lambda_{1\no{2}\no{3}},
\lambda_{\no{1}2\no{3}},
\lambda_{\no{1}\no{2}\no{3}})
= (x_1,0,x_2-x_1,x_3-x_2,0,0,0,1-x_3).
\end{array}
\end{equation*}
We give below two examples.
\begin{example}
	Let  $(x_1,x_2,x_3,T_L(x_1,x_2,x_3))=(0.4,0.4,0.4,0)$ be a prevision assessment on $\F=\{E_1|H_1,E_2|H_2,E_3|H_3,\C_{123}\}$. We observe that  $x_1+x_2-1=-0.2<0$; then, based on 	(\ref{EQ:LAMBDAx_3}),  
	the system  $(\Sigma^*_3)$ has a
	solution 
	\[
	\begin{array}{l}
	\Lambda_3=(\lambda_{123},
	\lambda_{1\no{2}3},
	\lambda_{\no{1}23},
	\lambda_{\no{1}\no{2}3},
	\lambda_{12\no{3}},
	\lambda_{1\no{2}\no{3}},
	\lambda_{\no{1}2\no{3}},
	\lambda_{\no{1}\no{2}\no{3}})
	=(0,    0.16,   0.16,    0.08,         0,    0.24,    0.24,    0.12).
	\end{array}
	\]
\end{example}
\begin{example}
	Let  $(x_1,x_2,x_3,T_L(x_1,x_2,x_3))=(0.5,0.6,0.7,0)$ be a prevision assessment on $\F=\{E_1|H_1,E_2|H_2,E_3|H_3,\C_{123}\}$.  As $x_1+x_2-1=0.1>0$, based on 	(\ref{EQ:LAMBDA'}),  
	the system  $(\Sigma^*_3)$ has a
	solution 
	\[
	\begin{array}{l}
	\Lambda_3=(\lambda_{123},
	\lambda_{1\no{2}3},
	\lambda_{\no{1}23},
	\lambda_{\no{1}\no{2}3},
	\lambda_{12\no{3}},
	\lambda_{1\no{2}\no{3}},
	\lambda_{\no{1}2\no{3}},
	\lambda_{\no{1}\no{2}\no{3}})
	=( 0,  \frac{14}{45},  \frac{7}{18}, 0,   \frac{1}{10}, \frac{ 4}{45}, \frac{1}{9},     0
	).
	\end{array}
	\] 
\end{example}

\section{On the  computation of  $\Lambda_{n+1}$ when
 $x_{1\cdots n+1}=T_L(x_1,\ldots,x_{n+1})$
} 
\label{SEC:4}
In this section  we examine further aspects which concern the prevision
assessment  $(x_1,\ldots,x_{n+1},T_L(x_1,\ldots,x_{n+1}))$. We observe that in the
  proof of  Theorem \ref{THM:FRECHETn} the explicit solution $\Lambda_{n+1}$ of the system $(\Sigma_{n+1}^*)$ is given 
for the assessment  $(x_1,\ldots,x_{n+1},T_L(x_1,\ldots,x_{n+1})$, when $T_L(x_1,\ldots,x_n)>0$.
In the case where $T_L(x_1,\ldots,x_n)=0$, for the vector $\Lambda_{n+1}$ we only have the representation given in (\ref{EQ:Lambda_n+1}) in terms of the solution $\Lambda_{n}$. 
In what follows we give an explicit formula for  $\Lambda_{n+1}$ when $T_L(x_1,\ldots,x_n)=0$.

 Given any integer $n\geq  1$, we distinguish two cases: $(i)$ 
 $T_L(x_1,\ldots,x_h)=0$, for all $h=1,\ldots, n$;  
 $(ii)$ 
 $ 
 T_L(x_1)>0, \ldots, T_L(x_1,\ldots,x_{h})>0$, $T_L(x_1,\ldots,x_{h+1})=\cdots =T_L(x_1,\ldots,x_n)=0$, for some $h$ such that $1\leq h<n$.
 \\
 Case $(i)$. If $n=1$,  the assessment is $(x_1,x_2,T_L(x_1,x_2))$ and
$T_L(x_1)=x_1=0$. Moreover, the (unique) solution of $(\Sigma_1^*)$ is
$\Lambda_1=(\lambda_{1},\lambda_{\no{1}})=(x_1,1-x_1)=(0,1)$.
Then, by applying (\ref{EQ:Lambda_n+1}) with $n=1$, we obtain the solution
\begin{equation}
\label{EQ:Lambda_2_0}
\Lambda_2=
(\lambda_{12},\lambda_{\no{1}2},
\lambda_{1\no{2}},\lambda_{\no{1}\no{2}})=
(x_2\Lambda_1,(1-x_2)\Lambda_1)
=
(x_2(0,1),(1-x_2)(0,1))=(0,x_2,0,1-x_2).
\end{equation}
If $n=2$, the assessment is $(x_1,x_2,x_3,T_L(x_1,x_2,x_3))$ and
$T_L(x_1)=x_1= T_L(x_1,x_2)=0$.
Moreover, as $x_1=0$, it holds that  
$\Lambda_2$ is the vector given in (\ref{EQ:Lambda_2_0}) and by applying   (\ref{EQ:Lambda_n+1}) with $n=2$, a solution  for $(\Sigma_{3}^*)$  is given by
\[
\begin{array}{ll}
\Lambda_3=
(\lambda_{123},\lambda_{\no{1}23},
\lambda_{1\no{2}3},\lambda_{\no{1}\no{2}3},
\lambda_{12\no{3}},\lambda_{\no{1}2\no{3}},
\lambda_{1\no{2}\no{3}},\lambda_{\no{1}\no{2}\no{3}})=
(x_3\Lambda_2,(1-x_3)\Lambda_2)=\\
=
(0,x_2x_3,0,(1-x_2)x_3,0,x_2(1-x_3),0,(1-x_2)(1-x_3)).
\end{array}
\]
More in general, by iterating   (\ref{EQ:Lambda_n+1}) we obtain
\[
\begin{array}{ll}
\Lambda_{n+1}=
(\lambda_{1\cdots n+1},
\ldots,  \lambda_{\no{1}\cdots \no{n}n+1},
\lambda_{1\cdots n\no{n+1}},
\ldots,  \lambda_{\no{1}\cdots \no{n}\no{n+1}}
)=\\
=(x_{1}\cdots x_{n}x_{n+1},\ldots,
(1-x_{1})\cdots (1-x_{n})x_{n+1},
x_{1}\cdots x_{n}(1-x_{n+1}),\ldots,
(1-x_{1})\cdots (1-x_{n})(1-x_{n+1})),
\end{array}
\]
where $x_1=0$. Alternatively, 
by setting
\[
\begin{array}{ll}
\Lambda_{n+1}
=(\lambda_{{1^*}\cdots {(n+1)^*}}; (1^*,\ldots, (n+1)^*)\in\{1,\no{1}\}\times \cdots\times\{n+1,\no{n+1}\}
),
\end{array}
\]
and 
\begin{equation}\label{EQ:NOTATION}
x_{j^*}=\left\{ 
\begin{array}{ll}
x_{j}, \mbox{ if } j^*=j,\\
x_{\no{j}}=1-x_{j}, \mbox{if } j^*=\no{j},\\
\end{array}
\right.
\end{equation}
it holds that
\begin{equation}\label{EQ:TL=0hn}
\lambda_{{1^*}\cdots {(n+1)^*}}=
\prod_{j=1}^{n+1}x_{j^*},\;\; (1^*,\ldots, (n+1)^*)\in\{1,\no{1}\}\times \cdots\times\{n+1,\no{n+1}\},
\end{equation}
where $\prod_{j=1}^{n+1}x_{j^*}=0$, if $1^*=1$, because $x_1=0$, and
$\prod_{j=1}^{n+1}x_{j^*}=
\prod_{j=2}^{n+1}x_{j^*}$, if $1^*=\no{1}$, because $x_{\no{1}}=1-x_1=1$. 
\\
Case $(ii)$.   
For $t=h+1,\ldots, n$, it holds that
 $T_{L}(x_1,\ldots,x_{t+1})=0$ and, from  (\ref{EQ:Lambda_n+1}), it holds that
	\begin{equation*}\label{}
\begin{array}{ll}
\Lambda_{t+1}=
(\lambda_{1\cdots t+1},
\ldots,  \lambda_{\no{1}\cdots \no{t}t+1},
\lambda_{1\cdots t\no{t+1}},
\ldots,  \lambda_{\no{1}\cdots \no{t}\no{t+1}}
)=\\
=(x_{t+1}\lambda_{1\cdots t},
\ldots,  x_{t+1}\lambda_{\no{1}\cdots \no{t}}
,(1-x_{t+1})\lambda_{1\cdots t},
\ldots,  (1-x_{t+1})\lambda_{\no{1}\cdots \no{t}}).
\end{array}
\end{equation*}
Then, based on the representations
\[
\begin{array}{ll}
\Lambda_{t}
=(\lambda_{{1^*}\cdots {t^*}}; (1^*,\ldots, t^*)\in\{1,\no{1}\}\times \cdots\times\{t,\no{t}\}
)
,
\end{array}
\]
and 
\[
\begin{array}{ll}
\Lambda_{t+1}
=(\lambda_{{1^*}\cdots {(t+1)^*}}; (1^*,\ldots, (t+1)^*)\in\{1,\no{1}\}\times \cdots\times\{t+1,\no{t+1}\}
),
\end{array}
\]
based on 
(\ref{EQ:NOTATION}), for the  components $\lambda_{1^* \cdots (t+1)^*}$ and $\lambda_{1^* \cdots t^*}$ it holds that
 \begin{equation} \label{EQ:lambda*}
\lambda_{1^* \cdots (t+1)^{*}}=\lambda_{1^* \cdots t^*}\cdot x_{(t+1)^*},
\end{equation} 
that is
 \begin{equation*}
\left\{ 
\begin{array}{ll}
\lambda_{1^* \cdots t^*t+1}=\lambda_{1^* \cdots t^*}\cdot x_{t+1},\\
\lambda_{1^* \cdots t^*\no{t+1}}=\lambda_{1^* \cdots t^*}\cdot (1-x_{t+1}),
\end{array}
\right.
\end{equation*} 
for every $(1^*,\ldots, t^*)\in\{1,\no{1}\}\times \cdots\times\{t,\no{t}\}
$, $t=h+1,\ldots, n$.

By iterating  (\ref{EQ:lambda*}) backward from $t=n$ until $t=h+1$, it follows that
\begin{equation}\label{EQ:DAn+1Ah+1}
\lambda_{1^*\cdots  (n+1)^*}=
\lambda_{1^*\cdots n^*}\cdot x_{(n+1)^*}=
\lambda_{1^*\cdots (n-1)^*}\cdot x_{n^*}\cdot x_{(n+1)^*}=
\cdots=
\lambda_{1^*\cdots (h+1)^*}\prod_{t=h+2}^{n+1}
x_{t^*}.
\end{equation}
Thus, in order to determine the vector $\Lambda_{n+1}$ we need to compute  the vector $\Lambda_{h+1}$. We examine below this aspect.

If $0<T_L(x_1,\ldots,x_h)<1$, as $T_L(x_1,\ldots,x_{h+1})=0$ it holds that $0\leq x_{h+1}\leq h-x_1-\cdots-x_h$;  then 
from (\ref{EQ:RELNPOSEN+1ZERO})
we obtain
\begin{equation} \label{EQ:TLh>0h+1=0}
\left\{ 
\begin{array}{ll}
\lambda_{1\cdots h+1}=0,\\
\lambda_{1\cdots r-1\,\no{r}\,r+1\cdots h+1}=\frac{x_{h+1}}{h-(x_1+\cdots+x_h)}(1-x_r), \;\ r=1,\ldots,h,\\
\lambda_{i_1\cdots i_k \no{i_{k+1}} \cdots \no{i_h}h+1}=0,\;\forall \{i_1,\ldots,i_k\}\subseteq \{1,\ldots,h\},\;\;
k<h-1,\\
\lambda_{1\cdots h\no{h+1}}=x_1+\cdots +x_{h}-h+1,\\
\lambda_{1\cdots r-1\,\no{r}\,r+1\cdots h\no{h+1}}=\left(1-\frac{x_{h+1}}{h-(x_1+\cdots+x_h)}\right)(1-x_r),\;\ r=1,\ldots,h,\\
\lambda_{i_1\cdots i_k \no{i_{k+1}} \cdots \no{i_h}\no{h+1}}=0, \; \forall \{i_1,\ldots,i_k\}\subseteq \{1,\ldots,h\},\;\;
k<h-1.
\end{array}
\right.
\end{equation}
Then, concerning  the components of  $\Lambda_{n+1}$, 
for every 
\[
((h+2)^*,\ldots,(n+1)^*)\in \{h+2,\no{h+2}\}\times \cdots \times \{n+2,\no{n+2}\},
\]
from (\ref{EQ:DAn+1Ah+1}) and 
(\ref{EQ:TLh>0h+1=0}) 
it follows that 
\begin{equation} \label{EQ:TLh=0h+1=0}
\left\{
\begin{array}{ll}
\lambda_{1\cdots h+1(h+2)^*\cdots (n+1)^*}=0,  \\ 
\lambda_{1\cdots r-1\,\no{r}\,r+1\cdots h+1 (h+2)^*\cdots (n+1)^*}=
(1-x_r)
\frac{x_{h+1}}{h-(x_1+\cdots+x_h)}\prod_{t=h+2}^{n+1}
x_{t^*},\;\; r=1,\ldots,h,\\ 
\lambda_{i_1\cdots i_k \no{i_{k+1}} \cdots \no{i_h}h+1(h+2)^*\cdots (n+1)^*}=0, \; \forall \{i_1,\ldots,i_k\}\subseteq \{1,\ldots,h\},\;\;
k<h-1,
\\
\lambda_{1\cdots r-1\,\no{r}\,r+1\cdots h \no{h+1} (h+2)^*\cdots (n+1)^*}=
(1-x_r)
\left(1-\frac{x_{h+1}}{h-(x_1+\cdots+x_h)}\right)\prod_{t=h+2}^{n+1}x_{t^*}
, \;\; r=1,\ldots,h,\\ 
\lambda_{1\cdots h\no{h+1} (h+2)^*\cdots (n+1)^*}
=(x_1+\cdots +x_{h}-h+1)\prod_{t=h+2}^{n+1}
x_{t^*},\\ 
\lambda_{i_1\cdots i_k \no{i_{k+1}} \cdots \no{i_h}\no{h+1}(h+2)^*\cdots (n+1)^*}=0, \; \forall \{i_1,\ldots,i_k\}\subseteq \{1,\ldots,h\},\;\;
k<h-1.
\end{array}
\right.
\end{equation}
Of course the sum of all the components of $\Lambda_{n+1}$ is equal to 1, indeed
\[
\begin{array}{ll}
(1-x_r)
\frac{x_{h+1}}{h-(x_1+\cdots+x_h)}\prod_{t=h+2}^{n+1}
x_{t^*}+(1-x_r)
\left(1-\frac{x_{h+1}}{h-(x_1+\cdots+x_h)}\right)\prod_{t=h+2}^{n+1}=(1-x_r)\prod_{t=h+2}^{n+1}
x_{t^*},
\end{array}
\]
\[
\sum_{r=1}^{h}(1-x_r)\prod_{t=h+2}^{n+1}
x_{t^*}=(h-(x_1+\cdots+x_h))\prod_{t=h+2}^{n+1}
x_{t^*},
\]
\[
(x_1+\cdots+x_h-h+1)\prod_{t=h+2}^{n+1}x_{t^*}+(h-(x_1+\cdots+x_h))\prod_{t=h+2}^{n+1}
x_{t^*}=\prod_{t=h+2}^{n+1}
x_{t^*};
\]
finally
\[
\begin{array}{ll}
\sum_{((h+2)^*,\ldots,(n+1)^*)}\prod_{t=h+2}^{n+1}
x_{t^*}=
\sum_{((h+2)^*,\ldots,n^*)}
\prod_{t=h+2}^{n}x_{t^*}(x_{n+1}+1-x_{n+1})=\\
=
\sum_{((h+2)^*,\ldots,n^*)}
\prod_{t=h+2}^{n}x_{t^*}=\ldots=x_{h+2}+(1-x_{h+2})=1.
\end{array}
\]

If $T_L(x_1,\ldots,x_h)=1$, as $T_L(x_1,\ldots,x_{h+1})=0$ it holds that $x_{h+1}=0$;  then, 
concerning the vector $\Lambda_{h+1}$, from 
(\ref{EQ:Lambda_0n+1T1}) we obtain that 
\begin{equation} \label{EQ:lambda1}
\left\{ 
\begin{array}{ll}
\lambda_{1\cdots h+1}=0,\\
\lambda_{1\cdots r-1\,\no{r}\,r+1\cdots h+1}=0, \;\ r=1,\ldots,h,\\
\lambda_{i_1\cdots i_k \no{i_{k+1}} \cdots \no{i_h}h+1}=0,\; \forall \{i_1,\ldots,i_k\}\subseteq \{1,\ldots,h\},\;\;
k<h-1,\\
\lambda_{1\cdots h\no{h+1}}=1,\\
\lambda_{1\cdots r-1\,\no{r}\,r+1\cdots h\no{h+1}}=0,\;\ r=1,\ldots,h,\\
\lambda_{i_1\cdots i_k \no{i_{k+1}} \cdots \no{i_h}\no{h+1}}=0,\; \forall \{i_1,\ldots,i_k\}\subseteq \{1,\ldots,h\},\;\;
k<h-1,\\
\end{array}
\right.
\end{equation}
that is the vector $\Lambda_{h+1}$ has the component $\lambda_{1\cdots h\no{h+1}}$ equal to 1 and all the other components equal to zero.
Then, concerning  the components of  $\Lambda_{n+1}$, 
for every 
\[
((h+2)^*,\ldots,(n+1)^*)\in \{h+2,\no{h+2}\}\times \cdots \times \{n+2,\no{n+2}\},
\]
from (\ref{EQ:DAn+1Ah+1}) and (\ref{EQ:lambda1}) it follows that
\begin{equation} \label{EQ:TLh=1h+1=0}
\left\{
\begin{array}{ll}
\lambda_{1\cdots h+1(h+2)^*\cdots (n+1)^*}=0,  \\ 
\lambda_{1\cdots r-1\,\no{r}\,r+1\cdots h+1 (h+2)^*\cdots (n+1)^*}=
0,\;\; r=1,\ldots,h,\\ 
\lambda_{i_1\cdots i_k \no{i_{k+1}} \cdots \no{i_h}h+1(h+2)^*\cdots (n+1)^*}=0,\; \forall \{i_1,\ldots,i_k\}\subseteq \{1,\ldots,h\},\;\;
k<h-1
\\
\lambda_{1\cdots h\no{h+1}(h+2)^*\cdots (n+1)^*}=\prod_{t=h+2}^{n+1}x_{t^*},\\
\lambda_{1\cdots r-1\,\no{r}\,r+1\cdots h \no{h+1} (h+2)^*\cdots (n+1)^*}=
0
, \;\; r=1,\ldots,h,\\ 
\lambda_{i_1\cdots i_k \no{i_{k+1}} \cdots \no{i_h}\no{h+1}(h+2)^*\cdots (n+1)^*}=0,\; \forall \{i_1,\ldots,i_k\}\subseteq \{1,\ldots,h\},\;\;
k<h-1.
\end{array}
\right.
\end{equation}
\begin{remark}\label{REM:SUM}
In summary, concerning  the problem of giving an explicit solution $\Lambda_{n+1}$ of the system $(\Sigma_{n+1}^*)$ 
for the assessment  $(x_1,\ldots,x_{n+1},T_L(x_1,\ldots,x_{n+1})$,  we distinguish the following cases:
\begin{enumerate}
\item[$(a)$]  $T_L(x_1,\ldots,x_n)=1$ and  $T_L(x_1,\ldots,x_{n+1})=0$ (in which case  $x_{n+1}=n-\sum_{i=1}^nx_i=0$); the solution is given in 
	(\ref{EQ:Lambda_0n+1T1}).
	\item[$(b)$]  $0<T_L(x_1,\ldots,x_n)<1$ and  $T_L(x_1,\ldots,x_{n+1})=0$ (in which case  $0\leq x_{n+1}\leq n-\sum_{i=1}^nx_i$, with $0<n-\sum_{i=1}^nx_i<1$);  the solution is given in (\ref{EQ:RELNPOSEN+1ZERO}).
	\item[$(c)$] 
	$T_L(x_1,\ldots,x_n)>0$ and
	 $T_L(x_1,\ldots,x_n+1)>0$
	(in which case
	$ n-\sum_{i=1}^nx_i<x_{n+1}\leq 1$);  the solution is given in 
(\ref{EQ:SOLUTIONTLPOSn+1}).
	\item[$(d)$] 
 $T_L(x_1,\ldots,x_h)=0$, $h=1,\ldots,n+1$
(in which case
$ x_1=0$);  the solution is given in 
(\ref{EQ:TL=0hn}).
\item[$(e)$] 
 $0<T_L(x_1,\ldots,x_{h})<1$,
 $T_L(x_1,\ldots,x_{h+1})=\cdots =T_L(x_1,\ldots,x_{n+1})=0$, with $1\leq h<n$;
the solution is given in (\ref{EQ:TLh=0h+1=0}).
\item[$(f)$] 
 $T_L(x_1,\ldots,x_{h})=1$, $T_L(x_1,\ldots,x_{h+1})=\cdots =T_L(x_1,\ldots,x_{n+1})=0$,  with $1\leq h<n$; 
the solution is given in (\ref{EQ:TLh=1h+1=0}).
\end{enumerate}	
\end{remark}

We illustrate below the cases $(e)$ and $(f)$ by an example where $n+1=5$.
\begin{example}
Given any logically independent events. 		Let $E_1,\ldots,E_5, H_1,\ldots, H_5$ be logically 
	Let  $(x_1,\ldots,x_5,x_{1\cdots5})$, with $x_{1\cdots5}=T_L(x_1,\ldots,x_5)$, be a prevision assessment on $\{E_1|H_1,\ldots, E_5|H_5,\C_{1\cdots 5}\}$, where $\C_{1\cdots 5}=\bigwedge_{i=1}^5 E_i|H_i$. 
	We examine   below all the cases of Remark~\ref{REM:SUM}.
\\
\begin{enumerate}
	\item[$(a)$]  $T_L(x_1,\ldots,x_4)=1$ and  $T_L(x_1,\ldots,x_{5})=0$ (in which case $x_1=\cdots=x_4=1$ and  $x_{5}=4-\sum_{i=1}^4x_i=0$); the solution obtained from 
	(\ref{EQ:Lambda_0n+1T1}) is such that $\lambda_{1234\no{5}}=1$, with all the other components of $\Lambda_5$ equal to zero.
	\item[$(b)$]  $0<T_L(x_1,\ldots,x_4)<1$ and  $T_L(x_1,\ldots,x_{5})=0$ (in which case  $0\leq x_{5}\leq 4-\sum_{i=1}^4x_i$, with $0<4-\sum_{i=1}^4x_i<1$);  the solution obtained from  (\ref{EQ:RELNPOSEN+1ZERO}) is 
 \begin{equation*}
\left\{ 
\begin{array}{ll}
\lambda_{\no{1}2345}=
\frac{x_{5}}{4-(x_1+x_2+x_3+x_4)}(1-x_1), \;\; \lambda_{\no{1}234\no{5}}=
(1-\frac{x_{5}}{4-(x_1+x_2+x_3+x_4)})(1-x_1),
\\
\lambda_{1\no{2}345}=
\frac{x_{5}}{4-(x_1+x_2+x_3+x_4)}(1-x_2),
\;\; \lambda_{1\no{2}34\no{5}}=
(1-\frac{x_{5}}{4-(x_1+x_2+x_3+x_4)})(1-x_2),
\\
\lambda_{12\no{3}45}=
\frac{x_{5}}{4-(x_1+x_2+x_3+x_4)}(1-x_3),
\;\; \lambda_{12\no{3}4\no{5}}=
(1-\frac{x_{5}}{4-(x_1+x_2+x_3+x_4)})(1-x_3),
\\
\lambda_{123\no{4}5}=
\frac{x_{5}}{4-(x_1+x_2+x_3+x_4)}(1-x_4),
\;\; \lambda_{123\no{4}\no{5}}=
(1-\frac{x_{5}}{4-(x_1+x_2+x_3+x_4)})(1-x_4),
\\
\lambda_{1234\no{5}}=x_1+x_2+x_3+x_4-3,\;\;
\lambda_{1^*2^*3^*4^*5^*}=0, \mbox{ otherwise}.
\end{array}
\right.
\end{equation*}	
	\item[$(c)$] 
	$T_L(x_1,\ldots,x_4)>0$ and
	$T_L(x_1,\ldots,x_5)>0$
	(in which case
	$ 4-\sum_{i=1}^4x_i<x_{5}\leq 1$);  the solution obtained from 
	(\ref{EQ:SOLUTIONTLPOSn+1}) is
\begin{equation*}
\begin{array}{ll}
\left\{
\begin{array}{ll}
\lambda_{12345}=  
x_{1}+x_2+x_3+x_4+x_5-4, \\
\lambda_{\no{1}2345}=
1-x_1,  \;\; \lambda_{1\no{2}345}=
1-x_2,  \\
\lambda_{12\no{3}45}=
1-x_3,    \;\;
\lambda_{123\no{4}5}=
1-x_4,  \\
\lambda_{1234\no{5}}=
1-x_5,    \;\; \lambda_{1^*2^*3^*4^*5^*}=0, \mbox{ otherwise}.
\end{array}
\right.
\end{array}
\end{equation*}		
	\item[$(d)$] 
	$T_L(x_1)=\cdots =T_L(x_1,x_2,x_3,x_4,x_5)=0$
	(in which case
	$ x_1=0$);  the solution obtained from 
	(\ref{EQ:TL=0hn}) is
	\begin{equation*}
\begin{array}{lllllllllllll}
\lambda_{12345}                = 0,                                    						 & 	\lambda_{1234\no{5}}                = 0,                                    						 \\
\lambda_{\no{1}2345}           = x_2x_3x_4x_5,  & 	\lambda_{\no{1}234\no{5}}           =
x_2x_3x_4(1-x_5),  \\
\lambda_{1\no{2}345}           = 0,   & 	\lambda_{1\no{2}34\no{5}}           = 0,   \\
\lambda_{\no{1}\no{2}345}      = (1-x_2)x_3x_4x_5,                                 				    & 	\lambda_{\no{1}\no{2}34\no{5}}      = (1-x_2)x_3x_4(1-x_5),                                 				    \\
\lambda_{12\no{3}45}           =0,   & 	\lambda_{12\no{3}4\no{5}}           =0,   \\
\lambda_{\no{1}2\no{3}45}     = x_2(1-x_3)x_4x_5,                               					      & 	\lambda_{\no{1}2\no{3}4\no{5}}     = x_2(1-x_3)x_4(1-x_5),                  \\
\lambda_{1\no{2}\no{3}45}      = 0,                             					        & 	\lambda_{1\no{2}\no{3}4\no{5}}      = 0,                             					        \\
\lambda_{\no{1}\no{2}\no{3}45} = (1-x_2)(1-x_3)x_4x_5,             ,  													 & 	\lambda_{\no{1}\no{2}\no{3}4\no{5}}=(1-x_2)(1-x_3)x_4 (1-x_5),            
\\
\lambda_{123\no{4}5}                = 0,         						 & 	\lambda_{123\no{4}\no{5}}                =0,                  						 \\
\lambda_{\no{1}23\no{4}5}           = x_2x_3(1-x_4)x_5,             & 	\lambda_{\no{1}23\no{4}\no{5}}           = x_2x_3(1-x_4)(1-x_5),             & ,  \\
\lambda_{1\no{2}3\no{4}5}           = 0,   & 	\lambda_{1\no{2}3\no{4}\no{5}}           = 0,  \\
\lambda_{\no{1}\no{2}3\no{4}5}      = (1-x_2)x_3(1-x_4)x_5,                                 				    & 	\lambda_{\no{1}\no{2}3\no{4}\no{5}}      =  (1-x_2)x_3(1-x_4)(1-x_5),                                 				    \\
\lambda_{12\no{3}\no{4}5}           =
0,   & 	\lambda_{12\no{3}\no{4}\no{5}}           = 0,  \\
\lambda_{\no{1}2\no{3}\no{4}5}     =  x_2(1-x_3)(1-x_4)x_5,                               					      & 	\lambda_{\no{1}2\no{3}\no{4}\no{5}}     =  x_2(1-x_3)(1-x_4)(1-x_5),                               					      \\
\lambda_{1\no{2}\no{3}\no{4}5}      = 0,                             					        & 	\lambda_{1\no{2}\no{3}\no{4}\no{5}}      = 0,                             					        \\
\lambda_{\no{1}\no{2}\no{3}\no{4}5} =  (1-x_2)(1-x_3)(1-x_4)x_5,  													 & 	\lambda_{\no{1}\no{2}\no{3}\no{4}\no{5}} = 
(1-x_2)(1-x_3)(1-x_4)(1-x_5).												 
\end{array}
\end{equation*}	
		\item[$(e)$] 
	$0<T_L(x_1,\ldots,x_{h})<1$,
	$T_L(x_1,\ldots,x_{h+1})=\cdots =T_L(x_1,\ldots,x_{5})=0$, with $1\leq h<4$; 
	the solution is given in (\ref{EQ:TLh=0h+1=0}). If for instance $h=3$,  the components of the  vector $\Lambda_5$  are 
	\begin{equation*}\label{EQ:TL=0n=5}
	\begin{array}{lllllllllllll}
	\lambda_{12345}                = 0,                                    						 & 	\lambda_{1234\no{5}}                = 0,                                    						 \\
	\lambda_{\no{1}2345}           = (1-x_1)\frac{x_4}{3-x_1-x_2-x_3}x_5,  & 	\lambda_{\no{1}234\no{5}}           = (1-x_1)\frac{x_4}{3-x_1-x_2-x_3}(1-x_5),  \\
	\lambda_{1\no{2}345}           = (1-x_2)\frac{x_4}{3-x_1-x_2-x_3}x_5,   & 	\lambda_{1\no{2}34\no{5}}           = (1-x_2)\frac{x_4}{3-x_1-x_2-x_3}(1-x_5),   \\
	\lambda_{\no{1}\no{2}345}      = 0,                                 				    & 	\lambda_{\no{1}\no{2}34\no{5}}      = 0,                                 				    \\
	\lambda_{12\no{3}45}           =(1-x_3)\frac{x_4}{3-x_1-x_2-x_3}x_5,   & 	\lambda_{12\no{3}4\no{5}}           =(1-x_3)\frac{x_4}{3-x_1-x_2-x_3}(1-x_5),   \\
	\lambda_{\no{1}2\no{3}45}     = 0,                               					      & 	\lambda_{\no{1}2\no{3}4\no{5}}     = 0,                               					      \\
	\lambda_{1\no{2}\no{3}45}      = 0,                             					        & 	\lambda_{1\no{2}\no{3}4\no{5}}      = 0,                             					        \\
	\lambda_{\no{1}\no{2}\no{3}45} = 0,  													 & 	\lambda_{\no{1}\no{2}\no{3}4\no{5}} = 0,  													 
	\\
	\lambda_{123\no{4}5}                = (x_1+x_2+x_3-2)x_5,                                    						 & 	\lambda_{123\no{4}\no{5}}                =(x_1+x_2+x_3-2)(1-x_5),                                    						 \\
	\lambda_{\no{1}23\no{4}5}           = (1-x_1)(1-\frac{x_4}{3-x_1-x_2-x_3})x_5,  & 	\lambda_{\no{1}23\no{4}\no{5}}           = (1-x_1)(1-\frac{x_4}{3-x_1-x_2-x_3})(1-x_5),  \\
	\lambda_{1\no{2}3\no{4}5}           = (1-x_2)(1-\frac{x_4}{3-x_1-x_2-x_3})x_5,   & 	\lambda_{1\no{2}3\no{4}\no{5}}           = (1-x_2)(1-\frac{x_4}{3-x_1-x_2-x_3})(1-x_5),  \\
	\lambda_{\no{1}\no{2}3\no{4}5}      = 0,                                 				    & 	\lambda_{\no{1}\no{2}3\no{4}\no{5}}      = 0,                                 				    \\
	\lambda_{12\no{3}\no{4}5}           =
	(1-x_3)(1-\frac{x_4}{3-x_1-x_2-x_3})x_5,   & 	\lambda_{12\no{3}\no{4}\no{5}}           = (1-x_3)(1-\frac{x_4}{3-x_1-x_2-x_3})(1-x_5),  \\
	\lambda_{\no{1}2\no{3}\no{4}5}     = 0,                               					      & 	\lambda_{\no{1}2\no{3}\no{4}\no{5}}     = 0,                               					      \\
	\lambda_{1\no{2}\no{3}\no{4}5}      = 0,                             					        & 	\lambda_{1\no{2}\no{3}\no{4}\no{5}}      = 0,                             					        \\
	\lambda_{\no{1}\no{2}\no{3}\no{4}5} = 0,  													 & 	\lambda_{\no{1}\no{2}\no{3}\no{4}\no{5}} = 0.												 
	\end{array}
	\end{equation*}
	
	\item[$(f)$] 
$T_L(x_1,\ldots,x_{h})=1$, $T_L(x_1,\ldots,x_{h+1})=\cdots =T_L(x_1,\ldots,x_{5})=0$,  with $1\leq h<4$; 
the solution is given in (\ref{EQ:TLh=1h+1=0}). If for instance $h=2$,  the components of the  vector $\Lambda_5$  are 
\[
\left\{
\begin{array}{ll}
\lambda_{12\no{3}45}=x_4x_5, \;\;
\lambda_{12\no{3}4\no{5}}=x_4(1-x_5), \;\;
\lambda_{12\no{3}\no{4}5}=(1-x_4)x_5, \;\;\\
\lambda_{12\no{3}\no{4}\no{5}}=(1-x_4)(1-x_5), \;\;
\;\;\lambda_{1^*2^*3^*4^*5^*}=0, \mbox{otherwise}.
\end{array}
\right.
\]
\end{enumerate}	
\end{example}
	
\section{Probabilistic interpretation of  Frank t-norms and t-conorms}
\label{SEC:5}
In this section we show that the previsions of  the conjunction and the disjunction of $n$ conditional events can be represented as  a  Frank t-norm $T_{\lambda}$ and a Frank t-conorm $S_{\lambda}$, respectively. Then, we characterize the set of coherent assessments by Frank t-norms and  t-conorms. Moreover, when $n=2$, we show that, under logical independence, $T_{\lambda}(A|H,B|K)=(A|H)\wedge (B|K)$ and $S_{\lambda}(A|H,B|K)=(A|H)\vee (B|K)$ for every $\lambda\in[0,+\infty]$. We also examine  cases where there are  logical dependencies.

\subsection{Set of coherent  assessments,  Frank t-norms and t-conorms}
\label{SEC:5.1}
 We recall that the $n$-ary Frank t-norm, with parameter $\lambda\in[0,+\infty]$, is
\begin{equation}\label{EQ:FRANKn}
T_{\lambda}(x_1,\ldots,x_n)=\left\{\begin{array}{ll}
T_{M}(x_1,\ldots,x_n)=\min\{x_1,\ldots,x_n\}, & \text{ if } \lambda=0,\\
T_{P}(x_1,\ldots,x_n)=\prod_{i=1}^nx_i, & \text{ if } \lambda=1,\\	  
T_L(x_1,\ldots,x_n) = \max\{\sum_{i=1}^nx_i-n+1, 0\}, & \text{ if } \lambda=+\infty,\\	  	
\log_{\lambda}(1+\frac{\prod_{i=1}^n(\lambda^{x_i}-1)}{(\lambda-1)^{n-1}})
, & \text{ otherwise}.
\end{array}\right.
\end{equation}	

The next result shows that, under logical independence,  
given any coherent assessment $(x_1,\ldots, x_n,x_{1\cdots n})$ on $\{E_1|H_1,\ldots, E_n|H_n, \C_{1\cdots n}\}$, 
it holds that $x_{1\cdots n}=T_{\lambda}(x_1,\cdots,x_n)$ for some $\lambda \in[0,+\infty]$;  conversely, for  every $\lambda \in[0,+\infty]$ the extension  $x_{1\cdots n}=T_{\lambda}(x_1,\cdots,x_n)$ 
 is coherent.  
	\begin{theorem}\label{THM:TNORMn}
		Let $E_1,\ldots,E_n,H_1,\ldots,H_n$ be logically independents events, with $H_1\neq \emptyset$, \ldots, $H_n\neq \emptyset$, $n\geq 2$. The set $\Pi$ of all prevision coherent assessments $\M=(x_1,\ldots, x_n,x_{1\cdots n})$ on the family $\F=\{E_1|H_1,\ldots, E_n|H_n, \C_{1\cdots n}\}$ coincides with the set
		\begin{equation}\label{EQ:SETPIGRECOnlambda}
		\begin{array}{ll}
		\Pi_T= \{(x_1,\ldots,x_n,x_{1\cdots n}):(x_1,\ldots,x_n)\in[0,1]^n,x_{1\cdots n}= T_{\lambda}(x_1,\cdots,x_n),\lambda\in[0,+\infty]\}.
		\end{array}	
		\end{equation}	
	\end{theorem}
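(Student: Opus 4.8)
The plan is to read off the identity $\Pi=\Pi_T$ directly from the description of $\Pi$ already obtained in Theorem~\ref{THM:FRECHETn}, combined with the two structural properties of Frank t-norms recalled after~(\ref{EQ:FRANK}): their monotonicity and their continuity in the parameter $\lambda$. By Theorem~\ref{THM:FRECHETn},
\[
\Pi=\{(x_1,\ldots,x_n,x_{1\cdots n}):(x_1,\ldots,x_n)\in[0,1]^n,\ x_{1\cdots n}\in[T_L(x_1,\ldots,x_n),T_M(x_1,\ldots,x_n)]\},
\]
so it suffices to prove that, for every fixed $(x_1,\ldots,x_n)\in[0,1]^n$, the set $\{T_\lambda(x_1,\ldots,x_n):\lambda\in[0,+\infty]\}$ of admissible last coordinates coincides with the Fr\'echet--Hoeffding interval $[T_L(x_1,\ldots,x_n),T_M(x_1,\ldots,x_n)]$.

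First I would settle the inclusion $\Pi_T\subseteq\Pi$. If $\M=(x_1,\ldots,x_n,x_{1\cdots n})\in\Pi_T$, then $x_{1\cdots n}=T_\lambda(x_1,\ldots,x_n)$ for some $\lambda\in[0,+\infty]$; since $T_\lambda$ is decreasing in $\lambda$ with $T_0=T_M$ and $T_{+\infty}=T_L$, the value satisfies $T_L(x_1,\ldots,x_n)\le x_{1\cdots n}\le T_M(x_1,\ldots,x_n)$, hence $\M\in\Pi$.

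The reverse inclusion $\Pi\subseteq\Pi_T$ is the core of the argument. For a fixed $(x_1,\ldots,x_n)\in[0,1]^n$ I would consider the map $f(\lambda)=T_\lambda(x_1,\ldots,x_n)$ on the extended half-line $[0,+\infty]$, which is continuous in $\lambda$ (a property inherited from the binary case recalled after~(\ref{EQ:FRANK})), with $f(0)=T_M(x_1,\ldots,x_n)$ and $f(+\infty)=T_L(x_1,\ldots,x_n)$. Given any $\M=(x_1,\ldots,x_n,x_{1\cdots n})\in\Pi$, we have $x_{1\cdots n}\in[f(+\infty),f(0)]$, so the intermediate value theorem---applied after identifying $[0,+\infty]$ with a compact interval, e.g.\ through $\lambda\mapsto\lambda/(1+\lambda)$---yields some $\lambda^*\in[0,+\infty]$ with $T_{\lambda^*}(x_1,\ldots,x_n)=x_{1\cdots n}$; thus $\M\in\Pi_T$. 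Combining the two inclusions gives $\Pi=\Pi_T$, which is the claim.

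The only delicate point I anticipate is the legitimacy of the intermediate value step across the full range $\lambda\in[0,+\infty]$: it requires genuine continuity of $f$ up to and including the boundary parameters $\lambda=0$, $\lambda=1$, and $\lambda=+\infty$, at which the Frank t-norm is given by the separate closed forms in~(\ref{EQ:FRANKn}). This is precisely the continuity property quoted from \cite{KlMP00,KlMP05}, so no additional computation is needed and the argument is complete.
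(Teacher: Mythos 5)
Your proposal is correct and follows essentially the same route as the paper: both reduce the claim to Theorem~\ref{THM:FRECHETn} and then establish the two inclusions, using $T_L\leq T_\lambda\leq T_M$ (monotonicity in $\lambda$) for $\Pi_T\subseteq\Pi$ and the continuity of $\lambda\mapsto T_\lambda(x_1,\ldots,x_n)$ for $\Pi\subseteq\Pi_T$. The only difference is presentational: you make the intermediate-value step on the compactified parameter range $[0,+\infty]$ explicit, whereas the paper invokes continuity in $\lambda$ without spelling this out.
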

	\begin{proof}
		We show that $\Pi\subseteq \Pi_T$ and $\Pi_T\subseteq \Pi$.
		For each given  $\M=(x_1,\ldots,x_n,x_{1\cdots n})\in\Pi$, by Theorem \ref{THM:FRECHETn}, 
		it holds that  $(x_1,\ldots,x_n)\in[0,1]^n$  and $x_{1\cdots n}\in[T_L(x_1,\ldots,x_n),T_M(x_1,\ldots,x_n)]=[T_{+\infty }(x_1,\ldots,x_n),T_{0}(x_1,\ldots,x_n)]$. 
		Then,  by the continuity property of $T_{\lambda}$ with respect to $\lambda$, there exists 
		 $\lambda\in [0,+\infty]$
		such that $x_{1\cdots n}=T_{\lambda}(x_1,\ldots,x_n)$. Thus, $\Pi\subseteq \Pi_T$. 
		
		Conversely, for every $\lambda\in[0,+\infty]$ and for every $(x_1,\ldots,x_n)\in[0,1]^n$, by Theorem \ref{THM:FRECHETn} the assessment $\M=(x_1,\ldots,x_n,T_{\lambda}(x_1,\ldots,x_n))$  is coherent because $T_{\lambda}(x_1,\ldots,x_n)\in [T_L(x_1,\ldots,x_n),T_M(x_1,\ldots,x_n)]$. Thus 
		$\Pi_T\subseteq \Pi$ and hence $\Pi=\Pi_T$.
	\end{proof}

\begin{remark}\label{REM:TNORMn}
We observe that in case of some logical dependencies, for each given coherent assessment  $(x_{1},\ldots,x_{n})$, the set
 of coherent extensions $x_{1\cdots n}$ is an interval 
\[ 
 [\mu'(x_{1},\ldots,x_{n}),
 \mu''(x_1\cdots x_n)]\subseteq [T_{+\infty}(x_1,\ldots,x_n),T_0(x_1,\ldots,x_n)].
\] 
By  Theorem \ref{THM:TNORMn},  there exist $\lambda'$ and $\lambda''$ such that
$\mu'(x_{1},\ldots,x_{n})=T_{\lambda'}(x_1,\ldots,x_n)$ and $\mu''(x_{1},\ldots,x_{n})=T_{\lambda''}(x_1,\ldots,x_n)$, with   $+\infty\geq  \lambda'\geq \lambda''\geq 0$ because $T_{\lambda}$ is  decreasing  with respect to the parameter $\lambda$. Then,
\[
[\mu'(x_{1},\ldots,x_{n}),
\mu''(x_1\cdots x_n)]
=[T_{\lambda'}(x_1,\ldots,x_n),T_{\lambda''}(x_1,\ldots,x_n)].
\]
Moreover, for each $x_{1\cdots n}\in[\mu'(x_{1},\ldots,x_{n}),
\mu''(x_1\cdots x_n)]$, there exists $\lambda\in[\lambda'',\lambda']$ such that 
$x_{1\cdots n}=T_{\lambda}(x_1,\ldots,x_n).$ 
However, the set of all coherent assessments $(x_{1},\ldots,x_{n},x_{1\cdots n})$ is in general a subset of the set $\Pi_T$ given in (\ref{EQ:SETPIGRECOnlambda}). 
Then, it may happen that, given  any coherent assessment $(x_1,\ldots,x_n)$, the extension 
$x_{1\cdots n}=T_{\lambda}(x_1,\ldots,x_n)$ is not coherent, for some $\lambda \in[0,+\infty]$. 
\end{remark}	
We now show that a result  dual of Theorem  (\ref{THM:TNORMn}) holds for the  disjunction of conditional events, where the Frank t-norm is replaced by the dual Frank t-conorm.
	The notion of disjunction given in Definition \ref{DISJUNCTION}  can be extended to the case of $n$ conditional events $E_1|H_1,\ldots,E_n|H_n$ (\cite{GiSa19}).  Moreover, the conjunction $\C_{1\cdots n}$ and the disjunction $\D_{1\cdots n}$ satisfy De Morgan's Laws; in particular
\begin{equation}\label{EQ:DM}
\D_{1\cdots n}=\bigvee_{i=1}^n(E_i|H_i)=1-\bigwedge_{i=1}^n (\no{E_i}|H_i)=1-\C_{\no{1}\cdots \no{n}},
\end{equation}
where $\C_{\no{1}\cdots \no{n}}=\bigwedge_{i=1}^n (\no{E_i}|H_i)$. We set $\prev(\D_{1\cdots n})=y_{1\cdots n}$ and $\prev(\C_{\no{1}\cdots \no{n}})=x_{\no{1}\cdots \no{n}}$. Of course, $y_{1\cdots n}=1-x_{\no{1}\cdots \no{n}}$. By Theorem \ref{THM:FRECHETn}, $x_{\no{1}\cdots \no{n}}$ is a coherent  extension of the assessment $(x_1,\ldots,x_n)$ on $\{E_1|H_1,\ldots,E_n|H_n\}$ if and only if 
\[
T_L(1-x_1,\ldots,1-x_n)\leq x_{\no{1}\cdots \no{n}}\leq T_M(1-x_1,\ldots,1-x_n).
\]
that is 
\[
1-T_M(1-x_1,\ldots,1-x_n)\leq y_{1\cdots n}\leq 1-T_L(1-x_1,\ldots,1-x_n).
\]
Moreover, denoting by $S_L$ and $S_M$ the Lukasiewicz and Minimum t-conorms, respectively, it holds that 
\[
S_L(x_1,\ldots,x_n)=\min\{\sum_{i=1}^nx_i,1\}=1-T_L(1-x_1,\ldots,1-x_n),\] 
and 
\[
S_M(x_1,\ldots,x_n)=\max\{x_1,\ldots,x_n\}=1-T_M(1-x_1,\ldots,1-x_n).
\]
Then, from (\ref{EQ:DM}),  we obtain 
the result  below  which establishes  the Fr\'echet-Hoeffding bounds for the disjunction  of n conditional events.
\begin{theorem}\label{THM:FRECHETnDISJUNCTION}
	Let $E_1,\ldots,E_n,H_1,\ldots,H_n$ be logically independents events, with $H_1\neq \emptyset$, \ldots, $H_n\neq \emptyset$, $n\geq 2$. The set   of all prevision coherent assessments $(x_1,\ldots, x_n,y_{1\cdots n})$ on the family $\{E_1|H_1,\ldots, E_n|H_n, \D_{1\cdots n}\}$ is the set
	\begin{equation}\label{EQ:SETPIGRECOnDISJUNCTION}
	\begin{array}{ll}
	\Gamma=\{(x_1,\ldots,x_n,y_{1\cdots n}):(x_1,\ldots,x_n)\in[0,1]^n, y_{1\cdots n}\in [S_M(x_1,\ldots,x_{n}),S_L(x_1,\ldots,x_{n})]\}.
	\end{array}	
	\end{equation}
\end{theorem}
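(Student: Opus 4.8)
The plan is to deduce the disjunction result from the conjunction result (Theorem \ref{THM:FRECHETn}) by passing to the negated conditional events via De Morgan's law \eqref{EQ:DM}. First I would observe that logical independence is stable under negation: since $E_1,\ldots,E_n,H_1,\ldots,H_n$ are logically independent, the events $\no{E}_1,\ldots,\no{E}_n,H_1,\ldots,H_n$ are logically independent as well, because replacing each $E_i$ by $\no{E}_i$ merely relabels the $3^n$ constituents without changing their number. Hence Theorem \ref{THM:FRECHETn} applies verbatim to the family $\{\no{E}_1|H_1,\ldots,\no{E}_n|H_n,\C_{\no{1}\cdots\no{n}}\}$, where $\C_{\no{1}\cdots\no{n}}=\bigwedge_{i=1}^n(\no{E}_i|H_i)$ and the relevant conditional probabilities are $P(\no{E}_i|H_i)=1-x_i$.

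The key reduction is the correspondence between the two families. By \eqref{EQ:DM} one has the pointwise identity $\D_{1\cdots n}=1-\C_{\no{1}\cdots\no{n}}$, and coherent prevision is affine, so the transformation $Z\mapsto 1-Z$ sends a coherent prevision $\mu$ to the coherent prevision $1-\mu$. Consequently, $(x_1,\ldots,x_n,y_{1\cdots n})$ is a coherent assessment on $\{E_1|H_1,\ldots,E_n|H_n,\D_{1\cdots n}\}$ if and only if $(1-x_1,\ldots,1-x_n,\,1-y_{1\cdots n})$ is a coherent assessment on $\{\no{E}_1|H_1,\ldots,\no{E}_n|H_n,\C_{\no{1}\cdots\no{n}}\}$, where $x_{\no{1}\cdots\no{n}}=\prev(\C_{\no{1}\cdots\no{n}})=1-y_{1\cdots n}$.

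Applying Theorem \ref{THM:FRECHETn} to the negated family then yields that $x_{\no{1}\cdots\no{n}}$ is a coherent extension of $(1-x_1,\ldots,1-x_n)$ precisely when
\[
T_L(1-x_1,\ldots,1-x_n)\;\le\;x_{\no{1}\cdots\no{n}}\;\le\;T_M(1-x_1,\ldots,1-x_n).
\]
Substituting $x_{\no{1}\cdots\no{n}}=1-y_{1\cdots n}$ and invoking the two duality identities recalled before the statement, namely $S_L(x_1,\ldots,x_n)=1-T_L(1-x_1,\ldots,1-x_n)$ and $S_M(x_1,\ldots,x_n)=1-T_M(1-x_1,\ldots,1-x_n)$, this rearranges to $S_M(x_1,\ldots,x_n)\le y_{1\cdots n}\le S_L(x_1,\ldots,x_n)$, which is exactly the description of $\Gamma$ in \eqref{EQ:SETPIGRECOnDISJUNCTION}. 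Since $(x_1,\ldots,x_n)\in[0,1]^n$ is equivalent to $(1-x_1,\ldots,1-x_n)\in[0,1]^n$, the coordinate constraint defining $\Gamma$ is preserved under the correspondence as well.

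I expect the only genuinely load-bearing point to be the transfer of coherence across the De Morgan/affine correspondence; once that bijection is granted, the logical independence of the negated family and the sharpness of the bounds are inherited directly from Theorem \ref{THM:FRECHETn}, and the rest is the bookkeeping of the two t-conorm duality formulas.
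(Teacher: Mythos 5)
Your proposal is correct and follows essentially the same route as the paper: the paper also derives Theorem \ref{THM:FRECHETnDISJUNCTION} from Theorem \ref{THM:FRECHETn} by applying De Morgan's law (\ref{EQ:DM}), setting $y_{1\cdots n}=1-x_{\no{1}\cdots\no{n}}$, and rewriting the resulting bounds via the duality identities $S_L(x_1,\ldots,x_n)=1-T_L(1-x_1,\ldots,1-x_n)$ and $S_M(x_1,\ldots,x_n)=1-T_M(1-x_1,\ldots,1-x_n)$. Your explicit remarks that logical independence is preserved under negation and that coherence transfers across the affine map $Z\mapsto 1-Z$ are points the paper leaves implicit, but they do not change the argument.
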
	
Based on  Theorems   \ref{THM:TNORMn} and \ref{THM:FRECHETnDISJUNCTION}, we have
	\begin{theorem}\label{THM:TCONORMn}
	Let $E_1,\ldots,E_n,H_1,\ldots,H_n$ be logically independents events, with $H_1\neq \emptyset$, \ldots, $H_n\neq \emptyset$, $n\geq 2$. The set $\Gamma$ of all prevision coherent assessments $\M=(x_1,\ldots, x_n,y_{1\cdots n})$ on the family $\F=\{E_1|H_1,\ldots, E_n|H_n, \D_{1\cdots n}\}$ coincides with the set
	\begin{equation}\label{EQ:TCONORMn}
	\begin{array}{ll}
	\Gamma_S= \{(x_1,\ldots,x_n,y_{1\cdots n}):(x_1,\ldots,x_n)\in[0,1]^n,y_{1\cdots n}= S_{\lambda}(x_1,\cdots,x_n),\lambda\in[0,+\infty]\}.
	\end{array}	
	\end{equation}	
\end{theorem}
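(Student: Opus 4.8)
The plan is to obtain Theorem~\ref{THM:TCONORMn} as the exact dual of Theorem~\ref{THM:TNORMn}, reading off everything from Theorem~\ref{THM:FRECHETnDISJUNCTION} together with the analytic behaviour of the $n$-ary Frank t-conorm. First I would record the properties of $S_{\lambda}$ that are needed, all of which transfer from $T_{\lambda}$ through the duality relation $S_{\lambda}(x_1,\ldots,x_n)=1-T_{\lambda}(1-x_1,\ldots,1-x_n)$. Since $T_{\lambda}$ is continuous and decreasing in $\lambda$ with $T_0=T_M$ and $T_{+\infty}=T_L$, the conorm $S_{\lambda}$ is continuous and increasing in $\lambda$ with boundary values $S_0=S_M$ and $S_{+\infty}=S_L$; moreover the inequalities $T_L\leq T_{\lambda}\leq T_M$ dualize to $S_M(x_1,\ldots,x_n)\leq S_{\lambda}(x_1,\ldots,x_n)\leq S_L(x_1,\ldots,x_n)$ for every $(x_1,\ldots,x_n)\in[0,1]^n$ and every $\lambda\in[0,+\infty]$.

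With these facts in hand the proof splits into the two inclusions $\Gamma\subseteq\Gamma_S$ and $\Gamma_S\subseteq\Gamma$. For $\Gamma\subseteq\Gamma_S$ I would take any $(x_1,\ldots,x_n,y_{1\cdots n})\in\Gamma$; by Theorem~\ref{THM:FRECHETnDISJUNCTION} we have $(x_1,\ldots,x_n)\in[0,1]^n$ and $y_{1\cdots n}\in[S_M(x_1,\ldots,x_n),S_L(x_1,\ldots,x_n)]=[S_0(x_1,\ldots,x_n),S_{+\infty}(x_1,\ldots,x_n)]$. By continuity of the map $\lambda\mapsto S_{\lambda}(x_1,\ldots,x_n)$ on $[0,+\infty]$, the intermediate value theorem yields some $\lambda\in[0,+\infty]$ with $y_{1\cdots n}=S_{\lambda}(x_1,\ldots,x_n)$, so the assessment lies in $\Gamma_S$.

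For the reverse inclusion $\Gamma_S\subseteq\Gamma$ I would fix any $\lambda\in[0,+\infty]$ and any $(x_1,\ldots,x_n)\in[0,1]^n$ and set $y_{1\cdots n}=S_{\lambda}(x_1,\ldots,x_n)$. The bound $S_M\leq S_{\lambda}\leq S_L$ shows that $y_{1\cdots n}\in[S_M(x_1,\ldots,x_n),S_L(x_1,\ldots,x_n)]$, whence Theorem~\ref{THM:FRECHETnDISJUNCTION} gives coherence, i.e.\ the point belongs to $\Gamma$. Combining the two inclusions yields $\Gamma=\Gamma_S$.

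I do not expect any genuine obstacle here, since the heavy lifting has already been carried out in the sharpness result for the conjunction (Theorem~\ref{THM:TNORMn}) and in its disjunction counterpart (Theorem~\ref{THM:FRECHETnDISJUNCTION}); the only point requiring a little care is the bookkeeping that the monotonicity and continuity of $T_{\lambda}$ in $\lambda$ really do carry over to $S_{\lambda}$ under the duality relation, and that the endpoint identifications $S_0=S_M$ and $S_{+\infty}=S_L$ match exactly the endpoints of the coherence interval appearing in Theorem~\ref{THM:FRECHETnDISJUNCTION}. Once that is settled, the characterization is immediate.
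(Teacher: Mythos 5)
Your proposal is correct and follows essentially the same route as the paper: the paper states Theorem~\ref{THM:TCONORMn} as an immediate consequence of Theorems~\ref{THM:TNORMn} and~\ref{THM:FRECHETnDISJUNCTION}, with the intended argument being exactly yours, namely that the coherence interval $[S_M(x_1,\ldots,x_n),S_L(x_1,\ldots,x_n)]$ from Theorem~\ref{THM:FRECHETnDISJUNCTION} is swept out by $\lambda\mapsto S_{\lambda}(x_1,\ldots,x_n)$ thanks to the continuity and monotonicity in $\lambda$ inherited from $T_{\lambda}$ via duality. Your write-up merely makes explicit the bookkeeping (endpoint identifications $S_0=S_M$, $S_{+\infty}=S_L$, and the two inclusions) that the paper leaves implicit.
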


\subsection{Representation of   conjunction and disjunction of two conditional events}
\label{SEC:5.2}
In this section we examine the representation of   conjunction and disjunction of two conditional events in terms of  Frank t-norms and  Frank t-conorms, respectively.
\begin{theorem}\label{THM:TLAMBDA}
For each  coherent prevision assessment $(x,y,z)$ on $\{A|H,B|K,(A|H)\wedge (B|K)\}$, 
it holds that 
\begin{equation}
(A|H)\wedge (B|K)=T_{\lambda}(A|H, B|K), \mbox{ for some } \lambda \in[0,+\infty].
\end{equation}
\end{theorem}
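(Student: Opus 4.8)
The plan is to compare, constituent by constituent, the conditional random quantity $(A|H)\wedge(B|K)$ of Definition~\ref{CONJUNCTION} with the random quantity $T_{\lambda}(A|H,B|K)$ obtained by applying the Frank t-norm pointwise, and to observe that the two can differ only on the single constituent $\no{H}\,\no{K}$. First I would recall that, as random quantities, $A|H\in\{1,0,x\}$ and $B|K\in\{1,0,y\}$ according to whether $AH$, $\no{A}H$, $\no{H}$ (resp.\ $BK$, $\no{B}K$, $\no{K}$) is true. Using the t-norm axioms---the boundary condition $T_{\lambda}(1,t)=T_{\lambda}(t,1)=t$, together with the identity $T_{\lambda}(0,t)=T_{\lambda}(t,0)=0$ that follows from monotonicity and the boundary condition---I would evaluate $T_{\lambda}(A|H,B|K)$ on each constituent and match it against the values of $(A|H)\wedge(B|K)$ listed in (\ref{EQ:CONJUNCTION}). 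On every constituent contained in $H\vee K$ the two values agree (both are $1$, $0$, $x$, or $y$, as appropriate), and this agreement is independent of $\lambda$. Only on $\no{H}\,\no{K}$ do the two readings differ, being $z$ for the conjunction and $T_{\lambda}(x,y)$ for the t-norm.

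The key step is then to exhibit $\lambda\in[0,+\infty]$ with $T_{\lambda}(x,y)=z$. Since $(x,y,z)$ is coherent, the necessary Fr\'echet-Hoeffding condition (Theorem~\ref{THM:TEOREMAAI13} specialized to $n=2$, see also Remark~\ref{REM:TETRAHEDRON}) yields $T_L(x,y)\le z\le T_M(x,y)$; crucially, this holds whether or not $A,H,B,K$ are logically independent, since the bound is merely necessary for coherence. Because $T_{\lambda}(x,y)$ is continuous in $\lambda$ and decreasing, with $T_{+\infty}(x,y)=T_L(x,y)$ and $T_{0}(x,y)=T_M(x,y)$, the intermediate value theorem supplies some $\lambda\in[0,+\infty]$ for which $T_{\lambda}(x,y)=z$.

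With this $\lambda$ fixed, the two conditional random quantities---both conditioned on $H\vee K$---take equal values on all constituents, and hence $(A|H)\wedge(B|K)=T_{\lambda}(A|H,B|K)$, as claimed. I expect the only delicate point to be the degenerate case $\no{H}\,\no{K}=\emptyset$: there the constituent carrying the discrepancy is impossible, so the equality of random quantities holds for \emph{every} $\lambda$, while the common prevision is forced to equal $z$; the value selected by the intermediate value theorem remains consistent, so no separate argument is needed. The remainder is the routine constituent bookkeeping sketched in the first paragraph.
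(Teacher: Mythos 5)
Your proof is correct and follows essentially the same route as the paper's: a constituent-by-constituent evaluation of $T_{\lambda}(A|H,B|K)$ via the t-norm boundary conditions, showing agreement with $(A|H)\wedge(B|K)$ on every constituent except $\no{H}\,\no{K}$, and then existence of $\lambda\in[0,+\infty]$ with $T_{\lambda}(x,y)=z$ obtained from the necessary Fr\'echet--Hoeffding bounds together with continuity and monotonicity of $T_{\lambda}$ in $\lambda$ --- which is exactly the content of Remark \ref{REM:TNORMn} that the paper invokes, and which you have simply unfolded (your added note on the degenerate case $\no{H}\,\no{K}=\emptyset$ is a harmless refinement the paper leaves implicit).
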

\begin{proof}
From Definition \ref{DEF:TNORM} it holds that $T_{\lambda}(1,1)=1$,  $T_{\lambda}(x,0)=T_{\lambda}(0,y)=0$, $T_{\lambda}(x,1)=x$,   $T_{\lambda}(1,y)=y$. Then,
\begin{equation}\label{EQ:FRANKBIS}
T_{\lambda}(A|H,B|K) =\left\{\begin{array}{ll}
1, &\mbox{ if $AHBK$ is true,}\\
0, &\mbox{ if  $\widebar{A}H$ is true or  $\widebar{B}K$ is true,}\\
x, &\mbox{ if $\widebar{H}BK$ is true,}\\
y, &\mbox{ if $\widebar{K}AH$ is true,}\\
T_{\lambda}(x,y), &\mbox{ if $\widebar{H}\,\widebar{K}$ is true}.
\end{array}
\right.
\end{equation}
By Remark \ref{REM:TNORMn},
there exists $\lambda\in[0,+\infty]$ such that 
$z=T_{\lambda}(x,y)$. Then, 
from (\ref{EQ:CONJUNCTION}) and (\ref{EQ:FRANKBIS}), for each coherent assessment $(x,y,z)$ on $\{A|H,B|K,(A|H)\wedge (B|K)\}$ there exists $\lambda\in[0,+\infty]$ such that 
$(A|H)\wedge(B|K)=T_{\lambda}(A|H, B|K)$.
\end{proof}
\begin{remark}\label{REM:CONG2ANDTNORM}	
We observe  that to define the conjunction $(A|H)\wedge (B|K)$ amounts to specify a coherent assessment $(x,y,z)$ on $\{A|H,B|K,(A|H)\wedge (B|K)\}$.
Moreover, we recall that, 
 by Theorem \ref{THM:TNORMn} (see also formula (\ref{EQ:PI2})), in the particular case of  logical independence of $A,B,H,K$, 
 for each $\lambda\in[0,+\infty]$  the 
 extension $z=T_{\lambda}(x,y)$
on $(A|H)\wedge (B|K)$  of the assessment  $(x,y)$ on  $\{A|H,B|K\}$ is coherent, for every $(x,y)\in[0,1]^2$.
Then, for any given assessment $(x,y, T_{\lambda}(x,y))$ on $\{A|H,B|K, (A|H)\wedge (B|K)\}$, with $(x,y)\in[0,1]^2$, $\lambda \in[0,+\infty]$  it holds that
\[
(A|H)\wedge (B|K)=T_{\lambda}(A|H, B|K).
\]
 In other words,  for every $\lambda\in[0,+\infty]$, it is possible to  define the
 conjunction   as  $(A|H)\wedge (B|K)=T_{\lambda}(A|H, B|K)$,  for every $(x,y)\in[0,1]^2$. Of course, in case of some logical dependencies, given a coherent assessment $(x,y)$,  it may happen  that 
 $T_{\lambda}(A|H, B|K)$ is not a conjunction for some $\lambda \in[0,+\infty]$ because, by Remark \ref{REM:TNORMn}, the extension $z=T_{\lambda}(x,y)$ is not coherent. In Section \ref{SEC:5} we will give an example where
 $T_{\lambda}(A|H, A|K)$, with $\lambda>1$, 
does not represent the conjunction $(A|H)\wedge (A|K)$ for some coherent $(x,y)$.
\end{remark}
We  recall that the  dual Frank t-conorm $S_\lambda(x,y)=1-T_\lambda(1-x,1-y)$ is defined as
\begin{equation}\label{EQ:FRANKTCONORM}
S_{\lambda}(x,y)=\left\{\begin{array}{ll}
S_{M}(x,y)=\max\{x,y\}, & \text{ if } \lambda=0,\\
S_{P}(x,y)=x+y-xy, & \text{ if } \lambda=1,\\	  
S_{L}(x,y)=\min\{x+y,1\}, & \text{ if } \lambda=+\infty,\\	  	
1-\log_{\lambda}(1+\frac{(\lambda^{1-x}-1)(\lambda^{1-y}-1)}{\lambda-1}), & \text{ otherwise}.
\end{array}\right.
\end{equation}
Moreover,  for every $\lambda\in[0,+\infty]$, the pair $(T_{\lambda},S_{\lambda})$ satisfies the functional equation (\cite[Theorem 5.14]{KlMP00})
\begin{equation}\label{EQ:FRANKTNORMTCONORM}
S_\lambda(x,y)=x+y-T_\lambda(x,y),\;\; (x,y)\in[0,1]^2.
\end{equation}
	\begin{theorem}
		For each  coherent prevision assessment $(x,y,w)$ on $\{A|H,B|K,(A|H)\vee  (B|K)\}$, 
		it holds that 
		\begin{equation}
		(A|H)\vee  (B|K)=S_{\lambda}(A|H, B|K), \mbox{ for some } \lambda \in[0,+\infty].
		\end{equation}
	\end{theorem}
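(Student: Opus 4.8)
The plan is to mirror the proof of Theorem \ref{THM:TLAMBDA}, replacing the t-norm $T_\lambda$ by its dual t-conorm $S_\lambda$ and the conjunction structure of (\ref{EQ:CONJUNCTION}) by the disjunction structure of (\ref{EQ:DISJUNCTION}). First I would record the boundary behaviour of $S_\lambda$ on the unit square. Either directly from the t-conorm axioms (dual to Definition \ref{DEF:TNORM}) or from the relation $S_\lambda(u,v)=1-T_\lambda(1-u,1-v)$ together with the values of $T_\lambda$ already used in (\ref{EQ:FRANKBIS}), one gets $S_\lambda(0,0)=0$, $S_\lambda(1,v)=S_\lambda(u,1)=1$, $S_\lambda(u,0)=u$ and $S_\lambda(0,v)=v$ for every $u,v\in[0,1]$ and every $\lambda\in[0,+\infty]$.

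Next I would evaluate $S_\lambda(A|H,B|K)$ on each constituent, using the indicator values $A|H\in\{1,0,x\}$ and $B|K\in\{1,0,y\}$. The boundary values above show that $S_\lambda(A|H,B|K)$ equals $1$ whenever $AH\vee BK$ is true (this covers the constituents with $A|H=1$ or $B|K=1$, in particular the mixed cases $\no{H}BK$ and $AH\no{K}$, where $S_\lambda(x,1)=1$ and $S_\lambda(1,y)=1$), equals $0$ on $\no{A}H\no{B}K$, equals $x$ on $\no{H}\no{B}K$, equals $y$ on $\no{A}H\no{K}$, and equals $S_\lambda(x,y)$ on $\no{H}\no{K}$. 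This yields a piecewise representation of $S_\lambda(A|H,B|K)$ with exactly the same branch structure as the disjunction in (\ref{EQ:DISJUNCTION}).

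It then remains to choose $\lambda$ so that the two constant values on $\no{H}\no{K}$ agree, that is $w=S_\lambda(x,y)$. By the De Morgan law (\ref{EQ:DM}) one has $w=1-x_{\no{1}\no{2}}$, where $x_{\no{1}\no{2}}=\prev[(\no{A}|H)\wedge(\no{B}|K)]$ is a coherent extension of the assessment $(1-x,1-y)$ on $\{\no{A}|H,\no{B}|K\}$; applying the Fr\'echet-Hoeffding bounds of Theorem \ref{THM:TEOREMAAI13} for $n=2$ gives $T_L(1-x,1-y)\le x_{\no{1}\no{2}}\le T_M(1-x,1-y)$, hence $S_M(x,y)\le w\le S_L(x,y)$. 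Since $S_\lambda(x,y)=1-T_\lambda(1-x,1-y)$ is continuous in $\lambda$ and increasing in $\lambda$ (because $T_\lambda$ is continuous and decreasing), with $S_0=S_M$ and $S_{+\infty}=S_L$, the intermediate value theorem supplies a $\lambda\in[0,+\infty]$ with $w=S_\lambda(x,y)$; this is exactly the dual of Remark \ref{REM:TNORMn}. Comparing the two piecewise representations then gives $(A|H)\vee(B|K)=S_\lambda(A|H,B|K)$.

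The only delicate point I anticipate is the second step: one must verify that $S_\lambda$ collapses to the constant $1$ on \emph{all} five constituents implying $AH\vee BK$, including the two ``mixed'' ones in which one conditional event is void and the other is true, so that the grouped value of $S_\lambda(A|H,B|K)$ matches the single branch ``$1$ if $AH\vee BK$ is true'' of (\ref{EQ:DISJUNCTION}). This is a direct consequence of $1$ being the absorbing element of any t-conorm, and everything else is the formal dual of Theorem \ref{THM:TLAMBDA}.
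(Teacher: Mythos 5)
Your proof is correct, and its first two steps (the boundary values of $S_{\lambda}$ and the constituent-by-constituent evaluation of $S_{\lambda}(A|H,B|K)$) coincide with the paper's derivation of (\ref{EQ:FRANKBISTCONORM}); but the key step, producing a $\lambda$ with $w=S_{\lambda}(x,y)$, follows a genuinely different route. The paper invokes the prevision sum rule $w=x+y-z$, with $z=\prev[(A|H)\wedge(B|K)]$, applies Theorem \ref{THM:TLAMBDA} to get $z=T_{\lambda}(x,y)$, and transfers to the t-conorm side via Frank's functional equation $S_{\lambda}(x,y)=x+y-T_{\lambda}(x,y)$ in (\ref{EQ:FRANKTNORMTCONORM}). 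You instead pass through De Morgan, $w=1-\prev[(\no{A}|H)\wedge(\no{B}|K)]$, bound that conjunction prevision by the necessary Fr\'echet-Hoeffding condition of Theorem \ref{THM:TEOREMAAI13} (the right theorem to use here, since no logical independence is assumed in the statement), and conclude by continuity and monotonicity of $\lambda\mapsto S_{\lambda}(x,y)$ between $S_M$ and $S_L$. Your argument is the exact dual, on the t-conorm side, of the proof of Theorem \ref{THM:TLAMBDA}, and it mirrors how the paper itself handles general $n$ (Theorems \ref{THM:FRECHETnDISJUNCTION} and \ref{THM:TCONORMn} are obtained from the conjunction results via De Morgan); it has the merit of not needing the functional equation (\ref{EQ:FRANKTNORMTCONORM}) at all. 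The paper's route is shorter because it reuses Theorem \ref{THM:TLAMBDA}, and it yields as a by-product that a single $\lambda$ works simultaneously for conjunction and disjunction, as noted in the comment following the paper's proof. One point you leave implicit --- as does the paper, symmetrically, with the sum rule --- is that coherence of $(x,y,w)$ on $\{A|H,B|K,(A|H)\vee(B|K)\}$ carries over to $(1-x,1-y,1-w)$ on $\{\no{A}|H,\no{B}|K,(\no{A}|H)\wedge(\no{B}|K)\}$, which is needed before Theorem \ref{THM:TEOREMAAI13} can be applied; this follows from $\no{E|H}=1-E|H$ together with (\ref{EQ:DM}), since the random gains of the two betting schemes differ only by a change of sign of the stakes.
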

	\begin{proof}
		From  (\ref{EQ:FRANKTCONORM}) it holds that $S_{\lambda}(1,1)=1$,  $S_{\lambda}(x,1)=S_{\lambda}(1,y)=1$, $S_{\lambda}(x,0)=x$,   $S_{\lambda}(0,y)=y$. Then,
		\begin{equation}\label{EQ:FRANKBISTCONORM}
		S_{\lambda}(A|H,B|K) =\left\{\begin{array}{ll}
		1, &\mbox{ if $AH\vee BK$ is true,}\\
		0, &\mbox{ if  $\widebar{A}H\widebar{B}H$ is true,}\\
		x, &\mbox{ if $\widebar{H}\no{B}K$ is true,}\\
		y, &\mbox{ if $\widebar{K}\no{A}H$ is true,}\\
		S_{\lambda}(x,y), &\mbox{ if $\widebar{H}\,\widebar{K}$ is true}.
		\end{array}
		\right.
		\end{equation}
		By recalling  the prevision sum rule (\cite[Section 6]{GiSa14}), it holds that 
\[
\prev[(A|H)\vee (B|K)]=P(A|H)+P(B|K)-\prev[(A|H)\wedge (B|K)],
\]
that is $w=x+y-z$, where $z=\prev[(A|H)\wedge (B|K)]$.
Moreover, by Theorem \ref{THM:TLAMBDA} there exists $\lambda\in[0,+\infty]$ such that $z=T_{\lambda}(x,y)$. Then, $w=x+y-T_{\lambda}(x,y)$ and hence, from (\ref{EQ:FRANKTNORMTCONORM}), there exists $\lambda\in[0,+\infty]$ such that $w=S_{\lambda}(x,y)$. Finally, 
from (\ref{EQ:DISJUNCTION}) and (\ref{EQ:FRANKBISTCONORM}), for each coherent assessment $(x,y,z)$ on $\{A|H,B|K,(A|H)\vee (B|K)\}$ there exists $\lambda\in[0,+\infty]$ such that 
$(A|H)\vee (B|K)=S_{\lambda}(A|H, B|K)$.
\end{proof}
As a further comment, we also observe that, 
for each coherent assessment $(x,y,z,w)$ on the family $\{A|H,B|K,(A|H)\wedge (B|K),(A|H)\vee (B|K)\}$, there exists $\lambda\in[0,+\infty]$ such that 
\[
(A|H)\vee(B|K)=(A|H)+(B|K)-T_{\lambda}(A|H,B|K)=S_{\lambda}(A|H, B|K).\]

We remark that in the   case of  some   logical dependencies among the basic events $A,B,H,K$,    the  Frank t-norm may represent the conjunction only for the values of  $\lambda$ in a  subset of $[0,+\infty]$. In the next section  we examine a case  where the subset is the interval $[0,1]$.
\subsection{The conjunction  $(A|H)\wedge (B|K)$, when $A=B$}
\label{SEC:5.3}
In this section we examine a case of logical dependencies by considering the conjunction $(A|H)\wedge (B|K)$ when $A=B$,  that is $(A|H)\wedge (A|K)$.
By setting $P(A|H)=x$, $P(A|K)=y$ and $\prev[(A|H)\wedge (A|K)]=z$, it holds that 
\[
(A|H)\wedge (A|K)=\left\{\begin{array}{ll}
1, &\mbox{ if }  AHK \mbox{ is true,}\\
0, &\mbox{ if }  \widebar{A}H \vee \widebar{A}K \mbox{ is true,}\\
x, &\mbox{ if }  \widebar{H}AK \mbox{ is true,}\\
y, &\mbox{ if }  AH\widebar{K}\mbox{ is true,}\\
z, &\mbox{ if }  \widebar{H}\widebar{K} \mbox{ is true,}
\end{array}
\right.
\]
that is 
\[
(A|H)\wedge (A|K)=AHK+x\widebar{H}AK+y\widebar{K}AH+z\widebar{H}\,\widebar{K}.
\]
In the next result we show that, for each coherent assessment $(x,y)$, the lower bound on $z$ is, not $T_L(x,y)$, but $T_P(x,y)$;  the upper bound is still $T_M(x,y)$.
\begin{theorem}\label{THM:A=B}	
	Let $A, H, K$ be three logically independent  events, with $H\neq \emptyset$, $K\neq \emptyset$. The set $\Pi$ of all coherent assessments $(x,y,z)$ on the family  $\F=\{A|H,A|K,(A|H)\wedge (A|K)\}$ is given by 
	\begin{equation}\label{EQ:PIA=B}
	\Pi=\{(x,y,z): (x,y)\in[0,1]^2,  T_P(x,y)= xy\leq  z\leq \min\{x,y\}=T_{M}(x,y)\}.
	\end{equation}
\end{theorem}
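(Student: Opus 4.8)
The plan is to reuse the constituent analysis of Example~\ref{EX:AHK} and to reduce the problem to pinning down the interval of coherent extensions $z$. The constituents of $\F$ contained in $H\vee K$ are exactly $C_1=AHK$, $C_2=\widebar{A}HK$, $C_3=\widebar{A}\widebar{H}K$, $C_4=\widebar{A}H\widebar{K}$, $C_5=A\widebar{H}K$, $C_6=AH\widebar{K}$, together with $C_0=\widebar{H}\widebar{K}$, since adjoining $\C=(A|H)\wedge(A|K)$ does not refine the partition. Reading off the value of $\C$ from its definition, the associated points are $Q_1=(1,1,1)$, $Q_2=(0,0,0)$, $Q_3=(x,0,0)$, $Q_4=(0,y,0)$, $Q_5=(x,1,x)$, $Q_6=(1,y,y)$, and $Q_0=(x,y,z)$. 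The decisive structural remark, reflecting the dependence $A=B$, is that a direct inspection of $Q_1,\dots,Q_6$ shows the third coordinate of each to equal both $\min\{q_{h1},q_{h2}\}$ and the product $q_{h1}q_{h2}$ (on every such constituent one of the first two coordinates is $0$ or $1$, so the two values agree). Since $(x,y)\in[0,1]^2$ is coherent for every $(x,y)$ by Example~\ref{EX:AHK}, Theorem~\ref{THM:FUND} guarantees that the coherent values of $z$ form a closed interval, and the whole statement reduces to showing this interval equals $[xy,\min\{x,y\}]$.

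For necessity, I write the system $(\Sigma)$ in (\ref{SYST-SIGMA}) for a solution $(\lambda_1,\dots,\lambda_6)$; matching coordinates gives $\lambda_1+x\lambda_3+x\lambda_5+\lambda_6=x$, $\lambda_1+y\lambda_4+\lambda_5+y\lambda_6=y$, $\lambda_1+x\lambda_5+y\lambda_6=z$ and $\lambda_1+\cdots+\lambda_6=1$. The upper bound $z\le\min\{x,y\}$ is the Fr\'echet--Hoeffding bound of Theorem~\ref{THM:TEOREMAAI13} (equivalently $z=\sum_h\lambda_h\min\{q_{h1},q_{h2}\}\le\min\{x,y\}$ by concavity of $\min$). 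For the lower bound, the main step is the algebraic identity obtained by forming $y\times(\text{first equation})+x\times(\text{second equation})-xy\times(\text{normalization})$ and eliminating $x\lambda_5+y\lambda_6$ via the $z$-equation, namely
\[
z-xy=(1-x)(1-y)\,\lambda_1+xy\,\lambda_2 .
\]
Because coherence forces $(\Sigma)$ to be solvable and the coefficients $(1-x)(1-y)$, $xy$ together with $\lambda_1,\lambda_2$ are all nonnegative, every coherent $z$ satisfies $z\ge xy$. Hence $\Pi$ is contained in the right-hand side of (\ref{EQ:PIA=B}).

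For sufficiency it suffices, by the interval property above, to certify coherence at the two endpoints. At $z=xy$ the vector $\Lambda=(0,0,\tfrac{1-y}{2},\tfrac{1-x}{2},\tfrac{y}{2},\tfrac{x}{2})$ of Example~\ref{EX:AHK} solves $(\Sigma)$ and returns $z=\lambda_1+x\lambda_5+y\lambda_6=xy$; moreover $\sum_{C_h\subseteq H}\lambda_h=\sum_{C_h\subseteq K}\lambda_h=\tfrac12>0$ and $\sum_{C_h\subseteq H\vee K}\lambda_h=1$, so $I_0=\emptyset$ (cf.\ (\ref{EQ:I0})) and the assessment is coherent by Theorem~\ref{CNES-PREV-I_0-INT}. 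At $z=\min\{x,y\}$ I may assume $x\le y$ by commutativity of the conjunction; for $x<1$ the vector with $\lambda_1=\tfrac{x(1-y)}{1-x}$, $\lambda_2=1-y$, $\lambda_5=\tfrac{y-x}{1-x}$ and the remaining components $0$ is a nonnegative solution of $(\Sigma)$ giving $z=x=\min\{x,y\}$, again with $I_0=\emptyset$; the degenerate cases $x=1$ (forcing $y=1$) and $y=1$ (where $xy=\min\{x,y\}$, so the first solution already applies) are immediate. Thus both endpoints are coherent, the interval of coherent $z$ contains $[xy,\min\{x,y\}]$, and together with necessity this yields equality, proving (\ref{EQ:PIA=B}).

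I expect the lower bound to be the only genuine obstacle: it is precisely where the logical dependence $A=B$ improves the generic Fr\'echet bound $\max\{x+y-1,0\}$ up to $xy$, and the crux is the identity $z-xy=(1-x)(1-y)\lambda_1+xy\lambda_2$, which makes the improvement transparent and explains why the product t-norm surfaces. The remaining work is the routine bookkeeping of verifying that the exhibited solutions stay nonnegative and keep $I_0$ empty on the boundary of $[0,1]^2$, exactly as in the treatment of coinciding points $Q_h$ at the end of Example~\ref{EX:AHK}.
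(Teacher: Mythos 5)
Your proposal is correct and takes essentially the same approach as the paper: the same constituent/point analysis, the same certificate $\Lambda=(0,0,\tfrac{1-y}{2},\tfrac{1-x}{2},\tfrac{y}{2},\tfrac{x}{2})$ with $I_0=\emptyset$ at $z=xy$, an explicit nonnegative solution with $I_0=\emptyset$ at $z=\min\{x,y\}$ (yours, $\lambda_1=\tfrac{x(1-y)}{1-x}$, $\lambda_2=1-y$, $\lambda_5=\tfrac{y-x}{1-x}$, differs from the paper's $(x,\tfrac{x(1-y)}{y},0,\tfrac{y-x}{y},0,0)$ for $x\le y$, $y>0$, but both are valid), and Theorem~\ref{THM:FUND} to conclude that the full interval $[xy,\min\{x,y\}]$ is coherent. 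Your identity $z-xy=(1-x)(1-y)\lambda_1+xy\lambda_2$ is exactly the paper's supporting-plane argument---the functional $f(X,Y,Z)=yX+xY-Z$, which equals $xy$ at $Q_3,Q_4,Q_5,Q_6$ and is at most $xy$ at $Q_1,Q_2$---written out algebraically, so the lower-bound necessity is established by the same mechanism in both proofs.
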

\begin{proof}
	We recall that, by Example  \ref{EX:AHK}, the assessment  $(x,y)$ is coherent for every $(x,y)\in[0,1]^2$. 
	Given any coherent assessment $(x,y)$, by Theorem \ref{THM:FUND} there exists an interval  $[z',z'']$  of  coherent extensions $z$ to $(A|H)\wedge(A|K)$. We will show that $z'=xy$ and $z''=\min\{x,y\}$.
	Let $\M=(x,y,z)$ be a prevision assessment on $\F$, with $(x,y)\in[0,1]^2$. 
	The constituents associated with the pair $(\F,\M)$ and contained in $H \vee K$ are:
	$ C_1=AHK$,     $C_2=\widebar{A}HK$, $C_3=\widebar{A}\widebar{H}K$, $C_4=\widebar{A}H\widebar{K}$,
	$C_5=A\widebar{H}K$,  $C_6=AH\widebar{K}$.
	The associated points $Q_h$'s are $Q_1=(1,1,1), Q_2=(0,0,0), Q_3=(x,0,0), Q_4=(0,y,0), Q_5=(x,1,x), Q_6=(1,y,y)$. With the further constituent $C_0=\widebar{H}\widebar{K}$ it is associated the point $Q_0=\mathcal{M}=(x,y,z)$. 		
	Considering the convex hull $\I$ (see Figure \ref{FIG:IEA1}) of $Q_1, \ldots, Q_6$, a necessary condition for the coherence of the prevision assessment $\M=(x,y,z)$ on $\F$ is that $\M \in \I$, that is the following system  must be solvable
	\[
	(\Sigma) \left\{
	\begin{array}{l}
	\lambda_1+x\lambda_3+x\lambda_5+\lambda_6=x,\;\;
	\lambda_1+y\lambda_4+\lambda_5+y\lambda_6=y,\;\;
	\lambda_1+x\lambda_5+y\lambda_6=z,\\
	\sum_{h=1}^6\lambda_h=1,\;\;
	\lambda_h\geq 0,\; h=1,\ldots,6.
	\end{array}
	\right.
	\]
	First of all, we observe that solvability of $(\Sigma)$ requires that $z\leq x$ and $z\leq y$, that is $z\leq \min\{x,y\}$; thus $z''\leq \min\{x,y\}$. We now verify that $(x,y,z)$, with $(x,y)\in[0,1]^2$ and $z=\min\{x,y\}$, is  coherent, from which it follows that $z''=\min\{x,y\}$. We distinguish two cases: $(i)$ $x\leq y$ and $(ii)$ $x> y$. \\		
	Case $(i)$. 
	In this case $z=\min\{x,y\}=x$. If $y=0$ 
	the system $(\Sigma)$ becomes
	\[ 
	\begin{array}{l}
	\lambda_1+\lambda_6=0,\;\;
	\lambda_1+\lambda_5=0,\;\;
	\lambda_1=0,\;
	\lambda_2+\lambda_3+\lambda_4=1,\;\;
	\lambda_h\geq 0,\;\; h=1,\ldots,6.
	\end{array}
	\]
	which is  clearly solvable. In particular there exist solutions with $\lambda_2>0,\lambda_3>0, \lambda_4>0$, by 
	Theorem \ref{CNES-PREV-I_0-INT},
	as the set $I_0$ is empty the solvability of $(\Sigma)$ is sufficient for coherence of the assessment $(0,0,0)$. 
	If $y>0$ the system $(\Sigma)$ is solvable and a solution is $	\Lambda=(\lambda_1,\ldots, \lambda_6)=(x,\frac{x(1-y)}{y},0,\frac{y-x}{y},0,0)$.
	We observe that, if $x>0$, then $\lambda_1>0$ and $I_0=\emptyset$ because $\C_1=HK\subseteq H\vee K$, so that $\M=(x,y,x)$ is coherent. If $x=0$ (and hence $z=0$), then $\lambda_4=1$ and $I_0\subseteq \{2\}$. Then, as the sub-assessment $P(A|K)=y$ is coherent, it follows that the assessment $\M=(0,y,0)$ is coherent too.\\
	Case $(ii)$. The system is solvable and a solution is $
	\Lambda=(\lambda_1,\ldots, \lambda_6)=(y,\frac{y(1-x)}{x},\frac{x-y}{x},0,0,0).$
	We observe that, if $y>0$, then $\lambda_1>0$ and $I_0=\emptyset$ because $\C_1=HK\subseteq H\vee K$, so that $\M=(x,y,y)$ is coherent. If $y=0$ (and hence $z=0$), then $\lambda_3=1$ and $I_0\subseteq \{1\}$. Then, as the sub-assessment $P(A|H)=x$ is coherent, it follows that the assessment $\M=(x,0,0)$ is coherent too.
	Thus, for every $(x,y)\in[0,1]^2$, the assessment $(x,y,\min\{x,y\})$ is coherent  and hence   the upper bound on $z$ is $z''=\min\{x,y\}=T_M(x,y)$. \\
	We now verify that $(x,y,xy)$, with $(x,y)\in[0,1]^2$ is coherent; moreover we  show that  $(x,y,z)$, with $z<xy$, is not coherent and   the lower bound for $z$ is $z'=xy$. 
	First of all, we observe that $\M=(1-x)Q_4+xQ_6$, so that a solution of $(\Sigma)$ is $\Lambda_1=(0,0,0,1-x,0,x)$.
	Moreover, $\M=(1-y)Q_3+yQ_5$, so that another solution is $\Lambda_2=(0,0,1-y,0,y,0)$. Then 
	$
	\Lambda=\frac{\Lambda_1+\Lambda_2}{2}=(0,0,\frac{1-y}{2},\frac{1-x}{2},\frac{y}{2},\frac{x}{2})
	$
	is a solution of $(\Sigma)$ such that $I_0=\emptyset$. Thus the assessment $(x,y,xy)$ is coherent for every $(x,y)\in[0,1]^2$. 
	In order to verify that  $xy$ is the lower bound on $z$ we observe that the points $Q_3,Q_4,Q_5,Q_6$ belong to a plane $\pi$ of equation: 
	$yX+xY-Z=xy$,	where $X,Y,Z$ are the axis' coordinates. Now, by considering the function $f(X,Y,Z)= yX+xY-Z$, we observe that for each constant $k$ the equation $f(X,Y,Z)=k$ represents a plane which is parallel to $\pi$ and coincides with $\pi$ when  $k=xy$. We also observe that 
	$f(Q_1)=f(1,1,1)=x+y-1=T_L(x,y)\leq xy=T_P(x,y)$,  
	$f(Q_2)=f(0,0,0)=0 \leq xy=T_P(x,y)$,  and
	$f(Q_3)=f(Q_4)=f(Q_5)=f(Q_6)= xy=T_P(x,y)$.
	Then, for every $\P=\sum_{h=1}^6\lambda_hQ_h$, with $\lambda_h\geq 0$ and $\sum_{h=1}^6\lambda_h=1$, that is $\P\in \I$, it holds that 
	$
	f(\P)=f\big(\sum_{h=1}^6\lambda_hQ_h\big)=\sum_{h=1}^6\lambda_hf(Q_h)\leq  xy.
	$
	On the other hand, given any $a>0$, by considering  $\P=(x,y,xy-a)$ it holds that 
	$
	f(\P)=f(x,y,xy-a)=xy+xy-xy+a= xy+a>xy.
	$
	Therefore, for any given $a>0$ the assessment $(x,y,xy-a)$ is not coherent because $(x,y,xy-a)\notin \I$. Then, the lower bound on $z$ is $z'=xy=T_{P}(x,y)$. 
	Thus,  the set  of all coherent assessments $(x,y,z)$ on $\F$ is the set $\Pi$ in 
	(\ref{EQ:PIA=B}).
	\
\end{proof}
\begin{figure}[tpbh]
	\centering
	\includegraphics[width=0.70\linewidth]{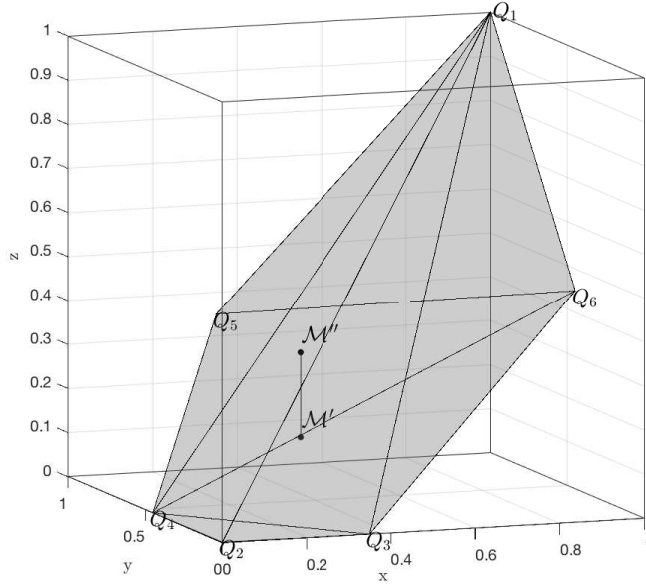}
	\vspace{-0.5cm}
	\caption{Convex hull $\I$ of  the points $Q_1, Q_2,Q_3, Q_4, Q_5,Q_6$.  
		$\M'=(x,y,z'), \M''=(x,y,z'')$, where $(x,y)\in[0,1]^2$, $z'=xy$, $z''=\min\{x,y\}$. In the figure the numerical  values are: $x=0.35$, $y=0.45$, $z'=0.1575$, and  $z''=0.35$.}
	\label{FIG:IEA1}
\end{figure}

Based on Theorem \ref{THM:A=B}, we can give a  result which is similar to Theorem \ref{THM:TNORMn}, with $n=2$; but in this case $\lambda$ belongs to the interval $[0,1]$.
	\begin{theorem}\label{THM:TNORMA=B}
	Let $A,H,K$ be logically independents events, with $H\neq \emptyset$, $K\neq \emptyset$. The set  of all prevision coherent assessments $\M=(x,y,z)$ on the family $\F=\{A|H,A|K, (A|H)\wedge (A|K)\}$ is the set
	\begin{equation}\label{}
	\begin{array}{ll}
	 \{(x,y,z):(x,y)\in[0,1]^2,z= T_{\lambda}(x,y),\lambda\in[0,1]\}.
	\end{array}	
	\end{equation}	
\end{theorem}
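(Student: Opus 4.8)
The plan is to derive this statement as a direct consequence of Theorem~\ref{THM:A=B} together with the continuity and monotonicity of the Frank t-norm $T_\lambda$ in its parameter $\lambda$. The crucial observation is that the two bounds appearing in Theorem~\ref{THM:A=B} are themselves Frank t-norms: recalling the definition in (\ref{EQ:FRANK}), the lower bound $T_P(x,y)=xy$ coincides with $T_1(x,y)$, while the upper bound $T_M(x,y)=\min\{x,y\}$ coincides with $T_0(x,y)$. Thus Theorem~\ref{THM:A=B} can be read as asserting that, for each $(x,y)\in[0,1]^2$, the set of coherent extensions $z$ to $(A|H)\wedge(A|K)$ is exactly the interval $[T_1(x,y),T_0(x,y)]$.

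First I would establish the inclusion of the t-norm set into $\Pi$. Fix any $\lambda\in[0,1]$ and any $(x,y)\in[0,1]^2$. Since $T_\lambda$ is decreasing with respect to $\lambda$, we have $T_1(x,y)\le T_\lambda(x,y)\le T_0(x,y)$, that is $xy\le T_\lambda(x,y)\le\min\{x,y\}$. By Theorem~\ref{THM:A=B} the assessment $(x,y,T_\lambda(x,y))$ is therefore coherent, which yields the desired inclusion. For the reverse inclusion I would take any coherent $(x,y,z)$; by Theorem~\ref{THM:A=B} we have $z\in[T_1(x,y),T_0(x,y)]=[xy,\min\{x,y\}]$. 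The map $\lambda\mapsto T_\lambda(x,y)$ is continuous on $[0,1]$ and takes the value $T_0(x,y)=\min\{x,y\}$ at $\lambda=0$ and the value $T_1(x,y)=xy$ at $\lambda=1$; hence, by the intermediate value theorem, there exists $\lambda\in[0,1]$ with $z=T_\lambda(x,y)$. This places $(x,y,z)$ in the t-norm set and completes the identification of the two sets.

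The argument is essentially routine once Theorem~\ref{THM:A=B} is in hand, so there is no serious obstacle to overcome: all the genuine work — the geometric computation of the lower bound $z'=xy$ via the supporting plane $yX+xY-Z=xy$ and of the upper bound $z''=\min\{x,y\}$ — has already been carried out there. The only point requiring care is the bookkeeping on the range of $\lambda$. In the logically independent case (Theorem~\ref{THM:TNORMn} with $n=2$) the coherent extensions fill the whole interval $[T_{+\infty}(x,y),T_0(x,y)]$ and $\lambda$ ranges over $[0,+\infty]$, whereas here the lower bound is raised from the Lukasiewicz value $T_{+\infty}(x,y)=\max\{x+y-1,0\}$ to the product value $T_1(x,y)=xy$, so only the sub-range $\lambda\in[0,1]$ is realized. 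It is precisely this restriction that the statement records, and it reflects the additional logical constraint imposed by $A=B$.
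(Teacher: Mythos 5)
Your proof is correct and follows essentially the same route as the paper: the paper's own proof simply says ``by exploiting Theorem~\ref{THM:A=B}, the proof is the same as in Theorem~\ref{THM:TNORMn}'', i.e.\ it uses the bounds $[T_P(x,y),T_M(x,y)]=[T_1(x,y),T_0(x,y)]$ from Theorem~\ref{THM:A=B}, monotonicity of $T_\lambda$ in $\lambda$ for one inclusion, and continuity of $\lambda\mapsto T_\lambda(x,y)$ (the intermediate value argument) for the other. You have merely written out explicitly the steps the paper leaves implicit by reference.
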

\begin{proof}
	By exploiting Theorem \ref{THM:A=B},
	the proof is the same as in  Theorem \ref{THM:TNORMn}.
\end{proof}
We observe that 
  for every $\lambda\in[0,1]$, it is possible to  define the
conjunction   as  $(A|H)\wedge (A|K)=T_{\lambda}(A|H, A|K)$,  for every $(x,y)\in[0,1]^2$. Moreover, for some coherent $(x,y)$, it may happen that 
$T_{\lambda}(A|H,A|K)$ is not a conjunction when $\lambda\in(1+\infty]$, as shown 
by the example below.
\begin{example}
Let $A,H,K$ be logically independents events, with $H\neq \emptyset$, $K\neq \emptyset$. 
If, for instance,  $(x,y)=(\frac{1}{2},\frac{1}{2})$. Then, by Theorem \ref{THM:A=B}, the extension  $z$ is coherent if and only if $z\in[T_1(\frac{1}{2},\frac{1}{2}), T_0(\frac{1}{2},\frac{1}{2})]=[\frac{1}{4},\frac{1}{2}]$. Moreover,
as $T_{\lambda}(\frac{1}{2},\frac{1}{2})$ is   decreasing with respect to $\lambda$,
  it holds that
\[
T_{\lambda}(\frac{1}{2},\frac{1}{2})<T_{1}(\frac{1}{2},\frac{1}{2})=\frac14,\;\; \forall\,\, \lambda\in(1+\infty].
\]
Then, the extension $z=T_{\lambda}(\frac{1}{2},\frac{1}{2})$, with  $\lambda\in(1+\infty]$, is not coherent; thus, $T_{\lambda}(A|H,A|K)$, with  $\lambda\in(1+\infty]$ and $(x,y)=(\frac12,\frac12)$, is not a conjunction.

We also observe that,  in particular cases, 
$T_{\lambda}(A|H,A|K)$ is a conjunction when $\lambda\in(1+\infty]$. For instance, if $x=0$, or $y=0$, it holds that, for every $\lambda\in[0,+\infty]$,  
$z=T_{\lambda}(x,y)=0$   is the unique  coherent extension of the assessment $(x,y)$. Then, $T_{\lambda}(A|H,A|K)$, with  $x=0$, or $y=0$,   is a conjunction
for every $\lambda\in[0,+\infty]$.
\end{example}

The next result consider the particular case  where $H$ and  $K$ are incompatible.
\begin{theorem}\label{THM:SETPROD}
	Let $A, H, K$ be three events, with  $A$  logically independent from both $H$ and $K$, with $H\neq \emptyset$, $K\neq \emptyset$, $HK=\emptyset$. The set of all coherent assessments $(x,y,z)$ on the family  $\F=\{A|H,A|K,(A|H)\wedge (A|K)\}$ is given by $\{(x,y,z): (x,y)\in[0,1]^2,  z= xy=T_P(x,y)\}.$
\end{theorem}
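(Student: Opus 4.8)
The plan is to reuse the geometric machinery developed for the logically dependent case $A=B$ (Theorem~\ref{THM:A=B}) and to observe that imposing $HK=\emptyset$ deletes exactly the two constituents whose associated points lie strictly off the relevant plane, so that the interval of coherent extensions collapses to a single point. First I would list the constituents of $\F$ contained in $H\vee K$. Since $A$ is logically independent from $H$ and $K$ and $HK=\emptyset$, these are, as in Example~\ref{EX:AHKphi}, $C_1=\no{A}\no{H}K$, $C_2=\no{A}H\no{K}$, $C_3=A\no{H}K$, $C_4=AH\no{K}$, together with $C_0=\no{H}\,\no{K}$. Evaluating $(A|H)\wedge(A|K)$ on each constituent (note that $AHK=\emptyset$ suppresses the value $1$, while $\no{H}AK=AK$ and $AH\no{K}=AH$), the associated points become $Q_1=(x,0,0)$, $Q_2=(0,y,0)$, $Q_3=(x,1,x)$, $Q_4=(1,y,y)$, with $Q_0=\M=(x,y,z)$. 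These are precisely the four points $Q_3,Q_4,Q_5,Q_6$ occurring in the proof of Theorem~\ref{THM:A=B}; the two points $(1,1,1)$ and $(0,0,0)$ generated there by $AHK$ and $\no{A}HK$ are now gone because $HK=\emptyset$.

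The key step is the plane argument already used in Theorem~\ref{THM:A=B}: each of $Q_1,\ldots,Q_4$ satisfies $yX+xY-Z=xy$, so all four lie on a common plane $\pi$. Consequently the convex hull $\I$ of these points is entirely contained in $\pi$, and the necessary condition $\M\in\I$ forces $yx+xy-z=xy$, that is $z=xy=T_P(x,y)$. This is the heart of the result: whereas in Theorem~\ref{THM:A=B} the additional points $(1,1,1),(0,0,0)$ lie strictly on the side $yX+xY-Z\le xy$ and thereby open up the whole interval $[T_P(x,y),T_M(x,y)]$ of coherent values, here the hull degenerates onto $\pi$ itself and the lower and upper Fr\'echet--Hoeffding bounds coincide.

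For the converse I would exhibit an explicit solution of $(\Sigma)$ at $z=xy$ and conclude coherence through Theorem~\ref{CNES-PREV-I_0-INT}. The vector $\Lambda=(\tfrac{1-y}{2},\tfrac{1-x}{2},\tfrac{y}{2},\tfrac{x}{2})$ solves $(\Sigma)$ by a short substitution, is nonnegative for every $(x,y)\in[0,1]^2$, and gives $\sum_{C_h\subseteq H}\lambda_h=\sum_{C_h\subseteq K}\lambda_h=\tfrac12>0$ together with $\sum_{C_h\subseteq H\vee K}\lambda_h=1>0$; hence $M_1,M_2,M_3>0$, so $I_0=\emptyset$ and solvability of $(\Sigma)$ alone yields coherence of $(x,y,xy)$. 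Combining the two halves, $z=xy$ is at once necessary and sufficient, which is the asserted set. The one point requiring care, and the main obstacle, is checking that the degenerate configurations $x\in\{0,1\}$ or $y\in\{0,1\}$ (where some $Q_h$ coincide) do not disturb $I_0=\emptyset$; but since the displayed $\Lambda$ keeps the $H$-, $K$- and $(H\vee K)$-masses fixed at $\tfrac12,\tfrac12,1$ independently of $x$ and $y$, the argument is uniform and no separate case analysis is needed.
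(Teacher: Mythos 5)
Your proof is correct, but it follows a genuinely different route from the paper's. The paper's own proof of this theorem is a short algebraic computation: since $HK=\emptyset$, one has $\no{H}AK=AK$ and $AH\no{K}=AH$, so the conjunction collapses to $(A|H)\wedge (A|K)=xAK|(H\vee K)+yAH|(H\vee K)$; then linearity of prevision together with the compound-prevision (product) rule gives $z=xyP(K|(H\vee K))+xyP(H|(H\vee K))=xy$, the uniqueness and existence of the coherent extension being guaranteed by Theorem~\ref{THM:FUND}. You instead adapt the geometric machinery of Theorem~\ref{THM:A=B}: you correctly identify the four surviving points $Q_1=(x,0,0)$, $Q_2=(0,y,0)$, $Q_3=(x,1,x)$, $Q_4=(1,y,y)$, observe that they all satisfy $yX+xY-Z=xy$ so that the convex hull $\I$ degenerates onto that plane (forcing $z=xy$ as a necessary condition), and then prove sufficiency via the explicit solution $\Lambda=(\tfrac{1-y}{2},\tfrac{1-x}{2},\tfrac{y}{2},\tfrac{x}{2})$ with $I_0=\emptyset$ and Theorem~\ref{CNES-PREV-I_0-INT}. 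Your version is more self-contained — it needs neither the product rule for conditional previsions nor Theorem~\ref{THM:FUND}, and it makes the collapse of the Fr\'echet--Hoeffding interval geometrically transparent (the two off-plane vertices $(1,1,1)$ and $(0,0,0)$ of the $A=B$ case disappear exactly because $HK=\emptyset$); the paper's version is shorter and exploits the structural simplification of the conjunction directly. One cosmetic remark: your aside that in Theorem~\ref{THM:A=B} the points $(1,1,1)$ and $(0,0,0)$ lie \emph{strictly} below the plane is not accurate at the boundary (equality holds, e.g., when $x=1$ or $xy=0$), but this plays no role in your argument for the present theorem.
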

\begin{proof}
Let $\M=(x,y,z)$ be a prevision assessment on $\F$.
We recall that, by Example  \ref{EX:AHKphi}, the  coherence 	of $(x,y)$ amounts to $(x,y)\in[0,1]^2$. Moreover, 
	we observe that $\no{H}K=K$ and $H\no{K}=H$ and  
	\[
	(A|H)\wedge (A|K)=
	(xA\widebar{H}K+yAH\widebar{K})|(H\vee K)=xAK|(H\vee K)+yAH|(H\vee K).
	\]
	Then,
\[
z=	xP(AK|(H\vee K))+yP(AH|(H\vee K))=
xyP(K|(H\vee K))+xyP(H|(H\vee K))=xy=T_P(x,y).
\]
Thus, the set of all coherent assessments $(x,y,z)$ on the family  $\F=\{A|H,A|K,(A|H)\wedge (A|K)\}$ is given by $\{(x,y,z): (x,y)\in[0,1]^2,  z= xy=T_P(x,y)\}$.
\end{proof}	

\begin{remark}\label{REM:HKINCO}
	We remark that, when $HK=\emptyset$,  $T_{\lambda}(A|H,A|K)$  represents the conjunction $(A|H)\wedge(A|K)$ only if  $\lambda=1$. Indeed, 
	from Theorems \ref{THM:TLAMBDA} and  \ref{THM:SETPROD},  it holds that 
	\[
	(A|H)\wedge (A|K)=(A|H)\cdot (A|K)=T_P(A|H,A|K)=T_1(A|H,A|K), \;\; \mbox{ when } HK=\emptyset.
	\]
\end{remark}	
We point out again that, as shown by Theorem 
\ref{THM:TNORMA=B}  and by Remark \ref{REM:HKINCO},
in case of some logical dependencies
 to assign conditional previsions and to 
 represent conjunctions 
 by means of a Frank t-norm $T_{\lambda}$ is consistent only for some values of $\lambda$.  For instance, given any assessment  $(x,y)$ on $\{A|H,B|K\}$,  with $0<x<1$, $0<y<1$,
  the assessment 
  $\prev[(A|H)\wedge (B|K)]=T_{L}(x,y)$
 is not coherent, because $\max\{x+y-1,0\}<xy$. Moreover, 
  $T_{L}(A|H,A|K)=T_{+\infty}(A|H,A|K)$ is not a conjunction.

\section{Some further results on Frank t-norms}
\label{SEC:6}
In  this section  we give some particular results
on Frank t-norms and coherence of prevision assessments on the family
$\F=\{\C_{1},\C_{2},\C_{3}, \C_{12}, \C_{13}, \C_{23}, \C_{123}\}$, where $\C_i = E_i|H_i, \C_{ij} =  (E_i|H_i)\wedge(E_j|H_j)$, and $\C_{123} = (E_1|H_1)\wedge(E_2|H_2) \wedge(E_3|H_3)$. We set $\prev(\C_i)=x_i$, $i=1,2,3$,
$\prev(\C_{ij})=x_{ij}$, $\{i,j\}\subset \{1,2,3\}$, and $\prev(\C_{123})=x_{123}$.
 In particular, 
we show that, under logical independence,  
the  assessment  
\[
\mathcal{M}=(x_1,x_2,x_3,x_{12},x_{13},x_{23},x_{123})=(x_1,x_2,x_3,T_{\lambda}(x_1,x_2),T_{\lambda}(x_1,x_3),T_{\lambda}(x_2,x_3),T_{\lambda}(x_1,x_2,x_3))
\] on
$\F$ is coherent for every $(x_1,x_2,x_3)\in[0,1]^3$ when $T_{\lambda}$ is the minimum t-norm $T_M=T_0$, or $T_{\lambda}$ is the product t-norm, $T_P=T_1$. Moreover, when $T_{\lambda}$ is the Lukasiewicz t-norm $T_L=T_{+\infty}$,   the coherence of $\mathcal{M}$ is not assured. 
We first observe that, 
by Definition \ref{DEF:CONGn},
the conjunction  $\C_{123}=(E_1|H_1)\wedge (E_2|H_2)\wedge(E_3|H_3)$ is 
\begin{equation}\label{EQ:CONJUNCTION3}
\C_{123}=
\left\{
\begin{array}{llll}
1, &\mbox{ if } E_1H_1E_2H_2E_3H_3 \mbox{ is true}\\
0, &\mbox{ if } \no{E}_1H_1 \vee \no{E}_2H_2 \vee \no{E}_3H_3 \mbox{ is true},\\
x_1,& \mbox{ if } \no{H}_1E_2H_2E_3H_3 \mbox{ is true},\\
x_2,& \mbox{ if } \no{H}_2E_1H_1E_3H_3 \mbox{ is true},\\
x_3, &\mbox{ if } \no{H}_3E_1H_1E_2H_2 \mbox{ is true}, \\
x_{12}, &\mbox{ if } \no{H}_1\no{H}_2E_3H_3 \mbox{ is true}, \\
x_{13}, &\mbox{ if } \no{H}_1\no{H}_3E_2H_2 \mbox{ is true}, \\
x_{23}, &\mbox{ if } \no{H}_2\no{H}_3E_1H_1 \mbox{ is true}, \\
x_{123}, &\mbox{ if } \no{H}_1\no{H}_2\no{H}_3 \mbox{ is true}. 
\end{array}
\right.
\end{equation}

The next result characterizes the set of all coherent assessments  on $\F$  (\cite[Theorem 15]{GiSa19}).
\begin{theorem}\label{THM:PIFOR3}
	Assume that  the events $E_1, E_2, E_3, H_1, H_2, H_3$  are logically independent, with $H_1\neq \emptyset, H_2\neq \emptyset, H_3\neq \emptyset$.
	Then,	the set $\Pi$ of all coherent assessments  $\mathcal{M}=(x_1,x_2,x_3,x_{12},x_{13},x_{23},x_{123})$ on
	$\F=\{\C_{1},\C_{2},\C_{3}, \C_{12}, \C_{13}, \C_{23}, \C_{123}\}$ is the set of points $(x_1,x_2,x_3,x_{12},x_{13},x_{23},x_{123})$ 
	which satisfy the following  conditions
	\begin{equation}
	\small
	\label{EQ:SYSTEMPISTATEMENT}
	\left\{
	\begin{array}{l}
	(x_1,x_2,x_3)\in[0,1]^3,\\
	\max\{x_1+x_2-1,x_{13}+x_{23}-x_3,0\}\leq x_{12}\leq \min\{x_1,x_2\},\\
	\max\{x_1+x_3-1,x_{12}+x_{23}-x_2,0\}\leq x_{13}\leq \min\{x_1,x_3\},\\
	\max\{x_2+x_3-1,x_{12}+x_{13}-x_1,0\}\leq x_{23}\leq \min\{x_2,x_3\},\\
	1-x_1-x_2-x_3+x_{12}+x_{13}+x_{23}\geq 0,\\
	x_{123}\geq \max\{0,x_{12}+x_{13}-x_1,x_{12}+x_{23}-x_2,x_{13}+x_{23}-x_3\},\\
	x_{123}\leq  \min\{x_{12},x_{13},x_{23},1-x_1-x_2-x_3+x_{12}+x_{13}+x_{23}\}.
	\end{array}
	\right.
	\end{equation}
\end{theorem}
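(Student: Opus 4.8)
The plan is to reduce coherence to membership in a convex polytope and then to match that polytope with the system (\ref{EQ:SYSTEMPISTATEMENT}). First I would list the $3^3=27$ constituents generated by $\{E_1|H_1,E_2|H_2,E_3|H_3\}$ under logical independence, and note as in (\ref{EQ:CONJUNCTION3}) that each constituent fixes the value of every member of $\F$, so the associated points $Q_h$ lie in $\mathbb{R}^7$. I then single out the $2^3=8$ points attached to the constituents $K_{abc}\subseteq H_1H_2H_3$; these are binary and read off as the moment vectors $Q_{abc}=(a,b,c,ab,ac,bc,abc)$ of the truth patterns of $(E_1,E_2,E_3)$, e.g. $Q_{123}=(1,1,1,1,1,1,1)$ and $Q_{\no{1}\no{2}\no{3}}=(0,\ldots,0)$. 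Writing $\I^{*}$ for their convex hull and $\I$ for the convex hull of all $Q_h$, one has $\I^{*}\subseteq\I$. A solution of $(\Sigma)$ supported on the eight $K_{abc}$ puts full mass inside $H_1H_2H_3$, hence inside each conditioning event $H_i$ and $H_i\vee H_j$, so $I_0=\emptyset$; by the argument of Theorem \ref{THM:SIGMASTARn} and Theorem \ref{CNES-PREV-I_0-INT} it follows that $\M\in\I^{*}$ already implies coherence. Thus it suffices to show that $\Pi=\I^{*}$ and that $\I^{*}$ is exactly the solution set of (\ref{EQ:SYSTEMPISTATEMENT}).

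For necessity I would check that every coherent $\M$ satisfies (\ref{EQ:SYSTEMPISTATEMENT}). Since the restriction of a coherent assessment to any subfamily is coherent, the cube constraint $(x_1,x_2,x_3)\in[0,1]^3$ and the pairwise Fr\'echet bounds $\max\{x_i+x_j-1,0\}\le x_{ij}\le\min\{x_i,x_j\}$ follow from Theorem \ref{THM:FRECHET} applied to $\{E_i|H_i,E_j|H_j,\C_{ij}\}$. The bounds on $x_{123}$ in the last two lines, the inequality $1-x_1-x_2-x_3+x_{12}+x_{13}+x_{23}\ge0$, and the coupling inequalities such as $x_{12}\ge x_{13}+x_{23}-x_3$ are obtained from Theorem \ref{THM:TEOREMAAI13} together with a direct inspection showing that each of these linear inequalities holds at all eight vertices $Q_{abc}$; being linear they then persist on $\I^{*}$, and after proving $\Pi=\I^{*}$ they hold for every coherent $\M$.

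The heart of the proof is sufficiency. Given $\M$ satisfying (\ref{EQ:SYSTEMPISTATEMENT}) I would exhibit weights $p_{abc}\ge0$ with $\sum p_{abc}=1$ and $\M=\sum_{a,b,c}p_{abc}Q_{abc}$, placing $\M\in\I^{*}$. Since the $Q_{abc}$ are the moment vectors, the weights are forced by inverting the moment map through inclusion--exclusion:
\[
\begin{array}{ll}
p_{111}=x_{123}, & p_{110}=x_{12}-x_{123},\\
p_{101}=x_{13}-x_{123}, & p_{011}=x_{23}-x_{123},\\
p_{100}=x_1-x_{12}-x_{13}+x_{123}, & p_{010}=x_2-x_{12}-x_{23}+x_{123},\\
p_{001}=x_3-x_{13}-x_{23}+x_{123}, & p_{000}=1-x_1-x_2-x_3+x_{12}+x_{13}+x_{23}-x_{123}.
\end{array}
\]
The identities $\sum p_{abc}=1$ and the reproduction of every $x$ are automatic, so the only thing to verify is $p_{abc}\ge0$, and this is exactly what the last two lines of (\ref{EQ:SYSTEMPISTATEMENT}) assert: $x_{123}\ge0$ together with the three lower bounds in line~6 give $p_{111},p_{100},p_{010},p_{001}\ge0$, while $x_{123}\le\min\{x_{12},x_{13},x_{23}\}$ and $x_{123}\le 1-x_1-x_2-x_3+x_{12}+x_{13}+x_{23}$ give $p_{110},p_{101},p_{011},p_{000}\ge0$. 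Hence $\M\in\I^{*}\subseteq\I$ with $I_0=\emptyset$, so $\M$ is coherent; this also yields $\Pi=\I^{*}$.

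I expect the main obstacle to lie in reconciling the vertex description $\I^{*}$ with the facet description (\ref{EQ:SYSTEMPISTATEMENT}), specifically in seeing that lines 1--5 (those not involving $x_{123}$) are precisely the conditions guaranteeing that the interval for $x_{123}$ in lines 6--7 is non-empty. Concretely, eliminating $x_{123}$ by Fourier--Motzkin --- imposing that each lower bound of line 6 does not exceed each upper bound of line 7 --- must recover exactly the pairwise Fr\'echet bounds, the three coupling inequalities, and $1-x_1-x_2-x_3+x_{12}+x_{13}+x_{23}\ge0$, with no further inequality. Carrying out this elimination carefully, and checking that it produces neither too few nor too many constraints, is the delicate bookkeeping step; once it is done, the explicit weights of the preceding paragraph together with necessity give $\Pi=\I^{*}$ and hence the characterization (\ref{EQ:SYSTEMPISTATEMENT}).
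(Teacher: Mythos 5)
First, a point of reference: the paper does not prove Theorem \ref{THM:PIFOR3} at all; it recalls it from \cite[Theorem 15]{GiSa19}, so there is no internal proof to compare against, and your proposal must stand on its own. Its sufficiency half does: the eight constituent points inside $H_1H_2H_3$ are indeed $Q_{abc}=(a,b,c,ab,ac,bc,abc)$, your inclusion--exclusion weights $p_{abc}$ reproduce $\M$ and sum to one, their nonnegativity is exactly the content of the last two lines of (\ref{EQ:SYSTEMPISTATEMENT}), and a solution of $(\Sigma)$ supported inside $H_1H_2H_3$ forces $I_0=\emptyset$, so Theorem \ref{CNES-PREV-I_0-INT} yields coherence. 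This correctly shows that every solution of (\ref{EQ:SYSTEMPISTATEMENT}) lies in $\I^*$ and is coherent, i.e. $\I^*\subseteq\Pi$.

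The genuine gap is in the necessity direction. The cube constraint and the pairwise Fr\'echet bounds are fine (coherence of sub-assessments plus Theorem \ref{THM:FRECHET}), but for the coupling inequalities such as $x_{12}\geq x_{13}+x_{23}-x_3$, for line 5, and for the sharper bounds on $x_{123}$ in lines 6--7, your argument is circular: you check them at the eight vertices $Q_{abc}$, extend by linearity to $\I^*$, and then transfer them to $\Pi$ ``after proving $\Pi=\I^*$''. But $\Pi=\I^*$ requires the inclusion $\Pi\subseteq\I^*$, which is exactly the necessity claim being proved; your sufficiency argument only delivers $\I^*\subseteq\Pi$, so the sentence ``this also yields $\Pi=\I^*$'' is a non sequitur. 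Nor does Theorem \ref{THM:TEOREMAAI13} close the hole: it only gives $\max\{x_1+x_2+x_3-2,0\}\leq x_{123}\leq\min\{x_1,x_2,x_3\}$, strictly weaker than lines 6--7. The fix is to work with the full convex hull $\I$ rather than $\I^*$: coherence implies $\M\in\I$, where $\I$ is spanned by \emph{all} constituent points contained in $H_1\vee H_2\vee H_3$, including the ones whose coordinates involve the assessed values $x_i$, $x_{ij}$, $x_{123}$ themselves (e.g. the point of $\no{H}_1\no{H}_2E_3H_3$ is $(x_1,x_2,1,x_{12},x_1,x_2,x_{12})$). For each remaining inequality one verifies that the corresponding linear functional --- for instance $\C_{12}-\C_{123}$, or $1-\C_1-\C_2-\C_3+\C_{12}+\C_{13}+\C_{23}-\C_{123}$ --- is nonnegative at every one of these points, the verification at points meeting some $\no{H}_i$ using the inequalities already established (such as $0\leq x_i\leq 1$ and the pairwise Fr\'echet bounds); linearity and $\M\in\I$ then give the inequality at $\M$. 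With that replacement your outline becomes a complete proof; the concluding Fourier--Motzkin step you flag as the main obstacle is in fact not needed for the theorem (it pertains to Remark \ref{REM:INEQPI} and Corollary \ref{COR:PIFOR3}, not to the equivalence itself).
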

\begin{remark}\label{REM:INEQPI}
	As it can be verified, the last two inequalities in  (\ref{EQ:SYSTEMPISTATEMENT}), that is
	\begin{equation}\label{EQ:INEQPI}
	\begin{array}{ll}
	\max\{0,x_{12}+x_{13}-x_1,x_{12}+x_{23}-x_2,x_{13}+x_{23}-x_3\}\,\;\leq  \;x_{123}\;\leq \\
	\leq\;\; 
	\min\{x_{12},x_{13},x_{23},1-x_1-x_2-x_3+x_{12}+x_{13}+x_{23}\},
	\end{array}
	\end{equation}
	imply all the other inequalities in (\ref{EQ:SYSTEMPISTATEMENT}).   For instance, in order to prove that $x_1\leq 1$, we observe that from  (\ref{EQ:INEQPI}) it holds that 
	\[
	x_{12}+x_{23}-x_2\leq  1-x_1-x_2-x_3+x_{12}+x_{13}+x_{23},
	\]
	from which it follows  
	\[
	x_{12}+x_{23}-x_2+x_2+x_3-x_{12}-x_{13}-x_{23}\leq 1-x_1,
	\]
	that is $x_3-x_{13}\leq 1-x_1$.
	Moreover, 
	still from  (\ref{EQ:INEQPI}), it holds that
	$x_{13}+x_{23}-x_{3}\leq x_{23}$, that is $x_{3}-x_{13}\geq 0$. Thus $x_1\leq 1$.
	Then,	as (\ref{EQ:SYSTEMPISTATEMENT}) and (\ref{EQ:INEQPI}) are equivalent, the set $\Pi$ in Theorem \ref{THM:PIFOR3} is the set of points $(x_1,x_2,x_3,x_{12},x_{13},x_{23},x_{123})$ 
	which satisfy (\ref{EQ:INEQPI}).
\end{remark}
Then, by   Theorem \ref{THM:PIFOR3} it follows \cite[Corollary 1]{GiSa19}
\begin{corollary}\label{COR:PIFOR3}
	For any coherent assessment  $(x_1,x_2,x_3,x_{12},x_{13},x_{23})$ on $\{\C_{1},\C_{2},\C_{3}, \C_{12}, \C_{13}, \C_{23}\}$
	the extension $x_{123}$ on $\C_{123}$ is coherent if and only if $x_{123}\in[x_{123}',x_{123}'']$, where
	\begin{equation}\label{EQ:INECOR}
	\begin{array}{ll}
	x_{123}'=\max\{0,x_{12}+x_{13}-x_1,x_{12}+x_{23}-x_2,x_{13}+x_{23}-x_3\},\\
	x_{123}''= \min\{x_{12},x_{13},x_{23},1-x_1-x_2-x_3+x_{12}+x_{13}+x_{23}\}.
	\end{array}
	\end{equation}
\end{corollary}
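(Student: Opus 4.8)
The plan is to read the desired extension interval directly off the characterization of the full coherent region $\Pi$ furnished by Theorem \ref{THM:PIFOR3}, so that no new linear system has to be solved from scratch. First I would fix a coherent assessment $(x_1,x_2,x_3,x_{12},x_{13},x_{23})$ on the six-element subfamily $\{\C_{1},\C_{2},\C_{3},\C_{12},\C_{13},\C_{23}\}$ and invoke the fundamental extension theorem (Theorem \ref{THM:FUND}): since this assessment is coherent and all the conditional random quantities involved are bounded, the set of values $x_{123}$ for which the extended assessment $(x_1,x_2,x_3,x_{12},x_{13},x_{23},x_{123})$ on $\F$ is coherent is a closed interval $[\mu',\mu'']$, which is in particular non-empty.

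Next I would identify this interval with $[x_{123}',x_{123}'']$. By definition, $x_{123}$ is a coherent extension if and only if the seven-tuple $(x_1,x_2,x_3,x_{12},x_{13},x_{23},x_{123})$ belongs to $\Pi$. By Theorem \ref{THM:PIFOR3} together with Remark \ref{REM:INEQPI}, membership in $\Pi$ is equivalent to the single double inequality (\ref{EQ:INEQPI}); that is, the seven-tuple is coherent precisely when
\[
x_{123}' \leq x_{123} \leq x_{123}'',
\]
with $x_{123}'$ and $x_{123}''$ given by (\ref{EQ:INECOR}). Hence the set of coherent extensions is exactly $\{x_{123}: x_{123}' \leq x_{123} \leq x_{123}''\}$, which yields $[\mu',\mu''] = [x_{123}',x_{123}'']$ and establishes both the ``if'' and the ``only if'' directions simultaneously.

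The one point that still requires a short argument is that the interval is genuinely non-empty, i.e. $x_{123}' \leq x_{123}''$; this is where the hypothesis of coherence of the six-element sub-assessment is used. Since that sub-assessment is coherent, Theorem \ref{THM:FUND} guarantees at least one coherent extension $\bar{x}_{123}$; applying the equivalence just established to $\bar{x}_{123}$ gives $x_{123}' \leq \bar{x}_{123} \leq x_{123}''$, whence $x_{123}' \leq x_{123}''$. I do not expect any genuine obstacle here, because all the analytic work---the sharpness of the bounds and the verification in Remark \ref{REM:INEQPI} that (\ref{EQ:INEQPI}) implies every remaining inequality of (\ref{EQ:SYSTEMPISTATEMENT})---has already been carried out. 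Were Theorem \ref{THM:PIFOR3} not available, the hard part would instead be constructing explicit semipositive solutions of the system $(\Sigma)$ associated with $(\F,\M)$ at the two endpoints and invoking Theorem \ref{CNES-PREV-I_0-INT} to certify their coherence; but given the theorem, the corollary is an immediate restatement of its last two inequalities as an extension interval.
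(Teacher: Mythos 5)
Your proposal is correct and follows essentially the same route as the paper, which presents Corollary \ref{COR:PIFOR3} as an immediate consequence of Theorem \ref{THM:PIFOR3} via Remark \ref{REM:INEQPI}: membership of the seven-tuple in $\Pi$ is equivalent to the double inequality (\ref{EQ:INEQPI}), which is exactly the interval condition $x_{123}'\leq x_{123}\leq x_{123}''$. Your additional check that the interval is non-empty, using Theorem \ref{THM:FUND} applied to the coherent six-component sub-assessment, is a sound (if implicit in the paper) finishing touch.
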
	
We recall that in case of logical dependencies, the set of all coherent assessments may be a strict subset of the set $\Pi$  associated with  the  case of logical independence. 
However, the next result shows that  the set  of coherent assessments is still $\Pi$ in the case where $H_1=H_2=H_3=H$ (with possibly $H=\Omega$,   see also \cite[p. 232]{Joe97}).
\begin{theorem}\label{THM:PIFOR3bis}
	Let be given any logically independent events $E_1, E_2, E_3,H$, with $H\neq \emptyset$.
	Then, the set $\Pi$ of all coherent assessments  $\mathcal{M}=(x_1,x_2,x_3,x_{12},x_{13},x_{23},x_{123})$ on
	$\F=\{\C_{1},\C_{2},\C_{3}, \C_{12}, \C_{13}, \C_{23}, \C_{123}\}$ is the set of points $(x_1,x_2,x_3,x_{12},x_{13},x_{23},x_{123})$ 
	which satisfy the  conditions in formula (\ref{EQ:SYSTEMPISTATEMENT}).
\end{theorem}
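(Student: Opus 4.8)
The plan is to reduce coherence on $\F$, in the case $H_1=H_2=H_3=H$, to the solvability of a single linear system whose nonnegativity constraints reproduce exactly the inequalities of (\ref{EQ:SYSTEMPISTATEMENT}). First I would exploit that the three conditioning events coincide: substituting $H=K$ into (\ref{EQ:CONJUNCTION}) kills the terms in $\no{H}$ (they contain the impossible event $\no{H}H$) and leaves $(A|H)\wedge(B|H)=AB|H$. Hence $\C_{ij}=E_iE_j|H$ and $\C_{123}=E_1E_2E_3|H$, so $\F$ is a family of seven genuine conditional events, all conditioned on the single event $H$, and $x_i,x_{ij},x_{123}$ are the conditional probabilities $P(E_i|H)$, $P(E_iE_j|H)$, $P(E_1E_2E_3|H)$.

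Next I would list the constituents. Since $E_1,E_2,E_3$ are logically independent and independent of $H$, the constituents contained in $H$ are the eight atoms $K_{e_1e_2e_3}=E_1^{e_1}E_2^{e_2}E_3^{e_3}H$, with $(e_1,e_2,e_3)\in\{0,1\}^3$ (writing $E_i^{1}=E_i$ and $E_i^{0}=\no{E}_i$), while the only further constituent is $C_0=\no{H}$. By Remark \ref{REM:POINTQ}, the point associated with $K_{e_1e_2e_3}$ is the binary vector $(e_1,e_2,e_3,e_1e_2,e_1e_3,e_2e_3,e_1e_2e_3)$, and the point associated with $C_0$ is $\mathcal{M}$. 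The system $(\Sigma)$ then asks for $\mathcal{M}=\sum_{(e_1,e_2,e_3)}\lambda_{e_1e_2e_3}Q_{e_1e_2e_3}$ with $\lambda\geq 0$ and $\sum\lambda=1$.

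The key computation is that the equations of $(\Sigma)$ admit a unique candidate solution, obtained by inclusion--exclusion: the last equation gives $\lambda_{111}=x_{123}$, then $\lambda_{110}=x_{12}-x_{123}$ (and symmetrically $\lambda_{101},\lambda_{011}$), then $\lambda_{100}=x_1-x_{12}-x_{13}+x_{123}$ (and symmetrically $\lambda_{010},\lambda_{001}$), and finally $\lambda_{000}=1-x_1-x_2-x_3+x_{12}+x_{13}+x_{23}-x_{123}$. Imposing nonnegativity on these eight quantities yields precisely the two-sided chain (\ref{EQ:INEQPI}), which by Remark \ref{REM:INEQPI} is equivalent to (\ref{EQ:SYSTEMPISTATEMENT}). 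Thus $\mathcal{M}\in\I$, i.e.\ $(\Sigma)$ is solvable, if and only if (\ref{EQ:SYSTEMPISTATEMENT}) holds. I expect this Möbius-type inversion, together with the bookkeeping that matches its eight nonnegativity conditions to (\ref{EQ:INEQPI}), to be the main technical step, although it is entirely elementary.

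Finally I would upgrade solvability of $(\Sigma)$ to full coherence via Theorem \ref{CNES-PREV-I_0-INT}. Here the common conditioning event is decisive: since every $H_i$ equals $H$ and all eight constituents lie in $H$, for any solution $\Lambda$ one has $\sum_{h:C_h\subseteq H_i}\lambda_h=\sum_h\lambda_h=1$, so each maximum $M_i$ in (\ref{EQ:I0}) equals $1>0$ and therefore $I_0=\emptyset$. Consequently condition $(ii)$ of Theorem \ref{CNES-PREV-I_0-INT} is vacuous, and coherence of $\mathcal{M}$ is equivalent to solvability of $(\Sigma)$ alone. Chaining the two equivalences shows that $\Pi$ is exactly the set of points satisfying (\ref{EQ:SYSTEMPISTATEMENT}), as claimed; in particular this confirms that, although $H_1=H_2=H_3=H$ is a genuine logical dependence, the coherent region does not shrink below the logically independent one of Theorem \ref{THM:PIFOR3}.
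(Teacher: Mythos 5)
Your proof is correct, but there is no internal proof to compare it against: the paper states Theorem \ref{THM:PIFOR3bis} without proof, offering only pointers to the literature (the unconditional case $H=\Omega$ in Joe's book and a copula-based analogue). What you have written therefore fills a genuine gap, and it does so with exactly the machinery the paper uses elsewhere. Each of your three steps checks out. First, when $H_1=H_2=H_3=H$, every term $x_S(\bigwedge_{i\in S} \no{H}_i)\wedge(\bigwedge_{i\notin S} E_i{H}_i)$ with $\emptyset\neq S\subset\{1,2,3\}$ in Definition \ref{DEF:CONGn} contains the impossible event $\no{H}H$, so $\C_{ij}=E_iE_j|H$ and $\C_{123}=E_1E_2E_3|H$, and $\F$ collapses to seven conditional events with the single conditioning event $H$ (the paper itself records the $n=2$ instance of this reduction in Remark \ref{REM:TETRAHEDRON}). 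Second, by logical independence of $E_1,E_2,E_3,H$ the constituents contained in $H$ are the eight atoms, the associated points are the binary vectors $(e_1,e_2,e_3,e_1e_2,e_1e_3,e_2e_3,e_1e_2e_3)$, and your M\"obius inversion is exact: each of the eight equations of $(\Sigma)$ (seven prevision equations plus normalization) is used once to isolate one unknown, so the solution $\lambda_{111}=x_{123}$, $\lambda_{110}=x_{12}-x_{123}$, $\lambda_{100}=x_1-x_{12}-x_{13}+x_{123}$, \ldots, $\lambda_{000}=1-x_1-x_2-x_3+x_{12}+x_{13}+x_{23}-x_{123}$ is unique and automatically consistent, whence $\M\in\I$ if and only if these eight quantities are nonnegative, which is literally (\ref{EQ:INEQPI}) and hence, by Remark \ref{REM:INEQPI}, equivalent to (\ref{EQ:SYSTEMPISTATEMENT}). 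Third, since every constituent $C_h$ with $h\geq 1$ is contained in $H$ and every conditioning event equals $H$, any solution satisfies $\sum_{h:C_h\subseteq H}\lambda_h=1$, so $I_0=\emptyset$ and Theorem \ref{CNES-PREV-I_0-INT} upgrades solvability of $(\Sigma)$ to coherence; this is the same device the paper uses in the proof of Theorem \ref{THM:SIGMASTARn}. The only refinement worth spelling out in a final write-up is the uniqueness claim itself (your ``unique candidate solution''): it deserves the one-line justification that the elimination order $x_{123}\to x_{ij}\to x_i\to$ normalization determines each $\lambda$ exactly once with no leftover equations, since this is what makes your equivalence an if-and-only-if rather than mere sufficiency.
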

A corollary similar to Corollary \ref{COR:PIFOR3} could be associated to Theorem \ref{THM:PIFOR3bis}.
For a similar result  based on copulas see \cite{Dura08}.

In the next subsection we examine the coherence of the prevision assessment  $\mathcal{M}=(x_1,x_2,x_3,T_{\lambda}(x_1,x_2),T_{\lambda}(x_1,x_3),T_{\lambda}(x_2,x_3),T_{\lambda}(x_1,x_2,x_3))$
 on
$\F=\{\C_{1},\C_{2},\C_{3}, \C_{12}, \C_{13}, \C_{23}, \C_{123}\}$ in the cases where   $T_{\lambda}$ is the minimum t-norm, or  the product t-norm, or 
the Lukasiewicz  t-norm.
\subsection{On the minimum t-norm}
We recall that  the Frank t-norm $T_0$ is the minimum t-norm $T_M$.
\begin{theorem}\label{THM:MIN}
	Assume that  the events $E_1, E_2, E_3, H_1, H_2, H_3$  are logically independent, with $H_1\neq \emptyset, H_2\neq \emptyset, H_3\neq \emptyset$.
	The  assessment  $\mathcal{M}=(x_1,x_2,x_3,T_{M}(x_1,x_2),T_{M}(x_1,x_3),T_{M}(x_2,x_3),T_{M}(x_1,x_2,x_3))$ on
	$\F=\{\C_{1},\C_{2},\C_{3}, \C_{12}, \C_{13}, \C_{23}, \C_{123}\}$, with  $(x_1,x_2,x_3)\in[0,1]^3$,  is coherent. 
	Moreover, when $\mathcal{M}=(x_1,x_2,x_3,T_{M}(x_1,x_2),T_{M}(x_1,x_3),T_{M}(x_2,x_3),T_{M}(x_1,x_2,x_3))$,  it holds that $\C_{ij}=T_{M}(\C_i,\C_j)=\min\{\C_i,\C_j\}$, $i\neq j$, and  $\C_{123}=T_{M}(\C_1,\C_2,\C_3)=\min\{\C_{1},\C_{2},\C_{3}\}$. 
\end{theorem}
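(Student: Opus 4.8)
The plan is to establish the two assertions separately: the coherence of $\mathcal{M}$, and then the pointwise identities $\C_{ij}=\min\{\C_i,\C_j\}$ and $\C_{123}=\min\{\C_1,\C_2,\C_3\}$ as conditional random quantities.

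For coherence I would simply feed the minimum t-norm values into the characterization already available in Theorem \ref{THM:PIFOR3}, using its compressed form from Remark \ref{REM:INEQPI}: $\mathcal{M}\in\Pi$ iff the double inequality (\ref{EQ:INEQPI}) holds. Since conjunction and $T_M$ are invariant under permutations of the indices, by commutativity I may assume $x_1\leq x_2\leq x_3$, whence $x_{12}=x_{13}=x_{123}=x_1$ and $x_{23}=x_2$. Substituting, the lower bound in (\ref{EQ:INEQPI}) equals $\max\{0,x_1,x_1,x_1+x_2-x_3\}=x_1$ (because $x_2\leq x_3$ and $x_1\geq 0$) and the upper bound equals $\min\{x_1,x_1,x_2,1+x_1-x_3\}=x_1$ (because $x_1\leq x_2$ and $x_3\leq 1$). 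Both coincide with $x_{123}=x_1$, so (\ref{EQ:INEQPI}) holds and, by Theorem \ref{THM:PIFOR3}, $\mathcal{M}$ is coherent.

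For the representation I would argue constituent by constituent, exploiting that by (\ref{EQ:AgH}) each factor $\C_i=E_i|H_i$ takes one of the values $1,0,x_i$ and is therefore nonnegative. Reading the value of $\C_{ij}$ off Definition \ref{CONJUNCTION} with $z=x_{ij}=T_M(x_i,x_j)=\min\{x_i,x_j\}$ and partitioning by that value: on $E_iH_iE_jH_j$ both factors are $1$ and the minimum is $1$; on any constituent where some $\no{E}_iH_i$ holds the conjunction is $0$ while the corresponding factor is $0$ and the other is nonnegative, so the minimum is $0$; on a constituent where exactly one conditioning event is void and the remaining conditional is true, the conjunction equals the probability of the void factor, the true factor equals $1$, and their minimum is exactly that probability; finally on $\no{H}_i\no{H}_j$ the conjunction equals $\min\{x_i,x_j\}=\min\{\C_i,\C_j\}$. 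Hence $\C_{ij}=\min\{\C_i,\C_j\}$. Equivalently one may invoke the Fr\'echet-Hoeffding bound (\ref{EQ:DOPPIADIS}) between random quantities, noting that the two sides already agree on every constituent except $\no{H}_i\no{H}_j$, where the gap $\min\{x_i,x_j\}-z$ is closed precisely by the choice $z=T_M(x_i,x_j)$.

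The identity $\C_{123}=\min\{\C_1,\C_2,\C_3\}$ follows the same template, now reading the nine value-cases of (\ref{EQ:CONJUNCTION3}) with $x_{ij}=\min\{x_i,x_j\}$ and $x_{123}=\min\{x_1,x_2,x_3\}$: all $E_iH_i$ true gives $1$ on both sides; any $\no{E}_iH_i$ true forces a zero factor, hence minimum $0$; and each mixed void/true constituent assigns $\C_{123}$ the minimum of the $x_i$ over the void coordinates, which matches $\min\{\C_1,\C_2,\C_3\}$ since the true factors all equal $1\geq x_i$. I expect the only real (if routine) obstacle here to be the bookkeeping: checking that the nine cases of (\ref{EQ:CONJUNCTION3}) genuinely exhaust all $3^3$ constituents and that on each of them the nonnegativity of the factors lets a single $0$ dominate the minimum. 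By contrast, the coherence half is an immediate substitution into the pre-existing characterization.
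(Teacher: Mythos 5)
Your proposal is correct and follows essentially the same route as the paper: coherence is obtained by substituting the minimum t-norm values into the characterization of Remark \ref{REM:INEQPI} under the ordering $x_1\leq x_2\leq x_3$, and the identities $\C_{ij}=\min\{\C_i,\C_j\}$, $\C_{123}=\min\{\C_1,\C_2,\C_3\}$ are checked constituent by constituent. The only cosmetic difference is that for $\C_{ij}$ the paper simply cites Remark \ref{REM:CONG2ANDTNORM} (whose underlying argument, via Theorem \ref{THM:TLAMBDA}, is exactly your case analysis), while you unpack that verification explicitly.
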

\begin{proof}
	From Remark \ref{REM:INEQPI}, the coherence of $\M$ amounts to the inequalities in (\ref{EQ:INEQPI}).
	Without loss of generality, we assume that $0\leq x_1\leq x_2\leq x_3\leq 1$.  Then 
	$x_{12}=T_{M}(x_1,x_2)=x_1$,
	$x_{13}=T_{M}(x_1,x_3)=x_1$,
	$x_{23}=T_{M}(x_2,x_3)=x_2$, and $x_{123}=T_{M}(x_1,x_2,x_3)=x_1$. 
	The inequalities (\ref{EQ:INEQPI}) become
	\begin{equation}\label{EQ:MIN}
	\begin{array}{ll}
	\max\{0,x_{1},x_{1}+x_2-x_3\}=x_1\,\;\leq  \;x_{1}\;\leq x_1=
	\min\{x_{1},x_{2},1-x_3+x_{1}\}.
	\end{array}	\end{equation}
	Thus,  the inequalities are satisfied and hence $\M$ is coherent. 
By Remark \ref{REM:CONG2ANDTNORM}, it holds that $\C_{ij}=T_{M}(\C_i,\C_j)=\min\{\C_i,\C_j\}$, $i\neq j$.	Moreover, based on 
(\ref{EQ:CONJUNCTION3}),  it can be easily verified that
	 $\C_{123}=T_{M}(\C_1,\C_2,\C_3)=\min\{\C_{1},\C_{2},\C_{3}\}$.
\end{proof}

\begin{remark}
	As we can see from 
	$(\ref{EQ:MIN})$ and Corollary \ref{COR:PIFOR3}, the assessment $x_{123}=\min\{x_1,x_2,x_3\}$ is the unique coherent extension on $\C_{123}$ of the assessment $
	(x_1,x_2,x_3,\min\{x_1,x_2\},\min\{x_1,x_3\},\min\{x_2,x_3\})$ on
	$\{\C_{1},\C_{2},\C_{3}, \C_{12}, \C_{13}, \C_{23}\}$. 
\end{remark}
\subsection{On the Product t-norm}
We recall that  the Frank t-norm $T_1$ is the product t-norm $T_P$.
\begin{theorem}\label{THM:PROD}
	Assume that  the events $E_1, E_2, E_3, H_1, H_2, H_3$  are logically independent, with $H_1\neq \emptyset, H_2\neq \emptyset, H_3\neq \emptyset$.
		The  assessment  $\mathcal{M}=(x_1,x_2,x_3,T_{P}(x_1,x_2),T_{P}(x_1,x_3),T_{P}(x_2,x_3),T_{P}(x_1,x_2,x_3))$ on
	$\F=\{\C_{1},\C_{2},\C_{3}, \C_{12}, \C_{13}, \C_{23}, \C_{123}\}$, with  $(x_1,x_2,x_3)\in[0,1]^3$,  is coherent. 
	Moreover, when $\mathcal{M}=(x_1,x_2,x_3,T_{P}(x_1,x_2),T_{P}(x_1,x_3),T_{P}(x_2,x_3),T_{P}(x_1,x_2,x_3))$,  it holds that  $\C_{ij}=T_{P}(\C_i,\C_j)=\C_i\C_j$, $i\neq j$, and  $\C_{123}=T_{P}(\C_1,\C_2,\C_3)=\C_{1}\C_{2}\C_{3}$. 
\end{theorem}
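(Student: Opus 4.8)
The plan is to follow the two-part structure of the proof of Theorem \ref{THM:MIN}: first establish coherence of $\M$, then establish the representation of the conjunctions as products. For coherence, by Remark \ref{REM:INEQPI} it suffices to verify the inequalities (\ref{EQ:INEQPI}) after substituting the product values $x_{ij}=T_P(x_i,x_j)=x_ix_j$ and $x_{123}=T_P(x_1,x_2,x_3)=x_1x_2x_3$. I would first dispatch the lower bounds: the inequality $x_{123}\geq 0$ is immediate, while $x_{123}\geq x_{12}+x_{13}-x_1$ becomes $x_1x_2x_3\geq x_1(x_2+x_3-1)$, i.e.\ $x_1(1-x_2)(1-x_3)\geq 0$, which holds because every factor lies in $[0,1]$; the other two lower bounds give $x_2(1-x_1)(1-x_3)\geq 0$ and $x_3(1-x_1)(1-x_2)\geq 0$ by symmetry.

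Next I would treat the upper bounds. The three inequalities $x_{123}\leq x_{ij}$ reduce to $x_1x_2x_3\leq x_ix_j$, i.e.\ to $x_ix_j(1-x_k)\geq 0$, which is clear. The crucial inequality is $x_{123}\leq 1-x_1-x_2-x_3+x_{12}+x_{13}+x_{23}$; substituting the product values and rearranging, this is precisely $(1-x_1)(1-x_2)(1-x_3)\geq 0$, valid for every $(x_1,x_2,x_3)\in[0,1]^3$. Recognizing this inclusion--exclusion factorization is the only mildly delicate point of the coherence part; once it is in place, all inequalities in (\ref{EQ:INEQPI}) hold and, by Theorem \ref{THM:PIFOR3}, $\M$ is coherent.

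For the representation of the pairwise conjunctions I would invoke Remark \ref{REM:CONG2ANDTNORM}: since the logical independence of $E_1,E_2,E_3,H_1,H_2,H_3$ makes the extension $z=T_1(x_i,x_j)=x_ix_j$ coherent on each subfamily $\{\C_i,\C_j,\C_{ij}\}$, it follows that $\C_{ij}=T_P(\C_i,\C_j)=\C_i\C_j$ for $i\neq j$. For the triple conjunction I would verify $\C_{123}=\C_1\C_2\C_3$ directly from the nine cases of (\ref{EQ:CONJUNCTION3}), using that each $\C_i=E_i|H_i$ takes the value $1$, $0$, or $x_i$ according to whether $E_iH_i$, $\no{E}_iH_i$, or $\no{H}_i$ is true. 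Thus on $E_1H_1E_2H_2E_3H_3$ the product equals $1$; whenever some $\no{E}_iH_i$ holds the corresponding factor is $0$ and the product vanishes; and on each ``void'' constituent the product collapses to the prescribed value, e.g.\ on $\no{H}_1\no{H}_2E_3H_3$ one obtains $x_1x_2\cdot 1=x_{12}$, while on $\no{H}_1\no{H}_2\no{H}_3$ one obtains $x_1x_2x_3=x_{123}$. This case check, though entirely routine, is where the multiplicativity of $T_P$ is used essentially, and it is the one step of the argument that does not reduce to an inequality; I expect it to require the most care to present cleanly.
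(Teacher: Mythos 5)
Your proposal is correct and follows essentially the same route as the paper's proof: reduce coherence to the inequalities in (\ref{EQ:INEQPI}) via Remark \ref{REM:INEQPI}, substitute $x_{ij}=x_ix_j$ and $x_{123}=x_1x_2x_3$, verify the bounds (your factorizations $x_k(1-x_i)(1-x_j)\geq 0$ and $(1-x_1)(1-x_2)(1-x_3)\geq 0$ are exactly what the paper uses, the latter already appearing there in the factored form of the min), then invoke Remark \ref{REM:CONG2ANDTNORM} for the pairwise conjunctions and check (\ref{EQ:CONJUNCTION3}) case by case for $\C_{123}=\C_1\C_2\C_3$. The only difference is that you spell out the case check that the paper dismisses as "easily verified," which is a matter of exposition, not of substance.
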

\begin{proof}
	From Remark \ref{REM:INEQPI}, the coherence of $\M$ amounts to the inequalities in (\ref{EQ:INEQPI}).
	As $x_{ij}=T_{P}(x_i,x_j)=x_ix_j$, $i\neq j$,  and $x_{123}=T_{P}(x_1,x_2,x_3)=x_1x_2x_3$, 
	the inequalities (\ref{EQ:INEQPI}) become
	\begin{equation}\label{EQ:INEQPITPROD}
	\begin{array}{ll}
	\max\{0,x_1(x_2+x_3-1),x_{2}(x_1+x_3-1),x_3(x_1+x_2-1)\}\,\;\leq  \;x_{1}x_2x_3\;\leq \\
	\leq\;\; 
	\min\{x_{1}x_2,x_{1}x_3,x_{2}x_3,(1-x_1)(1-x_2)(1-x_3)+x_1x_2x_3\}.
	\end{array}
	\end{equation}
	As  $(x_1,x_2,x_3)\in[0,1]^3$ it holds that $x_i+x_j-1\leq x_ix_j$  because
	$x_i(1-x_j)\leq 1-x_j$. Then, the first inequality in (\ref{EQ:INEQPITPROD}) is satisfied. Moreover, the  second inequality is trivial. Thus, $\M$ is coherent.
By Remark \ref{REM:CONG2ANDTNORM}, it holds that $\C_{ij}=T_{P}(\C_i,\C_j)=\min\{\C_i,\C_j\}$, $i\neq j$.	Finally, based on 
(\ref{EQ:CONJUNCTION3}),  it can be easily verified that
 $\C_{123}=T_{P}(\C_1,\C_2,\C_3)=\C_{1}\C_{2}\C_{3}$.\ 
\end{proof}

\subsection{On Lukasiewicz t-norm}
We recall that  the Frank t-norm $T_{+\infty}$ is the Lukasiewicz t-norm $T_L$.
We show that 
the  assessment  $\mathcal{M}=(x_1,x_2,x_3,T_{L}(x_1,x_2),T_{L}(x_1,x_3),T_{L}(x_2,x_3),T_{L}(x_1,x_2,x_3))$ on
$\F=\{\C_{1},\C_{2},\C_{3}, \C_{12}, \C_{13}, \C_{23}, \C_{123}\}$ may be not coherent for some $(x_1,x_2,x_3)\in[0,1]^3$, as shown in the example below.
\begin{example}\label{EX:TLNOCOHER}
	Given any logically independent events  $E_1, E_2, E_3, H_1, H_2, H_3$, 
the assessment  $(x_1,x_2,x_3)=(0.5,0.6,0.7)$  on $\{\C_1,\C_2,\C_3\}$ is coherent. However,
	the prevision assessment
	$(x_1,x_2,x_3,T_L(x_1,x_2),T_L(x_1,x_3),T_L(x_2,x_3)$, $T_L(x_1,x_2,x_3))=(0.5,0.6,0.7,0.1,0.2,0.3,0)$
	on the family $\F=\{\C_{1},\C_{2},\C_{3}, \C_{12}, \C_{13}, \C_{23}, \C_{123}\}$ is not coherent.
	Indeed, formula (\ref{EQ:INEQPI}) becomes
\[	
	\max\{0, 0.1+0.2-0.5,0.1+0.3-0.6,0.2+0.3-0.7\}\leq  0
	\leq
	\min\{0.1,0.2,0.3,1-0.5-0.6-0.7+0.1+0.2+0.3\},
\]
	that is: 
\[
	\max\{0, -0.2\}=0\,\;\leq  \;0\;\leq 
	-0.2=\min\{0.1,0.2,0.3,-0.2\};
\]	
	thus,  the inequalities   in (\ref{EQ:INEQPI})  are not satisfied and by  Remark \ref{REM:INEQPI} the assessment $(0.5,0.6,0.7,0.1,0.2,0.3,0)$ is not coherent. Then, the results of Theorems  \ref{THM:MIN} and \ref{THM:PROD}
	do not hold for the Lukasiewicz t-norm.
\end{example}
In the next result we illustrate further details on coherence of the prevision assessment
	$(x_1,x_2,x_3,T_L(x_1,x_2),T_L(x_1,x_3),T_L(x_2,x_3),T_L(x_1,x_2,x_3))$.
\begin{theorem}\label{THM:LUKNEW}
		Assume that  the events $E_1, E_2, E_3, H_1, H_2, H_3$  are logically independent, with $H_1\neq \emptyset, H_2\neq \emptyset, H_3\neq \emptyset$.
Let 
	$\M=(x_1,x_2,x_3,T_L(x_1,x_2),T_L(x_1,x_3),T_L(x_2,x_3),T_L(x_1,x_2,x_3))$ be a prevision assessment 
	on the family  $\F=\{\C_{1},\C_{2},\C_{3}, \C_{12}, \C_{13}, \C_{23},\C_{123}\}$. If
	 $(x_1,x_2,x_3)\in[0,1]^3$ and
	$x_1+x_2+x_3-2\geq 0$ then $\M$ is coherent. 
If
$(x_1,x_2,x_3)\in[0,1]^3$, $x_1+x_2-1>0$, $x_1+x_3-1>0$, $x_2+x_3-1>0$, and
$x_1+x_2+x_3-2< 0$, then $\M$ is  not coherent.	
\end{theorem}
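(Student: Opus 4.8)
The plan is to reduce both claims to the explicit inequalities (\ref{EQ:INEQPI}) furnished by Theorem \ref{THM:PIFOR3} and Remark \ref{REM:INEQPI}, which under logical independence characterize coherence of any assessment on $\F$ by
\[
\max\{0,x_{12}+x_{13}-x_1,x_{12}+x_{23}-x_2,x_{13}+x_{23}-x_3\}\leq x_{123}\leq\min\{x_{12},x_{13},x_{23},1-x_1-x_2-x_3+x_{12}+x_{13}+x_{23}\}.
\]
The only preliminary work is to decide, in each regime, which branch of the Lukasiewicz t-norm each entry of $\M$ falls into; once the outer maxima collapse, the verification is purely algebraic.

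For the coherence claim I would first note that $x_1+x_2+x_3\geq 2$ forces every pairwise sum into the positive branch: since $x_i+x_j=(x_1+x_2+x_3)-x_k\geq 2-1=1$, we get $x_{ij}=T_L(x_i,x_j)=x_i+x_j-1\geq 0$ for each pair, and likewise $x_{123}=T_L(x_1,x_2,x_3)=x_1+x_2+x_3-2\geq 0$. Substituting these values, each of the three nontrivial lower-bound terms collapses to the same quantity, e.g.\ $x_{12}+x_{13}-x_1=x_1+x_2+x_3-2=x_{123}$, so the lower bound in (\ref{EQ:INEQPI}) holds with equality. For the upper bound, $x_{ij}-x_{123}=1-x_k\geq 0$ disposes of the first three terms, while the compound term satisfies $1-x_1-x_2-x_3+x_{12}+x_{13}+x_{23}=x_1+x_2+x_3-2=x_{123}$, again with equality. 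Thus (\ref{EQ:INEQPI}) is satisfied and, by Remark \ref{REM:INEQPI}, $\M$ is coherent.

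For the incoherence claim the hypotheses $x_i+x_j-1>0$ place the three pairwise entries in the positive branch, $x_{ij}=x_i+x_j-1$, while $x_1+x_2+x_3-2<0$ puts the triple entry in the zero branch, $x_{123}=0$. The decisive computation is that of the compound upper-bound term, which evaluates to $1-x_1-x_2-x_3+x_{12}+x_{13}+x_{23}=x_1+x_2+x_3-2<0$. Since this is strictly smaller than $x_{123}=0$, the upper inequality in (\ref{EQ:INEQPI}) is violated, and by Remark \ref{REM:INEQPI} the assessment $\M$ is not coherent.

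I do not expect a genuine obstacle here: once Theorem \ref{THM:PIFOR3} is invoked, everything reduces to a one-line branch analysis of $T_L$ followed by elementary algebra. The only point that demands care is the bookkeeping of the $\max\{\cdot,0\}$ operations defining $T_L$ and $T_M$, in particular checking that $x_1+x_2+x_3\geq 2$ really does force all three pairwise sums to reach $1$, so that the inequalities in (\ref{EQ:INEQPI}) may be rewritten without their outer maxima before they are compared.
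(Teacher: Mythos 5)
Your proposal is correct and follows essentially the same route as the paper's proof: both reduce the claim to the inequalities (\ref{EQ:INEQPI}) via Theorem \ref{THM:PIFOR3} and Remark \ref{REM:INEQPI}, observe that $x_1+x_2+x_3\geq 2$ forces all pairwise entries (and the triple entry) into the positive branch of $T_L$ so that the bounds collapse to equalities, and in the incoherence case note that $x_{123}=0$ exceeds the compound upper-bound term $1-x_1-x_2-x_3+x_{12}+x_{13}+x_{23}=x_1+x_2+x_3-2<0$. The only inessential difference is that the paper also describes the region $x_1+x_2+x_3\geq 2$ geometrically as a tetrahedron, which your argument does not need.
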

\begin{proof}
	We observe that the set of points $(x_1,x_2,x_3)\in[0,1]^3$ 
	such that $x_1+x_2+x_3-2\geq 0$ 
	is the convex hull $\mathcal{T}$ of the points
 $(1,1,0),(1,0,1),(0,1,1),(1,1,1)$, which 	is a  tetrahedron.
If 	$(x_1,x_2,x_3)\in \mathcal{T}$, then   $x_1+x_2+x_3-2\geq 0$, and it holds that 
\[
x_1+x_2-1\geq  0,\;\;
x_1+x_3-1\geq  0,\;\;
x_2+x_3-1\geq  0,
\]
with 
\begin{equation}\label{EQ:TLUKINEQ}
0\leq x_1+x_2+x_3-2\leq \min\{x_1+x_2-1,x_1+x_3-1,x_2+x_3-1\}.
\end{equation}
Thus, the assessment  becomes $\M=(x_1,x_2,x_3,x_1+x_2-1,x_1+x_3-1,x_2+x_3-1,x_1+x_2+x_3-2)$. Moreover, from (\ref{EQ:TLUKINEQ}), the conditions of coherence  on $\M$ given in  (\ref{EQ:INEQPI}) become
\[
\max\{ 0,x_1+x_2+x_3-2\}\leq 
x_1+x_2+x_3-2\leq \min\{x_1+x_2-1,x_1+x_3-1,x_2+x_3-1,x_1+x_2+x_3-2\},
\]
that is 
\[
\begin{array}{ll}
x_1+x_2+x_3-2\,\;\leq  \;x_1+x_2+x_3-2\;
\leq\;\; 
x_1+x_2+x_3-2,
\end{array}
\]
which are trivially  satisfied. Then, 
by Remark \ref{REM:INEQPI},  $\M$ 
is coherent.

If
$(x_1,x_2,x_3)\in[0,1]^3$, $x_1+x_2-1>0$, $x_1+x_3-1>0$, $x_2+x_3-1>0$, and
$x_1+x_2+x_3-2< 0$, then
\[
\begin{array}{ll}
1-x_1-x_2-x_3+x_{12}+x_{13}+x_{23}=1-x_1-x_2-x_3+T_{L}(x_1,x_2)+T_{L}(x_1,x_3)+T_{L}(x_2,x_3);
\end{array}
\]
moreover
\[
1-x_1-x_2-x_3+T_{L}(x_1,x_2)+T_{L}(x_1,x_3)+T_{L}(x_2,x_3)=x_{1}+x_2+x_3-2<0.
\]
Then  the inequality 
$1-x_1-x_2-x_3+x_{12}+x_{13}+x_{23}\geq 0$
in 
(\ref{EQ:INEQPI})
is not satisfied.
Therefore, by Remark \ref{REM:INEQPI},  $\M$ is not coherent. This is the case, for instance, in  Example \ref{EX:TLNOCOHER}.
\end{proof}

\begin{remark}
Notice that,  if we consider the assessment $(x_1,x_2,x_3,T_L(x_1,x_2),T_L(x_1,x_3),T_L(x_2,x_3),x_{123})$
on the family  $\{\C_{1},\C_{2},\C_{3}, \C_{12}, \C_{13}, \C_{23},\C_{123}\}$, under the condition
 $x_1+x_2+x_3-2\geq 0$,
 the conditions of coherence   given in  (\ref{EQ:INEQPI}) become
 \[
 \max\{ 0,x_1+x_2+x_3-2\}\leq 
x_{123}\leq \min\{x_1+x_2-1,x_1+x_3-1,x_2+x_3-1,x_1+x_2+x_3-2\},
 \] 
 that is 
 the conditions of coherence  on $\M$ given in  (\ref{EQ:INEQPI}) become
 \[
x_1+x_2+x_3-2\leq x_{123}\leq x_1+x_2+x_3-2.
 \]
 Thus, the unique coherent extension on $\C_{123}$ is $x_{123}=x_1+x_2+x_3-2=T_L(x_1,x_2,x_3)$.
 In this case, 	 it holds that  $\C_{ij}=T_{L}(\C_i,\C_j)=\C_i+\C_j-1$, $i\neq j$, and  $\C_{123}=T_{L}(\C_1,\C_2,\C_3)=\C_{1}+\C_{2}+\C_{3}-2$. 
\end{remark}
Finally, we point out again that when $T_{\lambda}$ is the Lukasiewicz t-norm $T_L=T_{+\infty}$,  it may happen that the assessment  $\mathcal{M}=(x_1,x_2,x_3,T_L(x_1,x_2),T_L(x_1,x_3),T_L(x_2,x_3),T_L(x_1,x_2,x_3))$ is not coherent, that is for some values  $x_1$, $x_2$, and $x_3$, the assessment $\M$, with $x_{12}=T_{L}(x_1,x_2)$, $x_{13}=T_{L}(x_1,x_3)$, $x_{23}=T_{L}(x_2,x_3)$,  and 
$x_{123}=T_{L}(x_1,x_2,x_3)$, is not coherent. Then,  to assign conditional previsions by means of Lukasiewicz t-norm may be inconsistent.  In Theorem \ref{THM:LUKNEW} we gave some sufficient conditions  for coherence/incoherence of  $\M$ when using $T_L$.
\section{Conclusions}
\label{SEC:CONCLUSIONS}
In this paper we studied conjoined and disjoined conditionals, Frank t-norms and t-conorms, and the sharpness of Fr\'echet-Hoeffding bounds.
By studying the solvability of  suitable linear systems, we  showed that, under logical independence, the  Fr\'echet-Hoeffding bounds for  the prevision of the conjunction and the disjunction of $n$ conditional events  are sharp. 
In particular we illustrated some details in the case $n=3$.  We 
gave a geometrical characterization of the set $\Pi$ of all coherent prevision assessments on $\{E_1|H_1,\ldots, E_n|H_n, \C_{1\cdots n}\}$, by verifying that $\Pi$  is convex.  We discussed  the case where  previsions of  conjunctions are assessed by  Lukasiewicz t-norms and we found explicit solutions for the relevant linear systems; then, we analyzed a selected example.  We studied the representation of the prevision of $\C_{1\cdots n}$ and  $\D_{1\cdots n}$ by  a  Frank t-norm $T_{\lambda}$ and a Frank t-conorm $S_{\lambda}$, respectively. Then,  we characterized the sets of coherent prevision assessments on  $\{E_1|H_1,\ldots, E_n|H_n, \C_{1\cdots n}\}$ and on $\{E_1|H_1,\ldots, E_n|H_n, \D_{1\cdots n}\}$ by using $T_{\lambda}$ and  $S_{\lambda}$. We showed that,  under logical independence,  $T_{\lambda}(A|H,B|K)$ is a conjunction $(A|H)\wedge (B|K)$ and $S_{\lambda}(A|H,B|K)$  is a disjunction $(A|H)\vee (B|K)$, for every $\lambda\in[0,+\infty]$. 
We also examined  the case of logical dependence where $A=B$, by obtaining the set of coherent assessments on ${A|H,A|K,(A|H)\wedge (A|K)}$ and its representation in terms of $T_{\lambda}$, with $\lambda\in[0,1]$.
We obtained some particular results  on Frank t-norms and coherence of prevision assessments on the family
$\F=\{\C_1,\C_2,\C_3,\C_{12},\C_{13},\C_{23},\C_{123}\}$.  In particular, 
we verified  that, under logical independence,  
the  assessment  
$
\mathcal{M}=(x_1,x_2,x_3,T_{\lambda}(x_1,x_2),T_{\lambda}(x_1,x_3),T_{\lambda}(x_2,x_3),T_{\lambda}(x_1,x_2,x_3))
$ on
$\F$ is coherent for every $(x_1,x_2,x_3)\in[0,1]^3$ when $T_{\lambda}$ is the minimum t-norm $T_M$,  or the product t-norm $T_P$. We showed that in these cases the conjunction $\C_{123}$ coincides with $T_{M}(\C_1,\C_2,\C_3)$,  or $T_{P}(\C_1,\C_2,\C_3)$, respectively.
Based on a counterexample, we verified that, when $T_{\lambda}$ is the Lukasiewicz t-norm $T_L$,   the coherence of $\mathcal{M}$ is not assured.  Then, we remarked that the Lukasiewicz t-norm of three conditional events may not be a  conjunction. Finally, we gave two sufficient conditions  for coherence and incoherence of  $\M$, respectively,  when using the Lukasiewicz t-norm.   Future work could concern  possible applications to fuzzy logic in the setting of coherence (see, e.g.,  \cite{Tasso12,coletti04,coletti2006}) by interpreting  multidimensional membership functions as  previsions of conjunctions of conditional events.
\section*{Declaration of competing interest}
We wish to confirm that there are no known conflicts of interest associated with this publication and there has been no significant financial support for this work that could have influenced its outcome.

 \section*{Acknowledgments}
 \noindent
 We thank the  anonymous reviewers for their comments and suggestions which were very helpful in improving this paper.
Giuseppe Sanfilippo has been partially supported by the INdAM–GNAMPA Project
(2020 Grant U-UFMBAZ-2020-000819).
\bibliographystyle{model2-names} 
\bibliography{ijar21}
\nocite{MUNDICI2021}
\end{document}